\newtheorem{thm}{Theorem}[section]
\newtheorem*{thm*}{Theorem}
\newtheorem{lem}[thm]{Lemma}
\newtheorem{fact}[thm]{Fact}
\newtheorem{prop}[thm]{Proposition}
\newtheorem*{prop*}{Proposition}
\newtheorem{cor}[thm]{Corollary}
\newtheorem*{cor*}{Corollary}
\theoremstyle{definition}
\newtheorem{defn}[thm]{Definition}
\newtheorem*{defn*}{Definition}
\newtheorem{remark}[thm]{Remark}
\newtheorem{question}[thm]{Question}
\newtheorem*{question*}{Question}
\newtheorem*{Pquestion*}{Popa's question}
\newtheorem*{conv*}{Convention}
\def\bb{\mathbb}
\def\bb{\mathbb}
\def\cal{\mathcal}
\def\u{\mathsf 1}
\newcommand{\cstar}{$\mathrm{C}^*$}
\def\dotminussym#1#2{%
  \setbox0=\hbox{$\m@th#1-$}%
  \kern.5\wd0%
  \hbox to 0pt{\hss\hbox{$\m@th#1-$}\hss}%
  \raise.6\ht0\hbox to 0pt{\hss$\m@th#1.$\hss}%
  \kern.5\wd0}
\newcommand{\dotminus}{\mathbin{\mathpalette\dotminussym{}}}
\def \Th{\operatorname{Th}}
\def \R{\mathcal R}
\def \u{\mathcal U}
\def \val{\operatorname{val}}
\def \mre{\operatorname{MIP}^*=\operatorname{RE}}
\def \sub{\operatorname{Sub}}
\def \irs{\operatorname{IRS}}
\def \sof{\operatorname{sof}}
\def \erg{\operatorname{erg}}
\def\l@subsection{\@tocline{2}{0pt}{2.5pc}{5pc}{}}
\def\l@subsubsection{\@tocline{2}{0pt}{5pc}{7.5pc}{}}
\begin{document}

%%%%%%%%%%%%%%%%%%%%%%%%%%%%%%%%%%%%%%%%%%%%%%

\title[Incompleteness in operator algebras]{Undecidability and incompleteness in quantum information theory and operator algebras}

\author{Isaac Goldbring}
\address{Department of Mathematics\\University of California, Irvine, 340 Rowland Hall (Bldg.\# 400),
Irvine, CA 92697-3875}
\email{isaac@math.uci.edu}
\urladdr{http://www.math.uci.edu/~isaac}
\thanks{Goldbring was partially supported by NSF grant DMS-2054477.}

\begin{abstract}
We survey a number of incompleteness results in operator algebras stemming from the recent undecidability result in quantum complexity theory known as $\mre$, the most prominent of which is the G\"odelian refutation of the Connes Embedding Problem.  We also discuss the very recent use of $\mre$ in refuting the Aldous-Lyons conjecture in probability theory.
\end{abstract}
\dedicatory{Dedicated to Kurt G\"odel on the 100th anniversary of his matriculation at the University of Vienna.}

\maketitle

% \begin{abstract}

% \end{abstract}

\section{Introduction}
Two of G\"odel's most famous results are his \emph{Completeness theorem} for first-order logic and his \emph{Incompleteness theorem} for first-order arithmetic.  In this article, we survey how a version of the Completeness theorem for continuous logic has recently been used, together with a major undecidability result in quantum information theory, to prove a number of incompleteness results in operator algebras.  (Some basic definitions and facts about operator algebras can be found in Section 3.)

The main incompleteness result is connected to aguably one of the most famous open problems in operator algebras, namely the \textbf{Connes embedding problem} (CEP), which first arose in Connes' landmark 1976 paper \cite{Connes}, where he wrote the following:  

\emph{We now construct an approximate imbedding of $N$ in $\R$. Apparently such an imbedding ought to exist for all II$_1$ factors because it does for the regular representation of free groups. However, the construction below relies on condition 6.}

Here, $\R$ is a particularly important von Neumann algebra known as the \textbf{hyperfinite II$_1$ factor}.  In the paper, Connes shows how to ``approximately embed'' some particular tracial von Neumann algebra $N$ into $\R$, which he later shows means that he can embed $N$ into an ultrapower of $\R$.  He then comments that, although his proof relies on a particular condition satisfied by the algebra $N$, this embedding out to exist regardless of this extra assumption, the reason being that the embedding exists for another particularly important algebra, namely the group von Neumann algebra corresponding to the free group.  The relative uncertainty as to the strength of his conviction (given the use of the words ``ought to'') combined with the fairly weak justification regarding the free group factor, has led many to call this statement a \emph{problem} rather than a \emph{conjecture}.

The CEP was later shown to have (surpring) connections to a number of disparate areas of mathematics, including \cstar-algebra theory, free probability, logic, and quantum information theory to name a few.  It was the latter connection that ultimately led to its refutation in 2020, when a result in quantum complexity theory known as $\mre$ \cite{MIP} was announced.  Roughly speaking, $\mre$ implies that the sets of probability distributions arising from measuring entangled quantum states are incredibly complicated (from the point of view of computability theory); a detailed discussion of the result, together with the necessary background material on quantum correlations, can be found in Section 2 of the paper.  The connection between logic and the CEP is rather straightforward:  in the language appropriate for studying tracial von Neumann algebras, the CEP states that every tracial von Neumann algebra is a model of the universal theory of $\R$.

The path from $\mre$ to the refutation of the CEP is highly nontrivial and involves several deep results.  That being said, it was shown by the author and Hart in \cite{compR} and \cite{R} that by using a version of G\"odel's Completeness theorem for continuous logic together with certain operator algebraic interpretations of the aforementioned correlation sets, one can give a refutation of CEP from $\mre$ that avoids these intermediate results.  Moreover, just as G\"odel's incompleteness theorem for arithmetic shows that no satisfiable c.e. subset of true arithmetic can axiomatize true arithmetic, the model-theoretic approach mentioened above yields an Incompleteness theorem for the universal theory of $\R$:  there is no satisfiable c.e. subtheory of the universal (or even full) theory of $\R$ that axiomatizes the universal theory of $\R$.  In \cite{R}, we deemed this result a G\"odelian refutation of the CEP.  Besides being interesting from the logical perspective, this G\"odelian refutation of the CEP had a number of interesting purely operator algebraic consequences, as was outlined in \cite{R}.  This G\"odelian refutation of the CEP is described in Section 4 of the paper.   

The G\"odelian refutation of the CEP led also to a number of other undecidability/incompleteness results in operator algebras \cite{QWEPundec, tsirelson} which we survey in Sections 4 and 5 of the paper.

Only a couple of months before the writing of this article, the ideas around $\mre$ were used to refute a prominent conjecture in probability theory known as the \textbf{Aldous-Lyons conjecture}.  Although this resolution does not involve model theory per se\footnote{At least for the moment...}, we could not resist including a brief discussion of this conjecture and its resolution in Section 6.

A number of open questions related to the themes of the paper are collected in Section 7.

It is well-known that G\"odel became great friends with Einstein during the latter part of their careers while at the Institute for Advanced Study; G\"odel was very interested in physics and even dabbled in general relativity, constructing a pathological solution to Einstein's equations that contained closed timelike loops.  (See \cite{godeleinstein} for an interesting popular account of both this friendship and solution to Einstein's equations.)  We can only speculate that, around 70 years after his original Completeness and Incompleteness Theorems were published, G\"odel would have been thrilled to see his ideas being pursued in these areas of mathematics and computation so closely tied to physics.   

Throughout this article, we assume familiarity with basic continuous logic.  The reader unfamiliar with continuous logic is encouraged to refer to Hart's article \cite{Hart}, which is written with an eye towards applications to operator algebras.

The reader interested in learning more about the CEP and its connections with logic, quantum information theory, and \cstar-algebras can consult the author's (fairly comprehensive) survey \cite{bulletin}.

One bit of notation that we employ throughout the paper:  given a positive integer $n$, we set $[n]:=\{1,\ldots,n\}$.

We thank Matthias Aschenbrenner, Michael Chapman and Henry Yuen for very helpful conversations during the writing of this paper.

\section{The main undecidability result}

In this section, we state the main undecidability result on which all the others rest, namely the result known as $\mre$.  First, we explain the requisite terminology from quantum mechanics.

\subsection{Quantum correlation sets}

The main undecidability result concerns the complexity of the sets of conditional probabilities arising from measuring entangled quantum states.  Before we can define these sets, we need to review how quantum measurements work.

Suppose that Alice is a scientist studying some (quantum) physical system.  She represents various states of this system by unit vectors in some (for now, finite-dimensional) Hilbert space $H_A$.  The mathematical object used in connection with measuring some particular observable of the system is a sequence of operators $\vec A=(A_1,\ldots,A_n)$ on $H_A$ associated with the possible outcomes of the measurement; according to the Born rule of quantum mechanics, the probability that the $i^{\text{th}}$ possible outcome occurs when the system is measured in the state given by the unit vector $\xi$ is given by $\langle A_i\xi,\xi\rangle$.  In order for these quantities to represent probabilities, we need $0\leq \langle A_i\xi,\xi\rangle\leq 1$ for all unit vectors $\xi\in H_A$ and all $i\in [n]$.  Moreover, given any unit vector $\xi$, the probability of some outcome occurring must be $1$, corresponding to the requirement that $\sum_{i=1}^n\langle A_i\xi,\xi\rangle=1$.  Given that these two requirements must hold for all unit vectors $\xi$, we see that the sequence $\vec A=(A_1,\ldots,A_n)$ of operators on $H_A$ must satisfy the following two requirements:
\begin{enumerate}
    \item Each $A_i$ is a \emph{positive operator}, meaning that $\langle A_i\xi,\xi\rangle\geq 0$ for all $\xi\in H_A$.
    \item $\sum_{i=1}^n A_i=I_A$, where $I_A$ is the identity operator on $H_A$.
\end{enumerate}
We call a sequence $\vec A=(A_1,\ldots,A_n)$ satisfying the above two requirements a \textbf{positive operator valued measure (POVM) of length $n$ on $H_A$}.

Things get more interesting when a second scientist Bob enters the picture.  States of Bob's physical system are represented by unit vectors in some (again, for now, finite-dimensional) Hilbert space $H_B$.  The \textbf{composite system} consisting of Alice's system and Bob's system is modeled by the tensor product Hilbert space $H_A\otimes H_B$.\footnote{For those readers unfamiliar with the tensor product of two Hilbert spaces, perhaps the simplest description of this space is that an orthonormal basis for $H_A\otimes H_B$ is given by vectors of the form $e\otimes f$, where $e$ ranges over an orthonormal basis for $H_A$ and $f$ ranges over an orthonormal basis for $H_B$.}  If the state of Alice's system is modeled by the unit vector $\xi_A\in H_A$ and the state of Bob's system is modeled by the unit vector $\xi_B\in H_B$, then the state of the composite system is modeled by the unit vector $\xi_A\otimes \xi_B\in H_A\otimes H_B$.  However, there are unit vectors $\xi\in H_A\otimes H_B$ that are not of this form; in this case, the corresponding state is said to be \textbf{entangled}.  When a composite system is in an entangled state, neither of the constitutent systems can be thought of as having a definite state of their own.  

Given operators $A$ and $B$ on $H_A$ and $H_B$ respectively, one can define a new operator $A\otimes B$ on $H_A\otimes H_B$ whose action on elementary tensors is given by $(A\otimes B)(\xi_A\otimes \xi_B):=A\xi_A\otimes B\xi_B$.  If each of $A$ and $B$ are positive, then so is $A\otimes B$.  If $\vec A:=(A_1,\ldots,A_n)$ and $\vec B:=(B_1,\ldots,B_n)$ are POVMs on $H_A$ and $H_B$ respectively of length $n$, then one obtains a POVM $\vec A\otimes \vec B$ of length $n^2$ on $H_A\otimes H_B$ consisting of the operators $A_i\otimes B_j$ for $i,j\in [n]$.

We are now ready to define the main sets of interest in this paper.  Fix integers $k,n\geq 2$.  Suppose that $H_A$ and $H_B$ are finite-dimensional Hilbert spaces and that, for each $x,y\in [k]$, there are POVMs $\vec A^x:=(A^x_1,\ldots,A^x_n)$ and $\vec B^y:=(B^y_1,\ldots,B^y_n)$ of length $n$ on $H_A$ and $H_B$ respectively.  Further suppose that $\xi\in H_A\otimes H_B$ is a unit vector.  Consider the probability $$p_{\vec A,\vec B,\xi}(a,b|x,y):=\langle (A^x_a\otimes B^y_b)\xi,\xi\rangle$$ of obtaining outcomes $a$ and $b$ respectively when measuring with the $x^{\text{th}}$ POVM on $H_A$ and the $y^{\text{th}}$ POVM on $H_B$, assuming the composite system is in the state corresponding to the unit vector $\xi$.  This collection of numbers forms a $kn\times kn$-matrix, which we can also view as a vector $p_{\vec A,\vec B,\xi}\in [0,1]^{k^2n^2}$.  We call such a vector $p$ a \textbf{quantum correlation}.  The collection of quantum correlations, where there are $k$  measurement operators on each space, each of which have $n$ possible outcomes, is denoted $C_q(k,n)$.  It is important to note that while $H_A$ and $H_B$ are required to be finite-dimensional, there is no upper bound imposed on these dimensions.

It is readily verified that each set $C_q(k,n)$ is a convex subset of $[0,1]^{k^2n^2}$.  An old problem of Tsirelson (sometimes called the \emph{weak Tsirelson problem}, as opposed to the (full) Tsirelson problem, which we will encounter later in this section) asks whether or not $C_q(k,n)$ is a closed subset of $[0,1]^{k^2n^2}$.  Only relatively recently a negative resolution to this problem was given by Slofstra in \cite{slofstra}.  We consider the closure $C_{qa}(k,n):=\overline{C_q(k,n)}$ of $C_q(k,n)$ inside of $[0,1]^{k^2n^2}$; elements of this set are called \textbf{quantum asymptotic correlations}.

The main undecidability result of \cite{MIP} asserts that, in general, as $k$ and $n$ vary, the closed, convex subsets $C_{qa}(k,n)$ are incredibly complicated.  For example, the authors point out in the introduction to \cite{MIP} that, combined with standard results in combinatorial optimization theory (in particular, using the main result of \cite{NY}), there is no effective solution to the \textbf{$\epsilon$-membership problem} for these sets:  given rational $\epsilon \in (0,1)$, there is no algorithm such that, given $k,n\geq 2$ and $p\in [0,1]^{k^2n^2}$ with rational coordinates, determines whether or not $p$ belongs to $C_{qa}(k,n)$ or is at least $\epsilon$ away from all elements of $C_{qa}(k,n)$, promised that one of these is the case.

\subsection{Nonlocal games and $\mre$}

One can use elements of $C_{qa}(k,n)$ as strategies in very simple two-player games called \textbf{nonlocal games}.  Once again, fix integers $k,n\geq 2$.  A nonlocal game with $k$ questions and $n$ answers consists of two parties, say Alice and Bob again, interacting with a Referee.  The game consists of three steps:
\begin{enumerate}
    \item First, the Referee uses a probability distribution on $[k]^2$ to randomly ask Alice and Bob a pair of questions $(x,y)\in [k]^2$.
    \item Second, \emph{somehow} Alice and Bob return a pair of answers $(a,b)\in [n]^2$.  
    \item Finally, to decide whether or not Alice and Bob ``win'' this play of the game, the Referee consults a ``rulebook'' to see if their joint answers to the questions are ``correct.''  More formally, the Referee computes $D(x,y,a,b)$, where $D:[k]^2\times [n]^2\to \{0,1\}$ is some function, called the \textbf{decision predicate} for the game; $D(x,y,a,b)=1$ indicates that they have won this round of the game, while $D(x,y,a,b)=0$ indicates that they have lost.
\end{enumerate}

More formally, a nonlocal game with $k$ questions and $n$ answers is a pair $\frak G=(\pi_{\frak{G}},D_{\frak{G}})$, where $\pi_{\frak{G}}$ is a probability distribution on $[k]^2$ and $D_{\frak{G}}:[k]^2\times [n]^2\to \{0,1\}$ is a function.

The question becomes:  how should Alice and Bob decide to play this game?  To lay down the ground rules, Alice and Bob can discuss ahead of time how they are going to play the game; however, once the game has started, they are not allowed to communicate to each other (in particular, they cannot let the other person know the question that they received).  Moreover, Alice and Bob are ``on the same team''; they want to jointly win each round of the game.  Such players are said to be cooperating but non-communicating.

In its most general form, a strategy for the game would consist of probability distributions $p(a,b|x,y)$ indicating the probability that, when receiving the pair of questions $(x,y)\in [k]^2$, they respond with the pair of answers $(a,b)\in [n]^2$.  Such a strategy is once again naturally an element of $[0,1]^{k^2n^2}$.  Given a strategy $p$, the expected value that Alice and Bob win the game when employing strategy $p$ is given by the quantity 
$$\val(\frak G,p):=\sum_{(x,y)\in [k]^2}\pi_{\frak G}(x,y)\sum_{(a,b)\in [n]^2}p(a,b|x,y)D_{\frak G}(x,y,a,b).$$

One simple-minded strategy for Alice and Bob to follow would be to use a deterministic strategy, meaning that Alice and Bob use functions $A:[k]\to [n]$ and $B:[k]\to [n]$ which tells them how they should respond to each question.  A slightly more sophisticated strategy could incorporate some ``classical randomness'' by, for example, each player deciding on 6 possible deterministic strategies and then rolling a die before the game starts to decide which strategy they will use in that round of the game.  (More generally, they could use any probability space to determine which of their classical strategies to employ.)  The collection of such strategies will be called \textbf{classical correlations} and will be denoted by $C_c(k,n)$.  The maximum expected value of them winning the game when employing a classical strategy is called the \textbf{classical value of $\frak G$} and is given by
$$\val(\frak G):=\sup_{p\in C_c(k,n)}\val(\frak G,p).$$

On the other hand, one may instead consider employing quantum strategies of the form $p=p_{\vec A,\vec B,\xi}$ as above, leading to the \textbf{quantum value of $\frak G$}, given by
$$\val^*(\frak G):=\sup_{p\in C_q(k,n)}\val(\frak G,p)=\sup_{p\in C_{qa}(k,n)}\val(\frak G,p).$$
There is a standard way of viewing a classical correlation as a quantum correlation, whence $C_c(k,n)\subseteq C_q(k,n)$ for all $k,n\geq 2$ and $\val(\frak G)\leq \val^*(\frak G)$ for every nonlocal game $\frak G$.  Bell's famous theorem in quantum mechanics \cite{bell} can be viewed as asserting that, in general, $C_c(k,n)$ is a proper subset of $C_q(k,n)$, which can in turn be used to show that, for certain nonlocal games $\frak G$, $\val(\frak G)<\val^*(\frak G)$ holds.

It is straightforward to verify that the supremum in the definition of $\val(\frak G)$ is indeed a maximum and occurs when employing some deterministic strategy.  Consequently, by employing a brute force search, one sees that the function $\frak G\mapsto \val(\frak G)$ (restricted to games where $\pi_{\frak G}$ takes only rational values) is a computable function.

But what about the function $\frak G\mapsto \val^*(\frak G)$?  Since, a priori, $\val^*(\frak G)$ is irrational, even when $\pi_{\frak G}$ takes only rational values, one must refine the question by asking if there is a computable function which, upon input $\frak G$ and rational $\epsilon>0$, returns an interval $I\subseteq [0,1]$ with rational endpoints containing $\val^*(\frak G)$ and such that $|I|<\epsilon$?

By enumerating denser and denser sets of unit vectors from larger and larger finite-dimensional Hilbert spaces while also enumerating denser and denser sets of POVMs on those spaces, a brute force search allows one to find a computable sequence of lower bounds for $\val^*(\frak G)$, uniformly in $\frak G$.  In computability-theoretic terminology, this says that $\val^*(\frak G)$ is a \textbf{left-c.e.} real number, uniformly in $\frak G$.  The definition of a \textbf{right-c.e.} real number is defined analogously, using a computable sequence of upper bounds.  A real number that is both left-c.e. and right-c.e. is called a \textbf{computable} real number.

The issue with the previous algorithm is that there is no a priori bound on the dimension of the Hilbert spaces involved in the quantum correlations being used as strategies for the games.  As is usual in the subject, perhaps if one waits a sufficiently large time, one will arrive at a dimension where there are strategies which outperform the ones considered thus far.  This is of course analogous to the unsolvability of the halting problem, where perhaps one just needs to be more patient and the program will halt if one simply waits another trillion years.

The main result of \cite{MIP} states that the function $\frak G\mapsto \val^*(\frak G)$ is \emph{not} computable and that the above analogy with the halting problem is appropriate:

\begin{thm}[$\mre$]\label{mip}
There is a computable mapping $\cal M\mapsto \frak G_{\cal M}$ from Turing machines to nonlocal games such that:
\begin{itemize}
    \item If $\cal M$ halts on the empty input, then $\val^*(\frak G_{\cal M})=1$.
    \item If $\cal M$ does not halt on the empty input, then $\val^*(\frak G_{\cal M})\leq \frac{1}{2}$.
\end{itemize}
\end{thm}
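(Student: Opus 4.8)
The plan is to realise the halting problem as an instance of an entangled multi-prover interactive proof, exploiting the fact that shared entanglement confers on non-communicating provers an enormous amount of computational power: classically the analogous class $\MIP$ of languages decided by multi-prover games is just $\NEXP$ (Babai--Fortnow--Lund), hence decidable, whereas allowing the provers to share a quantum state should push the corresponding class $\MIP^*$ all the way up to $\operatorname{RE}$. The engine driving this is a \emph{recursive compression} theorem: there is a polynomial-time procedure $\mathrm{Compress}$ which, given a succinct description of a nonlocal game $\frak G$ whose referee runs in time $T$, outputs (a succinct description of) a game $\mathrm{Compress}(\frak G)$ whose referee runs in time only $\mathrm{poly}(\log T)$ and such that $\val^*(\mathrm{Compress}(\frak G))=1$ whenever $\val^*(\frak G)=1$, while $\val^*(\mathrm{Compress}(\frak G))\le\tfrac12$ whenever $\val^*(\frak G)\le\tfrac12$ --- and, crucially, with quantitative control on the amount of entanglement any strategy of value exceeding $\tfrac12$ for $\mathrm{Compress}(\frak G)$ must use. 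To even state such a self-referential procedure one first fixes a ``normal form'' for referees (a tuple of Turing machines generating the question distribution, the answer format, and the decision predicate), so that a referee can be handed the description of another referee as input.

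Constructing $\mathrm{Compress}$ is where essentially all of the difficulty lies, and it is built by composing several sub-protocols, each of which must be shown to be \emph{quantum sound}. \emph{Question reduction} (``introspection'') has the provers sample their own questions coherently instead of receiving them from the referee; the referee then certifies honest sampling by running a quantum-sound low-degree / Pauli-braiding test, which is exponentially cheaper to describe than the question distribution itself. \emph{Answer reduction} replaces the possibly expensive decision predicate by having the provers supply a probabilistically checkable proof that their answers would have been accepted, of which the referee inspects only a few symbols --- a quantum adaptation of classical PCP machinery. \emph{Oracularization} reshapes the game so that these composition steps go through, and \emph{gap amplification} via anchored parallel repetition is applied after each transformation to restore the completeness--soundness gap to $1$ versus $\tfrac12$. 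In every case quantum soundness is a \emph{rigidity} statement: any entangled strategy of near-optimal value must be close, in the relevant state-dependent norm, to the intended honest strategy, with error terms that remain under control through the composition.

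With $\mathrm{Compress}$ in hand, the map $\cal M\mapsto\frak G_{\cal M}$ comes from a fixed-point construction (Kleene's recursion theorem). One designs a referee which, given the description of a Turing machine $\cal M$ together with a scale parameter, first simulates $\cal M$ for a number of steps dictated by the scale: if $\cal M$ has halted, it plays a game that is trivially won (value $1$); otherwise it plays $\mathrm{Compress}$ applied to the description of this very referee at the next scale. The recursion theorem produces an actual referee meeting this specification, and $\frak G_{\cal M}$ is the resulting game at the initial scale --- whose description, thanks to compression, has size polynomial in $|\cal M|$, so the map is computable. If $\cal M$ halts within the budget of some finite scale, the game has value $1$ there, and pulling this back through the finitely many intervening applications of the completeness half of compression gives $\val^*(\frak G_{\cal M})=1$. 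If $\cal M$ never halts, then at every scale the game is the compression of the game at the next scale; were $\val^*(\frak G_{\cal M})>\tfrac12$, repeatedly invoking the soundness half of compression would propagate value $>\tfrac12$ up to every scale, and the entanglement guarantees attached to compression, iterated along this chain, would then be incompatible with the bounded entanglement of the finite-dimensional strategy one started from. Hence $\val^*(\frak G_{\cal M})\le\tfrac12$.

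The decisive obstacle is the quantum soundness of the introspection step: one needs rigidity for the low-degree / Pauli-braiding tests robust enough that a prover forced to generate its own high-entropy questions genuinely cannot cheat, with error bounds strong enough to survive the composition and, ultimately, the iteration through the recursion. By comparison the classical PCP ingredients, the oracularization, the bookkeeping for normal-form referees, and the recursion-theoretic fixed point are routine; the length of \cite{MIP} reflects precisely the effort of carrying the quantum robustness estimates through every layer of the construction.
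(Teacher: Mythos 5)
The paper does not prove this theorem; it is quoted directly from \cite{MIP}, with the remark that ``the proof of Theorem \ref{mip} is quite involved'' and a pointer to Section 2 of \cite{MIP} for an account of the main ideas. Your sketch is a faithful high-level summary of that proof --- recursive compression via introspection, answer reduction by quantum-sound PCP, oracularization and anchored parallel repetition, rigidity of the low-degree/Pauli-braiding tests as the soundness engine, and Kleene's recursion theorem to close the self-referential loop --- so it matches the approach the paper defers to, though of course at the level of a roadmap rather than a proof.
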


A word is in order regarding the name of the previous result.  The reader familiar with computational complexity theory might guess (correctly!) that the names on either side of the equals sign are names of complexity classes.  The class RE consists of those languages (in the sense of complexity theory) that are recursively enumerable.  The class $\operatorname{MIP}^*$ consists of those languages $L$ for which there is an effective mapping $z\mapsto \frak G_z$ from bits to nonlocal games satisfying the conclusion of Theorem \ref{mip} with $\cal M$ halting replaced by $z\in L$ and $\cal M$ not halting replaced by $z\notin L$.  The theorem thus states that the halting problem belongs to $\operatorname{MIP}^*$, when so do all languages in RE by the RE-completeness of the halting problem.  Conversely, using the brute force search method discussed above, any language in $\operatorname{MIP}^*$ belongs to RE, whence the two classes coincide.  MIP stands for ``Multiprover interactive proofs'' while the * indicates that the provers share quantum entanglement.  Besides the fact that it refutes the CEP, $\mre$ is intellectually fascinating for its demonstration of the power of quantum resources in complexity theory:  without the shared quantum resources, languages in $\operatorname{MIP}$ are all computable (and in fact coincide with the class of languages known as NEXP). 

The proof of Theorem \ref{mip} is quite involved and uses a wide variety of techniques.  An extremely readable account of the main idea is given in \cite[Section 2]{MIP}.

\subsection{Tsirelson's problem}

A consequence of Theorem \ref{mip} is a negative solution to a problem in quantum information theory known as \textbf{Tsirelson's problem} \cite{tsirelsonproblem}.  Tsirelson's problem involves an a priori larger class of quantum correlations known as \textbf{quantum commuting correlations}.  Once again, the correlations will arise from Alice and Bob measuring certain observable quantities of a quantum system.  However, this time, it may not be possible to recognize the system as the composite of Alice's part of the system and Bob's part of the system, that is, the Hilbert space $H$ corresponding to states of the system may not be obviously decomposable as a tensor product of the form $H_A\otimes H_B$.  On the other hand, this time we do allow for the Hilbert space $H$ to be infinite-dimensional.  (In fact, if we require $H$ to be finite-dimensional in what follows, then it is known that the set of correlations we are about to define will coincide with the set $C_{qa}(k,n)$ below \cite{needinfdim}.)  However, in order for Alice and Bob to be able to perform their measurements simultaneously, we will require that their corresponding measurement operators commute.

More precisely, fix integers $k,n\geq 2$ as before.  Further suppose that we are given a (possibly infinite-dimensional) Hilbert space $H$, a unit vector $\xi\in H$, and, for each $x,y\in [k]$, POVMs\footnote{Now that $H$ is possibly infinite-dimensional, we also require that each $A^x_a$ and $B^y_b$ is a bounded operator on $H$.} $\vec A^x$ and $\vec B^y$ on $H$ of length $n$ for which $A^x_aB^y_b=B^y_bA^x_a$ for all $x,y\in [k]$ and all $a,b\in [n]$.  The corresponding correlation is given by $p_{\vec A,\vec B,\xi}(a,b|x,y):=\langle A^x_aB^y_b\xi,\xi\rangle$.  We let $C_{qc}(k,n)$ denote the set of all such correlations.  Note that every correlation in $C_q(k,n)$ belongs to $C_{qc}(k,n)$ once we identify the operator $A^x_a$ on $H_A$ with the operator $A^x_a\otimes I_{H_B}$ on $H_A\otimes H_B$ and likewise for the $B^y_b$'s.\footnote{And of course we then identify $p_{\vec A,\vec B,\xi}$ with $p_{\vec A\otimes I_{H_B},I_{H_A}\otimes \vec B,\xi}$.}  If we define the \textbf{quantum commuting value} of a nonlocal game $\frak G$, denoted $\val^{co}(\frak G)$, just as we defined the quantum value of the game except that we now take the supremum over the a priori larger set of quantum commuting correlations, it follows that $\val^*(\frak G)\leq \val^{co}(\frak G)$ for all nonlocal games $\frak G$.

One can show that $C_{qc}(k,n)$ is a closed, convex subset of $[0,1]^{k^2n^2}$, whence it in fact contains $C_{qa}(k,n)$.  Tsirelson's problem asks:  is $C_{qa}(k,n)=C_{qc}(k,n)$ for all $k,n\geq 2$?  Theorem \ref{mip} answers the question negatively:

\begin{cor}\label{tsirelson}
There exists $k,n\geq 2$ for which $C_{qa}(k,n)\not=C_{qc}(k,n)$.  In fact, there is a nonlocal game $\frak G$ for which $\val^*(\frak G)<\val^{co}(\frak G)$.
\end{cor}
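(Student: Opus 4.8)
The plan is to derive Corollary \ref{tsirelson} from Theorem \ref{mip} by a soft argument using only the computability-theoretic features of the two value functions. First I would recall the two facts established in the surrounding text: on the one hand, by the brute-force search over finite-dimensional Hilbert spaces and POVMs, the quantity $\val^*(\frak G)$ is a left-c.e. real, uniformly in $\frak G$ (when $\pi_{\frak G}$ is rational-valued); on the other hand, one can show that $\val^{co}(\frak G)$ is \emph{right}-c.e., uniformly in $\frak G$. The latter is the one genuinely new ingredient: it follows from the fact that $C_{qc}(k,n)$ is the set of correlations coming from a tracial (or at least a state on a suitable universal C*-algebra) representation, so that $\val^{co}(\frak G)$ can be written as the optimal value of a semidefinite program with a computable sequence of tighter and tighter outer relaxations — this is the ``NPA hierarchy''; each level is a finite SDP whose optimal value is computable, the values decrease to $\val^{co}(\frak G)$, and this yields a uniformly computable sequence of upper bounds.

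Next I would argue by contradiction. Suppose $\val^*(\frak G)=\val^{co}(\frak G)$ for every nonlocal game $\frak G$ (equivalently, by a standard argument, suppose $C_{qa}(k,n)=C_{qc}(k,n)$ for all $k,n\geq 2$; note $C_{qa}\subseteq C_{qc}$ always, and the two value functions agreeing for all games forces the closed convex sets to agree by separation/duality). Then $\val^*(\frak G)$ would be simultaneously left-c.e. and right-c.e., uniformly in $\frak G$, hence a uniformly computable real: there would be an algorithm that, on input $\frak G$ and rational $\e>0$, outputs a rational interval of length $<\e$ containing $\val^*(\frak G)$. I would then feed this through Theorem \ref{mip}. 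Given a Turing machine $\cal M$, form $\frak G_{\cal M}$; running the hypothetical approximation algorithm with $\e=\frac14$ produces an interval $I$ with $|I|<\frac14$ containing $\val^*(\frak G_{\cal M})$. If $\cal M$ halts on the empty input then $\val^*(\frak G_{\cal M})=1$, so $I\subseteq(\frac34,1]$; if $\cal M$ does not halt then $\val^*(\frak G_{\cal M})\le\frac12$, so $I\subseteq[0,\frac34)$. These two cases are disjoint and decidable from $I$, so we would decide the halting problem — contradiction. Hence for some $k,n$ we have $C_{qa}(k,n)\neq C_{qc}(k,n)$, and in fact some game witnesses $\val^*(\frak G)<\val^{co}(\frak G)$.

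I would expect the main obstacle to be justifying that $\val^{co}(\frak G)$ is uniformly right-c.e., i.e.\ setting up the semidefinite-programming (NPA) hierarchy with enough care to see that (a) each relaxation level is a finite SDP with computable optimal value, uniformly in $\frak G$ and the level, and (b) the sequence of optimal values converges \emph{down} to $\val^{co}(\frak G)$ rather than to something smaller; convergence uses that a consistent family of moment data gives, via GNS, an actual quantum commuting strategy on a possibly infinite-dimensional Hilbert space. A secondary, more routine, point is the equivalence between ``$\val^*=\val^{co}$ for all games'' and ``$C_{qa}(k,n)=C_{qc}(k,n)$ for all $k,n$'': the forward direction is Hahn--Banach separation (if some $p\in C_{qc}\setminus C_{qa}$, a separating functional is, after normalization, essentially a decision predicate, giving a game with $\val^*<\val^{co}$), and the reverse is immediate. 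Everything else is bookkeeping with the definitions of left/right-c.e.\ reals and an appeal to the undecidability of the halting problem via Theorem \ref{mip}.
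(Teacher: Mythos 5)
Your proposal is correct and matches the paper's argument: the paper likewise derives the corollary by combining Theorem \ref{mip} with the fact that $\val^*$ is left-c.e.\ and $\val^{co}$ is right-c.e.\ (Proposition \ref{qcrightce}, proved via semidefinite programming/NPA-type arguments, or alternatively via the Completeness theorem as in Subsection \ref{digression}), so that equality would make both computable and decide the halting problem. Your Hahn--Banach remark on the equivalence with the correlation-set formulation is a reasonable and correct aside, though the paper simply passes directly through the game values.
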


\begin{remark}\label{separation}
In \cite[Section 12.3]{MIP}, the authors show how to construct a somewhat explicit example of a correlation that belongs to $C_{qc}(k,n)\setminus C_{qa}(k,n)$.
\end{remark}

Recall that $\val^*(\frak G)$ is left c.e., uniformly in $\frak G$.  On the other hand, we have:  

\begin{prop}\label{qcrightce}
The quantum commuting value $\val^{co}(\frak G)$ is \emph{right}-c.e. uniformly in $\frak G$.
\end{prop}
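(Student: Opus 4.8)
The plan is to exhibit, uniformly in the game $\frak G$, a computable sequence of upper bounds converging to $\val^{co}(\frak G)$; equivalently, to show that the set of rationals $q$ with $q > \val^{co}(\frak G)$ is c.e.\ uniformly in $\frak G$. The natural tool is the noncommutative Positivstellensatz / NPA (Navascu\'es--Pironio--Ac\'in) semidefinite hierarchy, which I would phrase purely in terms of the operator-algebraic description of $C_{qc}(k,n)$ given just above: a correlation $p$ lies in $C_{qc}(k,n)$ iff there is a Hilbert space $H$, a unit vector $\xi$, and POVMs $\vec A^x, \vec B^y$ with $[A^x_a, B^y_b]=0$ realizing $p(a,b|x,y) = \langle A^x_a B^y_b \xi, \xi\rangle$. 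First I would observe that $\val^{co}(\frak G) = \sup \{ \sum_{x,y}\pi_{\frak G}(x,y)\sum_{a,b} p(a,b|x,y) D_{\frak G}(x,y,a,b) : p \in C_{qc}(k,n)\}$ is the optimal value of an optimization problem over (representations of) the universal C*-algebra generated by two commuting families of POVMs, and that this optimum equals $\sup_\tau \tau(f_{\frak G})$ over states $\tau$ on that algebra, where $f_{\frak G}$ is a fixed self-adjoint element (a linear combination of the generators with coefficients built from $\pi_{\frak G}$ and $D_{\frak G}$).

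Next I would set up the dual side. For each degree $d$, consider the finite-dimensional problem: minimize $\lambda$ over real scalars $\lambda$ and finitely many coefficients such that $\lambda \cdot \u - f_{\frak G}$ can be written (formally, in the free $*$-algebra modulo the relations defining the POVM/commutation constraints) as a sum of Hermitian squares $\sum_j g_j^* g_j$ plus combinations $\sum_i h_i^* r_i h_i$ of the defining relations $r_i$ (positivity $A^x_a \geq 0$, $\sum_a A^x_a = \u$, $[A^x_a,B^y_b]=0$), all with the $g_j, h_i$ of $*$-degree at most $d$. This is a semidefinite program of size bounded computably in $\frak G$ and $d$, so its optimal value $\lambda_d(\frak G)$ is computable — uniformly in $\frak G$ and $d$ — to any desired precision (an SDP feasibility/optimization problem with rational data has a computable optimal value up to $\e$, since one can do an effective binary search using an SDP feasibility oracle, which in turn reduces to deciding emptiness of a semialgebraic set and can be done via, e.g., the ellipsoid method or just interval arithmetic on a finite search). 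Since any such sum-of-squares certificate forces $\tau(\lambda \cdot \u - f_{\frak G}) \geq 0$ for every state $\tau$ on every representation — in particular every quantum commuting model — we get $\lambda_d(\frak G) \geq \val^{co}(\frak G)$ for every $d$, and $\lambda_{d+1}(\frak G) \le \lambda_d(\frak G)$.

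The crux is completeness of the hierarchy: $\inf_d \lambda_d(\frak G) = \val^{co}(\frak G)$. This is where I expect the main work, and it is the standard NPA convergence argument, whose key ingredient is the GNS construction plus a C*-algebraic Positivstellensatz. If $\lambda < \val^{co}(\frak G)$ then $\lambda\cdot\u - f_{\frak G}$ is not positive in the universal C*-algebra $\mathcal A$ generated by the commuting POVMs (there is a state, hence a representation, witnessing this), so by the C*-Positivstellensatz it is not a norm-limit of elements of the $*$-subalgebra cone generated by sums of Hermitian squares and the relation terms; running the usual argument (take a state on the free $*$-algebra modulo relations that is nonnegative on all the degree-$\le d$ cone generators, GNS it, and use a compactness/diagonal argument as $d\to\infty$ to extract a state on $\mathcal A$ with $\tau(f_{\frak G}) \ge \lambda$) shows $\lambda_d(\frak G) > \lambda$ can fail only for finitely many $d$ — more precisely, $\lambda_d(\frak G) \to \val^{co}(\frak G)$. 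One must be slightly careful that the POVM elements are bounded (they are, by $0 \le A^x_a \le \u$, which is itself a consequence of the defining relations and hence available inside the hierarchy), so the GNS representations are by bounded operators and the diagonal limit lives on a genuine C*-algebra; this is the only real subtlety. Granting this, $\val^{co}(\frak G) = \inf_d \lambda_d(\frak G)$ with each $\lambda_d(\frak G)$ computable uniformly, so for rational $q$ we have $q > \val^{co}(\frak G)$ iff $q > \lambda_d(\frak G)$ for some $d$ — a c.e.\ condition, uniformly in $\frak G$ — which is exactly the assertion that $\val^{co}(\frak G)$ is right-c.e.\ uniformly in $\frak G$. $\qed$
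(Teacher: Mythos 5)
Your proposal is exactly what the paper labels ``the standard proof'' of Proposition~\ref{qcrightce}: the noncommutative Positivstellensatz / NPA semidefinite hierarchy, with upper bounds $\lambda_d(\frak G)$ coming from degree-$d$ sum-of-Hermitian-squares certificates and convergence via GNS plus an Archimedean Positivstellensatz. The paper itself does \emph{not} write this argument out; it defers to \cite[Section 6.1]{bulletin} and instead, in Subsection~\ref{digression}, proves a \emph{variant} (Theorem~\ref{synctsirelson}, the synchronous value $\val^{co,s}$) by a genuinely different route: using the Paulsen et al.\ tracial characterization of $C_{qc}^s$ (Theorem~\ref{oasync}) to translate ``$\val^{co,s}(\frak G)\geq r$'' into satisfiability of a c.e.\ theory in the language of tracial \cstar-algebras, and then invoking the Completeness theorem for continuous logic to detect unsatisfiability by enumerating formal proofs. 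Your SDP approach yields the full (non-synchronous) statement and makes the analytic content explicit; the paper's proof-theoretic approach is shorter, stays entirely inside continuous logic, and suffices for the downstream Corollary~\ref{tsirelson}, but only treats the synchronous case. One small caveat on your write-up: asserting that each $\lambda_d(\frak G)$ is ``computable to any desired precision'' imports nontrivial SDP subtleties (attainment, duality gaps, feasibility) that you don't need. For right-c.e.-ness it is enough that each $\lambda_d$ be right-c.e.\ uniformly, and this follows elementarily by enumerating formal SOS certificates with rational coefficients and verifying the resulting identity symbolically modulo the POVM and commutation relations; the infimum over $d$ of these right-c.e.\ quantities is then right-c.e., and the completeness of the hierarchy (which you correctly identify as the crux and sketch via GNS) identifies this infimum with $\val^{co}(\frak G)$.
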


Corollary \ref{tsirelson} follows immediately from Theorem \ref{mip} and the previous proposition.  Indeed, if $\val^*(\frak G)=\val^{co}(\frak G)$ for all nonlocal games $\frak G$, then this common value would in fact be computable uniformly in $\frak G$, contradicting Theorem \ref{mip}.

The standard proof of Proposition \ref{qcrightce} uses techniques from \textbf{semidefinite programming}; a brief explanation is given in \cite[Section 6.1]{bulletin}.  In Subsection \ref{digression} below, we show how a slight variant of Proposition \ref{qcrightce}, which still suffices to prove Corollary \ref{tsirelson}, follows immediately from the Completeness theorem for continuous logic and a basic operator-algebraic fact.

Through two highly nontrivial intermediate results, the negative solution to Tsirelson's problem also provides a negative solution to the Connes embedding problem in von Neumann algebra theory.  In Section 4, we introduce the CEP and show how the Completeness theorem for continuous logic can be used to bypass these intermediate results while also providing a G\"odelian version of the failure of CEP. 

\section{Primer on operator algebras}

In this section, we provide a very quick introduction to von Neumann algebras and \cstar-algebras.  A more thorough introduction to these topics, aimed at logicians, can be found in the introductory articles \cite{ioana} and \cite{szabo} in the volume \emph{Model theory of operator algebras}. 

Throughout, $H$ denotes a complex Hilbert space and $B(H)$ denotes the set of \textbf{bounded linear operators} $T:H\to H$, where $T$ being bounded means the \textbf{operator norm} $\|T\|:=\sup\{\|T(x)\| \ : \ x\in H, \|x\|\leq 1\}$ is finite.  $B(H)$ has the structure of a $*$-algebra, where $*$ denotes the usual adjoint of linear operators.

A \textbf{\cstar-algebra} is a $*$-subalgebra of $B(H)$ closed in the above operator norm topology.  A \textbf{von Neumann algebra} is a unital $*$-subalgebra of $B(H)$ closed in a topology weaker than the norm topology called the \textbf{strong operator topology} (SOT), where a net $(T_i)_{i\in I}$ SOT converges to an operator $T$ if the net $(T_i(x))_{i\in I}$ converges to $T(x)$ for all $x\in H$.  The above descriptions of \cstar-algebras and von Neumann algebras are the ``concrete'' ones while purely abstract definitions are possible.  The unital commutative \cstar-algebras are (up to isomorphism) exactly the algebras $C(X)$ of continuous functions on a compact space $X$, whence \cstar-algebra theory is sometimes referred to as ``noncommutative topology.''  Similarly, the commutative von Neumann algebras are (up to isomorphism) exactly the algebras $L^\infty(X,\mu)$ of essentially bounded measurable functions on a measure space $(X,\mu)$, whence von Neumann algebra theory is often referred to as ``noncommutative measure theory.''

A \textbf{state} on a unital \cstar-algebra $A$ is a linear functional $\varphi:A\to \mathbb{C}$ satisfying $\varphi(1)=1$ that is also \textbf{positive}, meaning that $\varphi(x^*x)\geq 0$ for all $x\in A$; the state is \textbf{faithful} if $\varphi(x^*x)=0$ implies $x=0$, while it is \textbf{tracial} if $\varphi(xy)=\varphi(yx)$ for all $x,y\in A$.  (Traces tend to be denoted by $\tau$ rather than $\varphi$.)  Finally, a state $\varphi$ on a von Neumann algebra is called \textbf{normal} if the restriction of $\varphi$ to the operator norm unit ball is SOT continuous.\footnote{More generally, a linear map $M\to N$ between von Neumann algebras is normal if the restriction to the operator norm unit ball of $M$ is SOT-continuous.}  A particularly important state on $B(H)$ is that given by the map $x\mapsto \langle x\xi,\xi\rangle$ for a unit vector $\xi$ in $H$; this kind of state is called a \textbf{vector state} and is both faithful and normal.

Suppose that $\varphi$ is a state on a \cstar-algebra $A$.  One can define a pre-inner product $\langle\cdot,\cdot\rangle_\varphi$ on $A$ by $\langle x,y\rangle_\varphi:=\varphi(y^*x)$.  The quotient of $A$ by the subspace of elements $x\in A$ for which $\varphi(x^*x)=0$ is then an inner product space with the induced inner product, which we denote by $A_\varphi$.  The completion $H_\varphi$ of $A_\varphi$ is a Hilbert space for which there is a $*$-algebra homomorphism $\pi_\varphi:A\to B(H_\varphi)$ whose action on the dense subspace $A_\varphi$ is given by $\pi_\varphi(a)(\hat b):=\widehat{ab}$, where $\hat{b}$ denotes the coset of $b$ in $A_\varphi$.  This representation of $A$ is called the \textbf{GNS\footnote{GNS stands for Gelfand, Naimark, and Segal.} representation of $A$ with respect to $\varphi$}.  If $\varphi$ is faithful, then so is $\pi_\varphi$.  Note that $\varphi(a)=\langle \pi_\varphi(a)(\hat 1),\hat 1\rangle_\varphi$.  We will have occasion to consider the SOT-closure of $\pi_\varphi(A)$ in $B(H_\varphi)$, which is a von Neumann algebra which we call the \textbf{GNS closure of $A$}.  If $\varphi$ was a tracial state on $A$, then the restriction of the vector state corresponding to the identity of $A$ induces a trace on the GNS closure of $A$.    

The class of \cstar-algebras forms an elementary class in a suitable language in continuous logic \cite{MTOA2} and the corresponding ultraproduct coincides with the usual \cstar-algebraic ultraproduct.  We note that \cstar-algebras are viewed as a many-sorted structures, one sort for each operator norm ball of positive integer radius, and the metric on each sort is that induced by the operator norm.  One can extend this language by an additional unary predicate and extend the axioms of \cstar-algebras to declare that this new unary predicate is a state on the algebra.  Similarly, one can also axiomatize the case that the new predicate is a tracial state.  

The class of \textbf{tracial von Neumann algebras} is also an elementary class in an appropriate language \cite{MTOA2} and the corresonding ultraproduct corresponds to the so-called tracial ultraproduct of tracial von Neumann algebras (which is the ultraproduct referred to in the introduction in connection with the Connes Embedding Problem).  Here, a tracial von Neumann algebra is a pair $(M,\tau)$, where $M$ is a von Neumann algebra and $\tau$ is a faithful, normal trace on $M$.  We note that a tracial von Neumann algebra is viewed as a many-sorted structure, again with sorts given by operator norm balls of positive integer radii, but this time the metric on each sort is that induced by the trace, that is $d(x,y):=\|x-y\|_2$, where $\|z\|_2:=\sqrt{\tau(z^*z)}$.  By an embedding of tracial von Neumann algebras, we mean an injective, normal, trace-preserving $*$-homomorphism.

If $N\subseteq M$ are von Neumann algebras, the \textbf{relative commutant of $N$ in $M$}, denoted $N'\cap M$, is the set of elements in $M$ which commute with every element of $N$.  Von Neumann's \textbf{bicommutant theorem} states that a unital $*$-subalgebra $M$ of $B(H)$ is a von Neumann algebra if and only if $M=M''$, where $M'':=(M'\cap B(H))'\cap B(H)$ is the bicommutant of $M$ inside of $B(H)$.  Consequently, for a unital $*$-subalgebra $M$ of $B(H)$, the von Neumann algebra it generates can either be viewed as the SOT-closure of $M$ or the bicommutant $M''$ of $M$ (both computed inside $B(H)$).  For a von Neumann algebra $M$, $M'\cap M$ is also known as the \textbf{center} of $M$.  $M$ is called a \textbf{factor} if its center is trivial, that is, consists only of scalar multiples of the unit.  Factors are the ``irreducible'' von Neumann algebras as every von Neumann algebra can be decomposed as a direct integral of factors.

Both unital \cstar-algebras and von Neumann algebras can be divided into two categories: \textbf{infinite} and \textbf{finite}.  These adjectives do not apply to their cardinality, but whether or not the identity operator is a finite projection, where a projection is finite if it is not ``equivalent'' (to be precise:  \textbf{Murray-von Neumann equivalent}) to a proper subprojection.  A factor is finite if and only if it admits a trace, which is then necessarily unique.  A finite factor is of type I$_n$ if it is isomorphic to $M_n(\mathbb C)$, and otherwise it is said to be of type II$_1$.  We note that the class of II$_1$ factors is elementary in the language of tracial von Neumann algebras.  An infinite factor is either of type II$_\infty$, meaning that it is of the form $M\otimes B(H)$ for some II$_1$ factor $M$, or else of type III$_\lambda$ for some $\lambda\in [0,1]$, where the classification of type III factors (due to Connes) is determined by a particular dynamical system associated to the factor.

A von Neumann algebra $M$ is called \textbf{hyperfinite} if it is the SOT closure of an increasing sequence of subalgebras $M_1\subseteq M_2\subseteq \cdots$ with each $M_n$ a finite-dimensional matrix algebra.  There is a unique hyperfinite II$_1$ factor, denoted $\R$, which can be viewed as the direct limit (in the category of tracial von Neumann algebras) of the direct system of matrix algebras $M_{2^n}(\mathbb C)$, where the bonding maps are given by mapping $A\in M_{2^n}(\mathbb C)$ to $\left(\begin{matrix}A & 0\\ 0 & A\end{matrix}\right)\in M_{2^{n+1}}(\mathbb C)$.  (Note that this embedding preserves the normalized trace.)  We note that $\R$ embeds into every II$_1$ factor.  There is also a unique hyperfinite type III$_1$ factor, denoted $\R_\infty$, as well as, for each $\lambda\in (0,1)$, a unique hyperfinite type III$_\lambda$ factor, denoted $\R_\lambda$.  (On the other hand, there are continuum many nonisomorphic hyperfinite type III$_0$ factors.)

Given a (countable, discrete) group $G$, one can consider the left-regular representation $\lambda_G$ of $G$.  To define this, let $\ell^2(G)$ denote the Hilbert space of square-summable functions $f:G\to \mathbb{C}$ equipped with the inner product $\langle f_1,f_2\rangle:=\sum_{g\in G}f_1(g)\overline{f_2(g)}$.  We let $\delta_g\in \ell^2(G)$ denote the characteristic function of $\{g\}$; the $\delta_g$'s form an orthonormal basis for $\ell^2(G)$. For each $g\in G$, we have the unitary operator $u_g$ on $\ell^2(G)$ determined by $u_g(\delta_h):=\delta_{gh}$.  The unitary representation $\lambda_G:G\to U(\ell^2(G))$ is defined by $\lambda_G(g):=u_g$ and extends to a $*$-algebra homomorphism $\lambda_G:\mathbb{C}[G]\to \mathcal{B}(\ell^2(G))$ by linearity.  The operator norm closure of $\lambda_G(\bb C[G])$ is the \textbf{reduced group \cstar-algebra of $G$}, denoted $C^*_r(G)$, while the SOT closure of $\lambda_G(\bb C[G])$ is the \textbf{group von Neumann algebra of $G$}, denoted $L(G)$.\footnote{It is the group von Neumann algebra $L(\bb F_2)$ associated to the free group on two generators that Connes refers to in the quote from the introduction.}  One can also consider the \textbf{universal group \cstar-algebra of $G$}, denoted $C^*(G)$, defined by the universal property that any unitary representation $\pi:G\to U(H)$ extends to a $*$-homomorphism $\bar \pi:C^*(G)\to B(H)$.  The canonical map $C^*(G)\to C^*_r(G)$ is an isomorphism if and only if $G$ is amenable.  

\section{Applications to von Neumann algebras}

In this section, we explain how to use $\mre$ to give a G\"odelian refutation of the CEP as well as a handful of other undecidability results about von Neumann algebras.  We begin by carefully defining the CEP and explaining its connection with model theory.

\subsection{CEP and model theory}

The \textbf{Connes Embedding Problem (CEP)} is the statement that every tracial von Neumann algebra embeds into an ultrapower of the hyperfinite II$_1$ factor $\R$.  A basic result in (classical, discrete) model theory is that, given two structures $M$ and $N$ (in the same language), one has that $M$ embeds into an ultrapower of $N$ if and only if $M$ is a model of $\Th_\forall(N)$, the universal theory of $N$.\footnote{If $M$ is a countable model of $\Th_\forall(N)$, then any ultrapower of $N$ with respect to a nonprincipal ultrapower on $\bb N$ will contain a copy of $M$.  Otherwise, one needs to use an ultrapower with respect to a so-called \emph{good} ultrafilter.}  The corresponding result holds also in continuous logic provided that one defines the universal theory $\Th_\forall(N)$ of $N$ to be the set of non-negative universal sentences $\sigma$ for which $\sigma^N=0$\footnote{Writing $\sigma=\sup_x \theta(x)$, we see that $\sigma^N=0$ if and only if $\theta(a)=0$ for all $a\in N$, whence the ``condition'' $\sigma=0$ really is a universal statement.}.  Consequently, the CEP is the statement that every tracial von Neumann algebra is a model of the universal theory of $\R$.  Since $\R$ embeds into every II$_1$ factor and every tracial von Neumann algebra embeds into a II$_1$ factor, yet another formulation of the CEP is that all II$_1$ factors have the same universal theory, namely $\Th_\forall(\R)$.

In the sequel, it will behoove us to also consider a different perspective on the universal theory $\Th_\forall(M)$ of a metric structure $M$, namely as the function $\sigma\mapsto \sigma^M$ from the set of universal sentences (in the language of $M$) to the set of real numbers.  It is straightforward to see that these two perspectives on $\Th_\forall(M)$ carry the same information.

\subsection{CEP and computability theory}

A straightforward observation is that if $M$ is a \emph{computably presented}\footnote{We do not offer a precise definition here, but the point is that there is an enumeration of a dense subset of the structure on which the predicates applied to terms restrict to computable functions.} metric structure, then for any universal sentence $\sigma$ in the language for $M$, the value $\sigma^M$ is left c.e. uniformly in $\sigma$, in which case we say that $\Th_\forall(M)$ is \textbf{left c.e.}\footnote{Implicit here is the functional interpretation of the universal theory of a structure.}  Since $\R$ is computably presentable \cite[Remark 3.11]{hyper}, $\Th_\forall(\R)$ is left c.e.  It is natural to wonder if $\Th_\forall(\R)$ is actually computable, that is, if $\sigma^{\R}$ is computable uniformly in $\sigma$.

In \cite{compR}, with Hart, we connected this question to CEP:

\begin{thm}\label{CEPcomp}
If CEP has a positive solution, then $\Th_\forall(\R)$ is computable.
\end{thm}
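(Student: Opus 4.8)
The plan is to combine two facts: (1) $\Th_\forall(\R)$ is left c.e.\ (stated above, since $\R$ is computably presentable), and (2) under CEP, $\Th_\forall(\R)$ is also \emph{right} c.e., uniformly in $\sigma$. Since a real number (uniformly in a parameter) that is both left c.e.\ and right c.e.\ is computable, combining (1) and (2) yields the theorem. So the whole content is establishing (2).

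For (2), first recall that under CEP, every tracial von Neumann algebra is a model of $\Th_\forall(\R)$, and conversely $\Th_\forall(\R)$ is the \emph{smallest} universal theory among tracial von Neumann algebras (any model of it embeds in an ultrapower of $\R$, hence has the same or larger universal theory). Thus, for a universal sentence $\sigma = \sup_{\bar x}\theta(\bar x)$ (with $\theta$ quantifier-free), we have
\[
\sigma^{\R} \;=\; \sup\{\,\sigma^{(M,\tau)} \;:\; (M,\tau)\text{ a tracial von Neumann algebra}\,\}.
\]
Here one should be a touch careful: $\theta(\bar a)$ ranges over a sup, so $\sigma^{\R}=\sup_{(M,\tau)}\sigma^{(M,\tau)}$ requires CEP in exactly the direction that every $(M,\tau)$ satisfies $\sigma \le \sigma^{\R}$ pointwise, which is what CEP gives. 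Now the point is that this supremum over \emph{all} tracial von Neumann algebras can, by a reflection/downward-L\"owenheim--Skolem argument in continuous logic, be computed by ranging only over finitely generated tracial von Neumann algebras, and the behaviour of $\theta$ on a finite tuple of generators of operator-norm bound $1$ is governed by the (weak-$*$ compact) space of $\ast$-moments of such tuples — equivalently, by tracial states on the universal $\mathrm{C}^*$-algebra of the appropriate free group, or on a suitable universal unital $\ast$-algebra with a norm constraint. The key technical step is therefore: realize $\sigma^{\R}$ as the supremum of a continuous function $\theta$ over a computably-described compact convex set of states, and then produce a \emph{computable decreasing} sequence of upper bounds for that supremum.

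To get the decreasing upper bounds one runs the usual finitary duality/relaxation: truncate to $\ast$-polynomial moments of degree $\le d$ in $k$ generators, impose the finitely many positivity (positive-semidefiniteness of the moment matrix) and tracial constraints definable at that level — a semidefinite program — and take $\sup\theta$ over this relaxation; as $d\to\infty$ these relaxations decrease to $\sigma^{\R}$ by a Positivstellensatz/GNS compactness argument (the relaxation at level $d$ contains all genuine tracial von Neumann moments, and any limit point of relaxation-feasible moment sequences is realized by a genuine tracial state by GNS). Since each level-$d$ SDP has computable data and its optimal value can be approximated from above computably, this exhibits $\sigma^{\R}$ as right c.e., uniformly in $\sigma$. (Alternatively, and more in the spirit of the rest of the paper: once CEP holds, $\val^{co} = \val^* = \val^{qa}$ collapse, and one can piggyback on Proposition~\ref{qcrightce} — the quantum commuting value is right c.e.\ — after translating universal sentences over $\R$ into nonlocal-game values; but the direct SDP-hierarchy argument is cleaner to state.)

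The \textbf{main obstacle} is the passage from ``$\sigma^{\R}$ is a sup over an abstract class of von Neumann algebras'' to ``$\sigma^{\R}$ is a sup over an \emph{effectively enumerable, compact} moment space, computably from above.'' Two points need care: (a) justifying that the supremum is unchanged when restricted to finitely generated algebras with norm-bounded generators — this is where one uses that universal sentences only see finitely many variables, each living in a fixed operator-norm ball sort, together with a routine continuous-logic compactness argument; and (b) the convergence of the SDP hierarchy to the true value, i.e.\ that no ``spurious'' moment sequence survives in the limit — this is a GNS-representation argument showing every feasible limit moment sequence comes from an actual tracial von Neumann algebra, so that CEP again pins its value at $\le \sigma^{\R}$. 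Everything else (uniformity in $\sigma$, the fact that left c.e.\ $+$ right c.e.\ $=$ computable, computability of the SDP data) is bookkeeping.
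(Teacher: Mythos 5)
Your proposal is correct in outline and reaches the same conclusion, but the mechanism for establishing that $\sigma^{\mathcal R}$ is right c.e.\ is genuinely different from the paper's. You work semantically: you realize $\sigma^{\mathcal R}$ (under CEP) as a supremum over all tracial von Neumann algebras, then build a decreasing sequence of computable upper bounds by a Lasserre/Navascu\'es--Pironio--Ac\'in--style moment relaxation, with convergence justified by a tracial Positivstellensatz/GNS argument. That is precisely the style of argument the paper attributes to the \emph{standard} proof of Proposition~\ref{qcrightce} (right c.e.-ness of $\val^{co}$ via semidefinite programming). The paper instead takes a syntactic route which it explicitly flags as the point of the exercise: starting from $\sup\{\sigma^M : M \models T_{\mathrm{II}_1}\} = \sigma^{\mathcal R}$, it invokes the Ben Yaacov--Pedersen Completeness Theorem for continuous logic to rewrite this as $\inf\{r \in \mathbb Q^{>0} : T_{\mathrm{II}_1} \vdash \sigma \dotminus r\}$, and then right c.e.-ness is immediate from the fact that $T_{\mathrm{II}_1}$ is a c.e.\ axiom set and the consequence relation $\vdash$ is c.e. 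There is no moment hierarchy, no SDP, and no convergence argument to justify --- just the observation that provable inequalities can be effectively enumerated.

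What each approach buys: your SDP-hierarchy route is self-contained and does not invoke a proof system, but it requires you to discharge two nontrivial technical steps you yourself flag as the main obstacle: (a) the reduction to finitely many norm-bounded generators (routine but needs the right sort structure), and (b) a genuine Positivstellensatz-type theorem for the tracial moment problem guaranteeing that limits of feasible truncated moment sequences are realized by actual tracial von Neumann algebras, plus the fact that the connective $\theta$ (an arbitrary continuous combination of conditions, not a polynomial) can be effectively optimized over the relaxation. The paper's route sidesteps all of this: the Completeness Theorem does the work that your Positivstellensatz does, and the effectiveness is automatic from $T_{\mathrm{II}_1}$ being c.e. Moreover, the syntactic route immediately yields the stronger Corollary~\ref{godel1} (replace $T_{\mathrm{II}_1}$ by any c.e.\ $T$ with $\mathcal R \models T$ and all models of $T$ embeddable in $\mathcal R^{\mathcal U}$), which is the engine behind the ``G\"odelian'' refutations later in the paper; your argument would need to be re-run for each such $T$ and it is less obvious that the relaxation would remain effective. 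So while your approach is sound, the paper's is shorter, more general, and thematically central to what the article is doing.
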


The proof of Theorem \ref{CEPcomp} follows from a version of G\"odel's Completeness Theorem for continuous logic due to Ben Yaacov and Pederson \cite{BYP}.  Below, an $L$-theory $T$ is simply a set of $L$-sentences and the notation $M\models T$ means $M$ is an $L$-structure for which $\sigma^M=0$\footnote{The choice of $0$ here is simply a convenient default value.  Given the fact that any continuous function is a connective, one can recover other values of sentences using zerosets.} for all $\sigma\in T$.

\begin{thm}[Completeness theorem for continuous logic]\label{completeness}
Suppose that $T$ is an $L$-theory.  Then for any $L$-sentence $\sigma$, we have
$$\sup\{\sigma^M \ : \ M\models T\}=\inf\{r\in \mathbb{Q}^{>0} \ : \ T \vdash \sigma \dotminus r\}.$$
Here, $\vdash$ denotes the provability relation for a particular proof system for continuous logic.
\end{thm}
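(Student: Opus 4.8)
The plan is to establish the two inequalities separately; the inequality $\le$ is a routine soundness argument, while $\ge$ is the substantive direction and reduces to the model existence theorem for continuous logic.

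For $\le$: one verifies, by induction on the length of a derivation, that every axiom scheme and rule of the proof system is valid, meaning that it preserves the property of having value $0$ in every $L$-structure. Hence whenever $T\vdash\sigma\dotminus r$ we have $(\sigma\dotminus r)^M=0$ for every $M\models T$, i.e.\ $\sigma^M\le r$; taking the supremum over such $M$ and then the infimum over the admissible rationals $r$ yields
$$\sup\{\sigma^M \ : \ M\models T\}\ \le\ \inf\{r\in\mathbb{Q}^{>0} \ : \ T\vdash\sigma\dotminus r\}.$$
(We tacitly assume $T$ is satisfiable; otherwise $T$ is inconsistent and proves every condition, so the right side is $0$ and equality holds under the convention $\sup\emptyset=0$.)

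For $\ge$, write $I$ for the right-hand side and fix a rational $q$ with $0<q<I$; it suffices to produce a model $M\models T$ with $\sigma^M\ge q$, since then letting $q\uparrow I$ gives $\sup\{\sigma^M:M\models T\}\ge I$. Let $T_q$ be $T$ together with the condition ``$\sigma\ge q$'', that is, $q\dotminus\sigma=0$, and suppose toward a contradiction that $T_q$ is inconsistent in the proof system. The deduction metatheorem for continuous logic then converts a derivation of a contradiction from the added hypothesis $q\dotminus\sigma=0$ into a derivation, from $T$ alone, that $q\dotminus\sigma\ge\eta$ for some positive rational $\eta<q$; equivalently $T\vdash\sigma\dotminus(q-\eta)$, so $I\le q-\eta<q$, contradicting $q<I$. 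Hence $T_q$ is consistent, and the model existence theorem furnishes a model $M\models T_q$, which models $T$ and has $\sigma^M\ge q$. Combining the two inequalities proves the theorem.

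The one genuinely substantial ingredient --- and the step I expect to be the main obstacle --- is the \textbf{model existence theorem}: every consistent $L$-theory has a model. This is the heart of \cite{BYP} and is proved by a Henkin-style construction. One adjoins countably many new constant symbols and, by a Lindenbaum-style argument, extends the given consistent theory to a complete consistent theory that in addition possesses \emph{approximate} Henkin witnesses: for each formula $\varphi(x)$ and each $\delta>0$ a new constant $c$ with $\varphi(c)$ provably within $\delta$ of $\inf_x\varphi(x)$. One then forms the prestructure on the set of closed terms --- with the pseudometric and the values of the function and predicate symbols read off from the complete theory --- passes to the metric quotient, and completes; the built-in uniform-continuity requirements ensure that the operations and predicates pass to the quotient and extend to its completion. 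The crux is the truth lemma, that the value of each sentence in the resulting structure agrees with the value assigned to it by the complete theory, proved by induction on formulas, with the connective and (via the approximate witnesses together with a continuity-and-density argument) quantifier cases being exactly where one departs from the classical Henkin proof. The many $\delta$-approximations --- in the witnesses, in the metric completion, and in the truth lemma --- have to be organized so that they do not accumulate uncontrollably, which is what makes the argument delicate. This genuinely approximate character --- traceable already to the finiteness of proofs, since a condition can follow from $T$ while being provable only up to $\varepsilon$ from any finite subset of $T$, so that $T\vdash\sigma\dotminus r$ may hold for every rational $r$ above $\sup\{\sigma^M:M\models T\}$ yet fail at the supremum itself --- is exactly why the theorem is stated with $\sigma\dotminus r$ and an infimum over rationals.
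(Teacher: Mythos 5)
The paper does not prove this theorem; it is cited as a black box from Ben Yaacov and Pedersen \cite{BYP}, so there is no in-paper proof to compare against. Your sketch is a correct reconstruction of the standard argument and follows the same overall shape as \cite{BYP}: soundness gives the inequality $\le$; the inequality $\ge$ reduces to model existence (``every consistent theory has a model''), which in turn is proved by a Henkin-style construction with \emph{approximate} witnesses, a prestructure on closed terms, metric quotient and completion, and a truth lemma proved by induction on formulas. You also correctly identify the genuinely delicate points: the approximate character of the witnesses forced by the finiteness of proofs, the need to control the accumulation of $\delta$'s through the truth lemma, and the observation that $T\vdash\sigma\dotminus r$ may hold for every rational $r$ strictly above the supremum yet fail at the supremum, which is exactly why the statement is phrased with $\dotminus r$ and an infimum over rationals rather than as an exact equality at a single sentence. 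The one step to be careful about if you were to write this out in full is your reduction from inconsistency of $T\cup\{q\dotminus\sigma\}$ to $T\vdash\sigma\dotminus(q-\eta)$: continuous logic has no classical negation, so a ``deduction metatheorem'' does not come for free, and the exact form it takes in \cite{BYP} (and the bookkeeping of the $\eta$) should be checked against their development rather than asserted as routine. That said, this is a gap in the sketch's level of detail, not a wrong idea.
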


In connection with the previous theorem, we note that, as for any reasonable proof system, if $T$ is a c.e. set of $L$-sentences, then the set of sentences derivable from $T$ is also c.e.  

To prove Theorem \ref{CEPcomp}, it suffices to show that $\sigma^\R$ is right c.e. uniformly in $\sigma$.  Note that if CEP holds, then $\sup\{\sigma^M \ : \ M\models T_{II_1}\}=\sigma^\R$, where $T_{II_1}$ is the set of axioms for being a II$_1$ factor.  Consequently, $\sigma^\R=\inf\{r\in \mathbb{Q}^{>0} \ : \ T_{II_1}\vdash \sigma\dotminus r\}$.  Since $T_{II_1}$ is a c.e. set of sentences, the set we are taking the infimum over is c.e., whence it follows that $\sigma^\R$ is right c.e. uniformly in $\sigma$, finishing the proof of Theorem \ref{CEPcomp}.

Note that the above proof yields an immediate strengthening of Theorem \ref{CEPcomp}, to be used in the next section:

\begin{cor}\label{godel1}
Suppose there exists a c.e. set $T$ of sentences in the language of tracial von Neumann algebras for which:
\begin{enumerate}
    \item $\R\models T$.
    \item If $M\models T$, then $M$ embeds into $\R^\u$.
\end{enumerate}
Then $\Th_\forall(\R)$ is computable.
\end{cor}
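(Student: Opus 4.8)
The plan is to run the proof of Theorem \ref{CEPcomp} essentially verbatim, with the given c.e.\ theory $T$ in the role played there by the axioms $T_{II_1}$ of II$_1$ factors, and with hypotheses (1) and (2) replacing the full strength of CEP. Since $\R$ is computably presentable \cite[Remark 3.11]{hyper}, for every universal sentence $\sigma$ the value $\sigma^{\R}$ is left c.e., uniformly in $\sigma$; hence it suffices to show that $\sigma^{\R}$ is also right c.e., uniformly in $\sigma$.

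The key identity I would establish is
$$\sup\{\sigma^M \ : \ M\models T\}=\sigma^{\R}$$
for every universal sentence $\sigma$. The inequality $\geq$ is immediate from hypothesis (1), since $\R$ is one of the structures $M$ being ranged over. For $\leq$, let $M\models T$; by hypothesis (2), $M$ embeds into $\R^{\u}$, and universal sentences are monotone under embeddings of metric structures — writing $\sigma=\sup_{\bar x}\theta(\bar x)$ with $\theta$ quantifier-free, the supremum defining $\sigma^M$ ranges over (the image of) a subset of $\R^{\u}$ on which $\theta$ takes the same values, so $\sigma^M\leq \sigma^{\R^{\u}}$. Finally $\sigma^{\R^{\u}}=\sigma^{\R}$, since an ultrapower has the same complete theory as the base structure. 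Thus $\sigma^M\leq\sigma^{\R}$ for all $M\models T$, which yields the identity.

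Next I would invoke the Completeness theorem for continuous logic (Theorem \ref{completeness}) for the theory $T$, rewriting the left-hand side of the identity as $\inf\{r\in\mathbb{Q}^{>0} \ : \ T\vdash \sigma\dotminus r\}$. Because $T$ is c.e., the set of sentences derivable from $T$ is c.e., so this infimum is taken over a c.e.\ set of rationals, uniformly in $\sigma$; hence $\sigma^{\R}$ is right c.e.\ uniformly in $\sigma$. Together with left c.e.-ness, $\sigma^{\R}$ is computable uniformly in $\sigma$, i.e.\ $\Th_\forall(\R)$ is computable. There is no genuine obstacle here beyond the bookkeeping already carried out for Theorem \ref{CEPcomp}; the one point requiring a little care is the uniformity in $\sigma$ — one wants the \emph{function} $\sigma\mapsto\sigma^{\R}$ to be computable, not merely each individual value — but this follows, exactly as before, from the uniform c.e.-ness of the proof system together with the uniformity built into the computable presentation of $\R$.
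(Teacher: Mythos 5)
Your proof is correct and follows precisely the paper's intended route: the paper presents this corollary as an immediate strengthening of Theorem \ref{CEPcomp}, obtained by rerunning that argument with the hypothesized c.e.\ theory $T$ replacing $T_{II_1}$ and hypotheses (1) and (2) replacing the full strength of CEP. You have simply spelled out the two inequalities in the key identity $\sup\{\sigma^M : M\models T\}=\sigma^{\R}$ (via $\R\models T$ and monotonicity of universal sentences under embeddings into the ultrapower) before invoking the Completeness theorem as the paper does.
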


\subsection{A G\"odelian refutation of CEP}

Given Theorem \ref{CEPcomp}, in order to refute CEP, it suffices to show that $\Th_\forall(\R)$ is not computable.  Of course, we will use Theorme \ref{mip}; but how?  We somehow need a way to view the quantum value of a nonlocal game as the interpretation of a universal sentence in $\R$.  It turns out that this is indeed possible once we make a slight change of setting to that of synchronous quantum strategies.

Given a correlation $p\in [0,1]^{k^2n^2}$, we say that $p$ is \textbf{synchronous} if, for all $x\in [k]$ and \emph{distinct} $a,b\in [n]$, we have $p(a,b|x,x)=0$.  In the language of nonlocal games, a correlation is synchronous if the probability that Alice and Bob answer the same question with different answers is $0$.  Given $t\in \{q,qa,qc\}$, we let $C_t^s(k,n)$ denote the elements of $C_t(k,n)$ that are synchronous.  

The following result of Paulsen et. al. \cite[Corollary 5.6]{quantum} shows that one can capture synchronous strategies using operator algebraic notions.  First, given a \cstar-algebra $A$, a \textbf{projection valued measure (PVM) of length $n$ in $A$} is a tuple $(e_1,\ldots,e_n)$ of projections\footnote{A projection in a \cstar-algebra is an element $e$ for which $e^*=e^2=e$.  In the concrete picture, they correspond to orthogonal projection operators onto closed subspaces of the ambient Hilbert space.} from $A$ such that $\sum_{a=1}^n e_a=1$.\footnote{Note that projections are positive elements, whence any PVM is a POVM once the algebra has been concretely represented}.  We let $X(n,A)$ denote the set of tuples $(e_1,\ldots,e_n)\in A^n$ that are PVMs in $A$.

\begin{thm}\label{oasync}
Let $p\in [0,1]^{k^2n^2}$.  Then $p\in C_{qc}^s(k,n)$ if and only if there is a \cstar-algebra $A$ equipped with a tracial state $\tau$ and $(e^1,\ldots,e^k)\in X(n,A)^k$ such that, writing $e^x=(e^x_1,\ldots,e^x_n)$, we have $p(a,b|x,y)=\tau(e^x_ae^y_b)$ for all $x,y\in [k]$ and $a,b\in [n]$.  Moreover, $p\in C_q(k,n)$ if and only if, in the previous sentence, $A$ can be taken to be finite-dimensional.
\end{thm}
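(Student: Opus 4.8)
## Proof proposal for Theorem \ref{oasync}

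The plan is to prove the two directions of the main equivalence (the $C_{qc}^s$ characterization) separately, and then obtain the finite-dimensional refinement essentially as a byproduct of the argument. For the ``hard'' direction---given $p \in C_{qc}^s(k,n)$, produce a tracial \cstar-algebra and PVMs---I would start from a quantum commuting representation $p(a,b|x,y) = \langle A^x_a B^y_b \xi, \xi\rangle$ with commuting POVMs $\vec A^x, \vec B^y$ on a Hilbert space $H$ and a unit vector $\xi$. The first key step is to upgrade the POVMs to PVMs: this is where synchronicity is essential. The standard argument (going back to Paulsen--Severini--Stahlke--Todorov--Winter, and the references in \cite{quantum}) shows that synchronicity forces the vector state $\tau(\cdot) := \langle \cdot\, \xi, \xi\rangle$, restricted to the von Neumann algebra generated by the $A^x_a$, to be a trace, and moreover that tracial-state considerations force each positive operator $A^x_a$ to actually be a projection. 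Concretely, one uses that $\tau(A^x_a) = p(a,a|x,x)$, that $\sum_a p(a,a|x,x) = 1$, and that $\sum_a \tau((A^x_a)^2) \le \sum_a \tau(A^x_a) = 1$ with equality forced by a Cauchy--Schwarz computation; equality in Cauchy--Schwarz then yields $A^x_a \xi = (A^x_a)^2 \xi$ for the relevant vectors, and a GNS-type argument promotes this to $A^x_a = (A^x_a)^2$ as elements of the relevant algebra. Then one lets $A$ be the \cstar-algebra generated by all the $A^x_a$ and $B^y_b$ inside $B(H)$ (or a suitable quotient), with $\tau$ the vector state; because the $A^x_a$ commute with the $B^y_b$ and $\tau$ is a trace, one can replace the $B^y_b$ by the $A^y_b$ themselves (the commuting copy becomes redundant once we have a trace), arriving at a single PVM family $(e^1,\dots,e^k) \in X(n,A)^k$ with $\tau(e^x_a e^y_b) = p(a,b|x,y)$.

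For the easy direction, given a \cstar-algebra $A$ with tracial state $\tau$ and PVMs $(e^1,\dots,e^k)$ with $p(a,b|x,y) = \tau(e^x_a e^y_b)$, I would first pass to the GNS representation $\pi_\tau : A \to B(H_\tau)$ with cyclic vector $\hat 1$, so $p(a,b|x,y) = \langle \pi_\tau(e^x_a)\pi_\tau(e^y_b)\hat 1, \hat 1\rangle$. This is not yet a quantum commuting correlation because $\pi_\tau(e^x_a)$ need not commute with $\pi_\tau(e^y_b)$. The fix is the standard trick: use the trace to manufacture a commuting copy. Let the von Neumann algebra $M := \pi_\tau(A)''$ act on $H_\tau = L^2(M,\tau)$; then $M$ acting by left multiplication and $M^{\op}$ acting by right multiplication commute, and $\langle x \cdot J y^* J \, \hat 1, \hat 1\rangle = \tau(xy)$ (for suitable $x,y$), where $J$ is the modular conjugation. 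Taking $A^x_a := \pi_\tau(e^x_a)$ (left multiplication) and $B^y_b := J\pi_\tau(e^y_b)^* J = J\pi_\tau(e^y_b)J$ (right multiplication by the self-adjoint projection $e^y_b$) gives commuting PVMs on $H_\tau$ with $\langle A^x_a B^y_b \hat 1, \hat 1\rangle = \tau(e^x_a e^y_b) = p(a,b|x,y)$. Since PVMs are in particular POVMs, this exhibits $p \in C_{qc}^s(k,n)$; synchronicity is immediate from $\tau(e^x_a e^x_b) = 0$ for $a \ne b$, which holds because $e^x_a e^x_b = 0$ when $a \ne b$ (orthogonal projections summing to $1$).

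For the finite-dimensional refinement: if $p \in C_q(k,n)$ then, after restricting to synchronous correlations as in the theorem's hypothesis, the Hilbert spaces $H_A, H_B$ are finite-dimensional, so the \cstar-algebra generated by the operators in the ``hard direction'' argument is a finite-dimensional \cstar-algebra (a finite direct sum of matrix algebras), and the construction above produces such an $A$. Conversely, if $A$ can be taken finite-dimensional, then $H_\tau$ is finite-dimensional and the commuting-copy construction in the easy direction lives on a finite-dimensional Hilbert space, which moreover splits as a tensor product $M \otimes M^{\op}$-type decomposition exhibiting $p$ as an honest element of $C_q(k,n)$ (a finite-dimensional tracial von Neumann algebra is a direct sum of matrix algebras, and on each summand $M_d(\bC)$ one has $L^2 \cong \bC^d \otimes \bC^d$ with left and right multiplication realized as $\cdot \otimes I$ and $I \otimes \cdot$). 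I would organize this so that the finite-dimensionality is visibly preserved at each step rather than argued separately.

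The main obstacle, and the step deserving the most care, is the promotion of POVMs to PVMs in the hard direction: the synchronicity hypothesis must be used exactly once, in the right place, via the Cauchy--Schwarz equality condition, and one must be careful that the equality $A^x_a \xi = (A^x_a)^2\xi$ on vectors suffices to conclude $A^x_a$ is a projection in the algebra one ultimately uses---this is cleanest if one first passes to the GNS construction of the vector state and works there, where $\xi$ is cyclic and separating for the relevant von Neumann algebra. Everything else (the commuting-copy construction, the finite-dimensional bookkeeping) is standard modular-theory/finite-dimensional linear algebra. Since the theorem is quoted from \cite[Corollary 5.6]{quantum}, I would in practice cite that source for the delicate step and only sketch the GNS/modular-conjugation packaging needed to state it in the form above.
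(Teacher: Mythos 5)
The paper does not prove Theorem~\ref{oasync}; it is quoted directly from Paulsen--Severini--Stahlke--Todorov--Winter \cite[Corollary 5.6]{quantum} with no proof given. So there is no ``paper's own proof'' to compare against; what you have written is a reconstruction of the standard argument from the cited source, and it is essentially correct. The three ingredients you identify --- (i) using synchronicity plus Cauchy--Schwarz/positivity to force $A^x_a\xi=B^x_a\xi$, conclude $\tau$ is tracial on the algebra generated by the $A^x_a$, and promote the $A^x_a$ to projections in the GNS image; (ii) the GNS-plus-modular-conjugation trick $B^y_b:=J\pi_\tau(e^y_b)J$ for the converse; and (iii) tracking finite-dimensionality through both constructions --- are exactly what appears in that reference. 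One small point you gloss over in step (i): verifying the full trace property $\tau(w_1 w_2)=\tau(w_2 w_1)$ for arbitrary words $w_1,w_2$ in the $A^x_a$ (not just products of two generators) requires a short induction pushing $B$'s across $A$'s and using $A^x_a\xi=B^x_a\xi$ repeatedly, which is worth spelling out since it is the actual content of the argument. And in step (iii) the finite-dimensional $L^2(A,\tau)$ is a \emph{direct sum} $\bigoplus_i \mathbb C^{d_i}\otimes\mathbb C^{d_i}$, not itself a tensor product, so to land honestly in $C_q(k,n)$ you should embed it diagonally into $\mathbb C^D\otimes\mathbb C^D$ with $D=\sum_i d_i$ and take $\xi=\bigoplus_i\sqrt{\lambda_i}\,\xi_i$ with $\xi_i$ the normalized maximally entangled vector on the $i^{\text{th}}$ block; your parenthetical remark acknowledges this but stops short of the required padding. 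Neither of these is a gap in the approach, just in the level of detail.
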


Let $\varphi_n(x_1,\ldots,x_n)$ be the following formula in the language of tracial von Neumann algebras:  
$$\max\left(\max_{1\leq i\leq n}(\max(d(x_i^*,x_i),d(x_i^2,x_i))),\left| \sum_{i=1}^n x_i-1\right|\right).$$  Note that the zeroset of $\varphi_n$ in any tracial von Neumann algebra $M$ is the set $X(n,M)$.  The following lemma of Kim, Paulsen, and Schafhauser \cite[Lemma 3.5]{KPS} is crucial to what follows:

\begin{lem}\label{PVMdefinability}
For each $n\geq 1$ and $\epsilon>0$, there is $\delta=\delta(n,\epsilon)>0$ such that, for all $d\geq 2$ and all tuples $\vec a=(a_1,\ldots,a_n)\in M_d(\bb C)^n$ whose entries are positive contractions\footnote{A contraction is simply an element of operator norm at most $1$.} and for which $\varphi^{M_d(\bb C)}(\vec a)<\delta$, there is $(b_1,\ldots,b_n)\in X(n,M_d(\bb C))$ such that $d(a_i,b_i)<\epsilon$ for all $i=1,\ldots,n$.
\end{lem}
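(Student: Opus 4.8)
The plan is to move from the near-PVM $\vec a$ to an exact PVM in three stages --- self-adjointization, spectral truncation, and a Gram--Schmidt-type orthogonalization --- keeping the $\|\cdot\|_2$-error at each stage bounded by an explicit function of $n$ alone, so that the dimension $d$ never enters. \emph{Step 1: self-adjointize.} I would first replace each $a_i$ by $c_i := \tfrac12(a_i + a_i^*)$. Since the adjoint of a positive contraction is again a positive contraction, $c_i$ is a self-adjoint positive contraction with $\|a_i - c_i\|_2 < \delta/2$, and the H\"older bounds $\|xy\|_2 \le \|x\|\,\|y\|_2$ and $\|xy\|_2 \le \|x\|_2\,\|y\|$ propagate the hypotheses to $\|c_i^2 - c_i\|_2 = O(\delta)$ and $\|\sum_i c_i - 1\|_2 = O(n\delta)$ with absolute implied constants.

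\emph{Step 2: truncate.} Let $p_i := \chi_{[1/2,1]}(c_i)$, the spectral projection of $c_i$ for $[1/2,1]$, and let $\mu_i$ be the distribution of $c_i$ under $\tau$, a probability measure on $[0,1]$. Then $\|c_i - p_i\|_2^2 = \int \min(t,1-t)^2\,d\mu_i$ and $\|c_i^2 - c_i\|_2^2 = \int t^2(1-t)^2\,d\mu_i$; since $t(1-t) \ge \tfrac12\min(t,1-t)$ on $[0,1]$, this gives $\|c_i - p_i\|_2 \le 2\|c_i^2 - c_i\|_2 = O(\delta)$, so each $p_i$ is an honest projection near $a_i$ and $\|\sum_i p_i - 1\|_2 = O(n\delta)$. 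I also need near-orthogonality: from $\sum_{j\ne i}\tau(c_ic_j) = \tau\!\big(c_i(\sum_j c_j - 1)\big) + \tau(c_i - c_i^2)$, together with $0 \le c_i(1-c_i)$, the inequality $\tau(z) \le \|z\|_2$ for $z \ge 0$, and $c_j^2 \le c_j$, one gets $\|c_ic_j\|_2^2 \le \tau(c_ic_j) = O(n\delta)$ for $i \ne j$, hence $\|p_ip_j\|_2 = O(\sqrt{n\delta})$.

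\emph{Step 3: orthogonalize --- the crux.} I now have projections $p_1,\dots,p_n$ with $\|p_ip_j\|_2 = O(\sqrt{n\delta})$ for $i\ne j$ and $\|\sum_i p_i - 1\|_2 = O(n\delta)$, and I produce an exact PVM by induction. Put $q_1 := p_1$; given mutually orthogonal projections $q_1,\dots,q_{i-1}$ with $f_{i-1} := q_1+\cdots+q_{i-1}$, set $r_i := (1-f_{i-1})\,p_i\,(1-f_{i-1})$, a self-adjoint positive contraction satisfying $r_i = (1-f_{i-1})r_i$. Then $\|p_i - r_i\|_2 \le 2\|f_{i-1}p_i\|_2 \le 2\sum_{j<i}\big(\|p_jp_i\|_2 + \|q_j - p_j\|_2\big)$, so inductively $\|p_i - r_i\|_2 \le C(n)\sqrt\delta$; hence $\|r_i^2 - r_i\|_2 \le 3\|r_i - p_i\|_2$ is of the same order, and $q_i := \chi_{[1/2,1]}(r_i)$ is a projection with $\|q_i - r_i\|_2 \le 2\|r_i^2 - r_i\|_2$ which, since $r_i = (1-f_{i-1})r_i$ forces every nonzero spectral projection of $r_i$ to lie under $1 - f_{i-1}$, satisfies $q_i \le 1 - f_{i-1}$ and is therefore orthogonal to $q_1,\dots,q_{i-1}$. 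This closes the induction with $\|q_i - p_i\|_2 \le C'(n)\sqrt\delta$. Finally $f_n := \sum_i q_i$ is a projection with $\|1 - f_n\|_2 = O\!\big(C'(n)\sqrt\delta\big)$; since $1 - f_n$ is orthogonal to $q_i$ for every $i$, setting $b_i := q_i$ for $i < n$ and $b_n := q_n + (1 - f_n)$ gives $(b_1,\dots,b_n) \in X(n, M_d(\mathbb{C}))$ with $\|a_i - b_i\|_2 \le C''(n)\sqrt\delta$, and it then suffices to choose $\delta = \delta(n,\epsilon)$ so small that $C''(n)\sqrt\delta < \epsilon$.

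The main obstacle is Step 3: the Gram--Schmidt recursion amplifies the error by an $n$-dependent factor at each of its $n$ stages, so one must check that the final bound is still a fixed function of $n$ with no dependence on $d$. This holds because every estimate used --- the $L^2$--$L^\infty$ H\"older inequalities, the bound $\|\cdot\| \le 1$ for projections and contractions, the trace inequality $\tau(z) \le \|z\|_2$ for positive $z$, and the scalar inequality behind $\|c - \chi_{[1/2,1]}(c)\|_2 \le 2\|c^2 - c\|_2$ --- is entirely insensitive to the dimension.
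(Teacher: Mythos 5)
The paper does not reproduce a proof of this lemma; it cites it as Lemma~3.5 of Kim--Paulsen--Schafhauser \cite{KPS}, so there is no in-paper argument to compare against. Judging your proposal on its own merits, the argument is correct, and the three-stage scheme (spectral truncation to honest projections, establishing $\|p_ip_j\|_2 = O(\sqrt{\delta})$, then an iterative ``corner-and-retruncate'' Gram--Schmidt to produce an exact PVM) is a sound and self-contained route that yields a concrete, dimension-independent $\delta(n,\epsilon)$, as the paper's application requires. A few remarks. First, Step~1 is vacuous: the hypothesis already stipulates that the $a_i$ are positive contractions, and positive elements of a $\mathrm{C}^*$-algebra are self-adjoint, so $c_i = a_i$; the clause $d(x_i^*,x_i)$ in $\varphi_n$ is there for the formula to make sense on arbitrary inputs, but it contributes nothing under the stated hypotheses. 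Second, your key inequalities all check out: the grouping $p_i - r_i = f_{i-1}p_i(1-f_{i-1}) + p_if_{i-1}$ does give the factor $2$ rather than $3$; the chain $\|c_ic_j\|_2^2 = \tau(c_ic_j^2c_i) \le \tau(c_ic_jc_i) = \tau(c_i^2c_j) \le \tau(c_ic_j)$ correctly uses $c_k^2\le c_k$ and operator monotonicity under the trace; and the scalar bound $\min(t,1-t)\le 2t(1-t)$ on $[0,1]$ gives $\|c-\chi_{[1/2,1]}(c)\|_2\le 2\|c^2-c\|_2$ for any positive contraction, which you apply both to $c_i$ and to $r_i$. Third, the recursion $\alpha_i \le 14(i-1)\max_j\|p_jp_i\|_2 + 14\sum_{j<i}\alpha_j$ does produce an $n$-dependent (indeed exponential in $n$) constant, but since the target is merely a function $\delta(n,\epsilon)$, this is acceptable; it is worth noting explicitly, as the paper later needs this $\delta$ to be computable, and your recursion manifestly gives a computable bound.
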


Note that it follows immediately that the conclusion of the previous lemma also holds with $M_k(\bb C)$ replaced by $\R$.  In model-theoretic terminology, this says that each $X(n,\R)$ is a \textbf{definable} subset of $\R^n$.  (For more on definability in continuous logic, which is a more subtle notion than its classical counterpart, we refer the reader to \cite{spectral}.)  Another expression of the definability of $X(n,\R)$ is that $X(n,\R^\u)=X(n,\R)^\u$ for all $n\geq 2$ and all ultrafilters $\u$.

We now consider the set of ``$\R$-correlations'' defined by $$C_\R(k,n):=\{p\in [0,1]^{k^2n^2} \ : \ p(a,b|x,y)=\tau_\R(e^x_ae^y_b) \text{ for some }(e^1,\ldots,e^k)\in X(n,\R)^k\}$$ and define $C_{\R^\u}(k,n)$ in the analogous manner.  An immediate consequence of the above discussion is that $C_{\R^\u}(k,n)=\overline{C_\R(k,n)}$.  This common set of correlations also coincides with the closure of the set of synchronous quantum correlations: 

\begin{lem}\label{Rlemma}
For all $k,n\geq 2$, we have $\overline{C_q^s(k,n)}=\overline{C_\R(k,n)}$.
\end{lem}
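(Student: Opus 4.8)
The plan is to prove both inclusions $\overline{C_q^s(k,n)}\subseteq\overline{C_\R(k,n)}$ and $\overline{C_\R(k,n)}\subseteq\overline{C_q^s(k,n)}$, each of which reduces, after passing to closures, to a statement about the dense subsets $C_q^s(k,n)$ and $C_\R(k,n)$ themselves.

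First I would handle $C_\R(k,n)\subseteq\overline{C_q^s(k,n)}$. Take $p\in C_\R(k,n)$, so $p(a,b|x,y)=\tau_\R(e^x_ae^y_b)$ for some $(e^1,\ldots,e^k)\in X(n,\R)^k$. Since $\R$ is the SOT-closure of an increasing union of finite-dimensional matrix algebras $M_{2^m}(\bb C)$, one can approximate each projection $e^x_a$ in $\|\cdot\|_2$ by elements of some $M_{2^m}(\bb C)$; those approximants are positive contractions that come close to satisfying the PVM relations, so by Lemma~\ref{PVMdefinability} (applied in $M_{2^m}(\bb C)$) they can be perturbed to genuine PVMs $(\tilde e^1,\ldots,\tilde e^k)\in X(n,M_{2^m}(\bb C))^k$ with $\|e^x_a-\tilde e^x_a\|_2$ small. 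By the moreover clause of Theorem~\ref{oasync}, the correlation $q(a,b|x,y):=\tau_{M_{2^m}(\bb C)}(\tilde e^x_a\tilde e^y_b)$ lies in $C_q(k,n)$, and it is synchronous because each $\tilde e^x$ is a PVM (so $\tilde e^x_a\tilde e^x_b=0$ for $a\neq b$, hence $\tau(\tilde e^x_a\tilde e^x_b)=0$). A standard Cauchy--Schwarz estimate shows $|p(a,b|x,y)-q(a,b|x,y)|$ is controlled by the $\|\cdot\|_2$-perturbations, so $p$ is in the closure of $C_q^s(k,n)$.

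Next, the reverse inclusion $C_q^s(k,n)\subseteq\overline{C_\R(k,n)}$. Take $p\in C_q^s(k,n)$. By the moreover clauses of Theorem~\ref{oasync}, $p(a,b|x,y)=\tau(e^x_ae^y_b)$ for some finite-dimensional \cstar-algebra $A$ with tracial state $\tau$ and PVMs $(e^1,\ldots,e^k)\in X(n,A)^k$. A finite-dimensional \cstar-algebra with a tracial state is a finite direct sum of matrix algebras equipped with a convex combination of normalized traces; its GNS closure is a finite-dimensional tracial von Neumann algebra, which embeds trace-preservingly into $\R$ (every finite-dimensional tracial von Neumann algebra does, since $\R$ is the hyperfinite II$_1$ factor and absorbs all finite-dimensional subalgebras — concretely, amplify and use the standard embedding into the matrix direct limit defining $\R$, adjusting for the weights in the trace, possibly after approximating the weights by dyadic rationals). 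Pushing the PVMs $e^x$ forward along this embedding gives $(f^1,\ldots,f^k)\in X(n,\R)^k$ with $\tau_\R(f^x_af^y_b)=\tau(e^x_ae^y_b)=p(a,b|x,y)$, so $p\in C_\R(k,n)\subseteq\overline{C_\R(k,n)}$.

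The step I expect to be the main obstacle is the trace-weight bookkeeping in the second inclusion: a general tracial state on a finite-dimensional \cstar-algebra $\bigoplus_j M_{d_j}(\bb C)$ has the form $\sum_j \lambda_j\,\tfrac1{d_j}\Tr_j$ with $\lambda_j\geq 0$, $\sum_j\lambda_j=1$, and these $\lambda_j$ need not be dyadic (or even rational), so the resulting tracial von Neumann algebra need not embed \emph{exactly} into the dyadic matrix direct limit model of $\R$. This is harmless because we are taking closures: approximating the $\lambda_j$ by dyadic rationals perturbs the trace, hence $p$, by an arbitrarily small amount, and the perturbed correlation does embed into $\R$; alternatively one invokes the fact that $\R$ contains \emph{every} finite von Neumann algebra with separable predual as a trace-preserving subalgebra, which settles it outright. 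Modulo this routine approximation, both inclusions follow, and taking closures of the chain $C_\R(k,n)\subseteq\overline{C_q^s(k,n)}$ and $C_q^s(k,n)\subseteq\overline{C_\R(k,n)}$ yields $\overline{C_\R(k,n)}=\overline{C_q^s(k,n)}$.
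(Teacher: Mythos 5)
Your proof is correct and takes essentially the same route as the paper: one direction uses Theorem~\ref{oasync} plus the GNS closure plus the fact that finite-dimensional tracial von Neumann algebras embed trace-preservingly into $\R$, and the other uses SOT-density of the matrix algebras in $\R$ together with Lemma~\ref{PVMdefinability} to round $\|\cdot\|_2$-approximants to genuine matrix PVMs. One caution about your aside: the claim that ``$\R$ contains every finite von Neumann algebra with separable predual as a trace-preserving subalgebra'' is false --- any von Neumann subalgebra of $\R$ is hyperfinite, so e.g.\ $L(\bb F_2)$ does not embed; what is true (and all you need) is that every \emph{finite-dimensional} tracial von Neumann algebra embeds trace-preservingly into $\R$, which follows by cutting $\R$ by projections of the appropriate traces (no dyadic approximation required, since $\R$ is a II$_1$ factor and hence has projections of every trace in $[0,1]$).
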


\begin{proof}
If $p\in C_q^s(k,n)$, then there is a finite-dimensional \cstar-algebra $A$ equipped with a tracial state $\tau_A$ and $(e^1,\ldots,e^k)\in X(n,A)^k$ such that $p(a,b|x,y)=\tau_A(e^x_ae^y_b)$.  By passing to the GNS closure of $A$, we may assume that $A$ is actually a von Neumann algebra.  However, any finite-dimensional tracial von Neumann algebra tracially embeds in $\R$, whence $p\in C_\R(k,n)$.  The other inclusion follows immediately from Lemma \ref{PVMdefinability}.  To see this, take $p\in C_\R(k,n)$; we seek $p'\in C_q^s(k,n)$ such that $\|p-p'\|_\infty<\epsilon$.  Take $(e^1,\ldots,e^k)\in X(n,\R)^k$ witnessing that $p\in C_\R(k,n)$.  Fix $\eta>0$ sufficiently small.  Take $d\geq 2$ sufficiently large so that there are positive contractions $f^x_a\in M_d(\bb C)$ satisfying $\|e^x_a-f^x_a\|_2<\eta$ for all $x\in [k]$ and $a\in [n]$.  If $\eta$ is sufficiently small so that $\varphi_n(f^x_1,\ldots,f^x_n)<\delta(\frac{\epsilon}{4})$ for all $x\in [k]$, it follows that there is $(g^1,\ldots,g^k)\in X(n,M_d(\bb C))^k$ such that $\|f^x_a-g^x_a\|_2<\frac{\epsilon}{4}$ for all $x\in [k]$ and $a\in [n]$.  Setting $p'(a,b|x,y):=\tau(g^x_ag^y_b)$, we have that $p'\in C_q^s(k,n)$ and $\|p-p'\|_\infty<2\eta+\frac{\epsilon}{2}<\epsilon$ if $\eta$ is also chosen to satisfy $\eta<\frac{\epsilon}{4}$.  
\end{proof}

Given an element $p\in C_{qa}^s(k,n)$, one can approximate $p$ by elements of $C_q(k,n)$; it is not a priori clear that these approximating quantum correlations can themselves be taken to be synchronous.  Thankfully, this is the case:

\begin{thm}
For each $k,n\geq 2$, we have $C^s_{qa}(k,n)=\overline{C^s_q(k,n)}$.
\end{thm}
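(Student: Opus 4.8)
The plan is to prove the nontrivial inclusion $C^s_{qa}(k,n)\subseteq\overline{C^s_q(k,n)}$; the reverse inclusion is immediate since $C^s_q(k,n)\subseteq C^s_{qa}(k,n)$ and $C^s_{qa}(k,n)$, being defined as $C_{qa}(k,n)\cap\{p : p \text{ synchronous}\}$, is closed. So fix $p\in C^s_{qa}(k,n)=\overline{C_q(k,n)}\cap\{\text{synchronous}\}$. By definition there are quantum correlations $p^{(m)}\in C_q(k,n)$ with $p^{(m)}\to p$ in $[0,1]^{k^2n^2}$. The difficulty, as flagged in the statement, is that the $p^{(m)}$ need not be synchronous; I must ``synchronize'' them at a small cost.

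First I would route through the operator-algebraic picture. Each $p^{(m)}\in C_q(k,n)$ comes (by the general theory) from finite-dimensional Hilbert spaces $H_A^{(m)},H_B^{(m)}$, POVMs $\vec A^{x,(m)}$, $\vec B^{y,(m)}$, and a unit vector $\xi^{(m)}$. The key idea is to pass to the limit using an ultraproduct: let $\u$ be a nonprincipal ultrafilter on $\bN$, and consider the tracial von Neumann algebra ultraproduct $\M:=\prod_{m\to\u} (B(H_A^{(m)}), \tr_{H_A^{(m)}})$ — or, better, work inside $\R^\u$ after embedding each finite-dimensional matrix algebra $B(H_A^{(m)})$ tracially into $\R$. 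The images of the $A^{x,(m)}_a$ under these embeddings, pushed into $\R^\u$, give tuples of positive contractions $\tilde e^x_a$ summing to $1$ in each coordinate, hence lying in $X(n,\R^\u)$. Using the fact that $\xi^{(m)}$ together with $\vec B^{y,(m)}$ on the second factor also lands in $\R^\u$ and that the state $p\mapsto\langle(A^x_a\otimes B^y_b)\xi,\xi\rangle$ is approximated by the canonical trace on products of matrix algebras, I would identify the limit of $p^{(m)}(a,b|x,y)$ with a value of the form $\tau_{\R^\u}(e^x_a f^y_b)$ for commuting PVMs $e^x_a, f^y_b$ in $\R^\u$. The synchronicity hypothesis $p(a,a'|x,x)=0$ for $a\ne a'$ then forces, in the tracial GNS inner product, $\|e^x_a - f^x_a\|_2 = 0$ in the relevant subalgebra, so that after passing to the von Neumann algebra generated by the $e$'s and $f$'s one can take $f^x_a = e^x_a$. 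This exhibits $p\in C_{\R^\u}(k,n)$.

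Once $p\in C_{\R^\u}(k,n)=\overline{C_\R(k,n)}$ (the displayed equality $C_{\R^\u}(k,n)=\overline{C_\R(k,n)}$ from the text, a consequence of the definability of $X(n,\R)$ via Lemma~\ref{PVMdefinability}), I invoke Lemma~\ref{Rlemma}, which says $\overline{C_\R(k,n)}=\overline{C^s_q(k,n)}$. Hence $p\in\overline{C^s_q(k,n)}$, as desired. So the overall architecture is: approximating quantum correlations $\Rightarrow$ ultraproduct limit in $\R^\u$ $\Rightarrow$ use synchronicity to collapse the $B$-side PVMs onto the $A$-side PVMs $\Rightarrow$ land in $C_{\R^\u}(k,n)=\overline{C_\R(k,n)}=\overline{C^s_q(k,n)}$.

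The main obstacle is the middle step — showing the limit correlation $p$ actually arises as a \emph{trace} of a \emph{single} PVM (i.e.\ with $f^y_b=e^y_b$), rather than merely as a quantum commuting correlation from two commuting families. The point where this is won is precisely the synchronicity condition: for each fixed $x$, the two PVMs $(e^x_a)_a$ and $(f^x_b)_b$ satisfy $\tau(e^x_a f^x_b)=p(a,b|x,x)=\delta_{ab}\,\tau(e^x_a)$, and a short computation with $\sum_b f^x_b = 1$ gives $\tau\big((e^x_a-f^x_a)^*(e^x_a-f^x_a)\big)=0$; since $\tau$ is faithful on $\R^\u$ this yields $e^x_a=f^x_a$ outright (no passage to a quotient needed). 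Care must also be taken that the ultraproduct of the vector states $\langle\,\cdot\,\xi^{(m)},\xi^{(m)}\rangle$ is correctly matched with the limiting trace, which is where one uses that in finite dimensions every state in the convex picture can be realized, up to arbitrarily small error, via the normalized trace on a (possibly larger) matrix amplification — essentially the ``moreover'' clause of Theorem~\ref{oasync} run in reverse. The rest is routine bookkeeping with $\|\cdot\|_2$ versus $\|\cdot\|_\infty$ estimates on the finitely many matrix entries of a correlation.
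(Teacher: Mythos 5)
The paper itself does not supply a proof of this theorem; it attributes it to Kim--Paulsen--Schafhauser and to the independent, more elementary arguments of Paddock and Vidick. Evaluated on its own terms, your proposal has a genuine gap at exactly the step that carries the weight of the theorem.

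The problem is the claim that the vector-state correlations $p^{(m)}$ can be re-expressed, in the limit, as traces in $\R^\u$. Embedding $B(H_A^{(m)})$ tracially into $\R$ and pushing the $A^{x,(m)}_a$ into $\R^\u$ does produce a family $(\tilde A^x_a)$ there, but then
$$\tau_{\R^\u}(\tilde A^x_a\tilde A^y_b)=\lim_{m\to\u}\mathrm{tr}_{H_A^{(m)}}\bigl(A^{x,(m)}_aA^{y,(m)}_b\bigr),$$
whereas $p^{(m)}(a,b|x,y)=\langle(A^{x,(m)}_a\otimes B^{y,(m)}_b)\xi^{(m)},\xi^{(m)}\rangle$. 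These two quantities have no a priori relation: the right-hand side is a vector state built from the $B$-side operators and an \emph{arbitrary} unit vector $\xi^{(m)}$, and it coincides with the normalized trace on the $A$-factor only when $\xi^{(m)}$ is the maximally entangled state. Your appeal to the idea that ``every state in finite dimensions can be realized, up to arbitrarily small error, via the normalized trace on a matrix amplification'' is simply not true; and the ``moreover'' clause of Theorem~\ref{oasync} characterizes \emph{synchronous} quantum correlations only, so invoking it to re-encode the generic approximants $p^{(m)}$ as traces begs the question --- those approximants need not be synchronous, and upgrading them is precisely what the theorem is asserting.

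The honest version of your middle step --- take the ultraproduct of the actual vector states as a $W^*$-probability space, observe that synchronicity of the limit $p$ forces the state to restrict to a trace on the algebra generated by the $\tilde A$'s (this is essentially the content of Theorem~\ref{oasync}) --- only yields $p\in C^s_{qc}(k,n)$, realized in a tracial von Neumann algebra that you have no reason to believe embeds in $\R^\u$. Since $C^s_{qa}(k,n)\subsetneq C^s_{qc}(k,n)$ (the synchronous version of the failure of Tsirelson's problem), such tracial realizations do \emph{not} in general embed in $\R^\u$, so nothing is gained. What is actually needed, and what your proposal omits entirely, is a quantitative rounding argument: from a finite-dimensional quantum strategy whose correlation is only \emph{almost} synchronous, produce a nearby finite-dimensional strategy that is \emph{exactly} synchronous (projective, with a maximally entangled vector, i.e.\ tracial). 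That stability result is where the proofs of Paddock and of Vidick do their real work, and it is the step that Kim--Paulsen--Schafhauser replace using Kirchberg's theory of amenable traces. Until you supply it, the route through $C_{\R^\u}(k,n)=\overline{C_\R(k,n)}=\overline{C^s_q(k,n)}$ via Lemma~\ref{Rlemma} is not available, because you have not shown that $p$ lands in $C_{\R^\u}(k,n)$ in the first place.
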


The previous theorem was originally proven by Kim, Paulsen, and Schafhauser \cite[Theorem 3.6]{KPS} using Kirchberg's theory of \emph{amenable traces}, which was an integral part of his work connecting the CEP and the QWEP for \cstar-algebras \cite{K} (see Subection \ref{QWEPsection} below).  A more elementary proof of the previous theorem was given independently by Paddock \cite{paddock} and Vidick \cite{vidick}.

At this point, we have that the following correlation sets all coincide:
$$C^s_{qa}(k,n)=\overline{C^s_q(k,n)}=\overline{C_\R(k,n)}=C_{\R^\u}(k,n).$$

Given a nonlocal game $\frak G$ with $k$ questions and $n$ answers, we let the \textbf{synchronous quantum value of $\frak G$} be given by 
$$\val^{*,s}(\frak G):=\sup_{p\in C_{qa}^s(k,n)} \val(\frak G,p),$$ where in the above definition we could have taken the supremum instead over $C_q^s(k,n)$ or $C_\R(k,n)$ without changing the definition.  By definition, we have $\val^{*,s}(\frak G)\leq \val^*(\frak G)$ for all nonlocal games $\frak G$.

A crucial fact, a byproduct of the proof of Theorem \ref{mip}, is the following:

\begin{thm}\label{mipsync}
Theorem \ref{mip} remains true with $\val^*$ replaced by $\val^{*,s}$.
\end{thm}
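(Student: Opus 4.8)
The plan is to treat the two alternatives of Theorem \ref{mip} separately, the point being that the new content is concentrated entirely in the halting case. First suppose $\cal M$ does not halt on the empty input. Every synchronous quantum‑asymptotic correlation is in particular a quantum‑asymptotic correlation, so $C_{qa}^s(k,n)\subseteq C_{qa}(k,n)$ and hence $\val^{*,s}(\frak G)\leq \val^*(\frak G)$ for every nonlocal game $\frak G$. Theorem \ref{mip} then gives $\val^{*,s}(\frak G_{\cal M})\leq \val^*(\frak G_{\cal M})\leq \frac{1}{2}$, and there is nothing further to check in this case.

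The real work is the halting case: assuming $\cal M$ halts, one must produce synchronous quantum correlations $p$ with $\val(\frak G_{\cal M},p)$ arbitrarily close to $1$ — equivalently, since $C_{qa}^s(k,n)$ is compact and $\val(\frak G_{\cal M},\cdot)$ is continuous, a single $p\in C_{qa}^s(k,n)$ with $\val(\frak G_{\cal M},p)=1$. This cannot be deduced formally from $\val^*(\frak G_{\cal M})=1$: a game whose decision predicate accepts only \emph{distinct} answers on a repeated question can have quantum value $1$ yet synchronous quantum value bounded away from $1$. Instead, I would return to the construction of the reduction $\cal M\mapsto \frak G_{\cal M}$ in \cite{MIP}. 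Those games are assembled by iterating consistency‑testing (``oracularization'') gadgets; in particular one may arrange that $\pi_{\frak G_{\cal M}}$ is symmetric and $D_{\frak G_{\cal M}}(x,x,a,b)=0$ whenever $a\neq b$, i.e.\ that the games are \emph{synchronous games}. The decisive observation — this is what is meant by calling Theorem \ref{mipsync} a byproduct of the proof of Theorem \ref{mip} — is that in the completeness analysis the honest, value‑$1$ prover strategy has Alice and Bob share a common entangled state and apply the same family of measurements, so the correlation it induces is synchronous. Hence the (finite‑dimensional) synchronous quantum strategies witnessing completeness have value tending to $1$, so $\val^{*,s}(\frak G_{\cal M})=1$ (one may pass to the closed set $C_{qa}^s(k,n)=\overline{C_q^s(k,n)}$ if desired), and the theorem follows.

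I expect the main obstacle to be bookkeeping rather than ideas: one must verify that symmetry/synchronicity of the honest strategy survives each of the many compression steps — question reduction, answer reduction via PCP, parallel repetition, and so on — in \cite{MIP}; this is exactly the kind of detail a survey prefers to attribute to the source, or to the development of the synchronous picture in \cite{KPS}. A tempting shortcut would be to ``synchronise'' the game directly, replacing $\frak G_{\cal M}$ by the game with the same questions and answers but with the constraint $a=b$ added on repeated questions and with a symmetrised question distribution, and then to invoke a quantitative rounding theorem for almost‑synchronous correlations (cf.\ \cite{vidick, paddock}). However, winning this synchronised game near‑perfectly still forces a near‑optimal, near‑synchronous strategy for $\frak G_{\cal M}$ itself, so the reformulation does not actually dispense with the structural input; I would therefore regard the direct inspection of \cite{MIP} (equivalently, the treatment in \cite{KPS}) as the honest route.
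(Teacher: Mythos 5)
The paper offers no proof of this statement: it is asserted as ``a byproduct of the proof of Theorem~\ref{mip}'' and left at that, so there is no textual argument to compare against. Your proposal fills in precisely the right content: the non-halting bound $\val^{*,s}\leq\val^*\leq\frac12$ is automatic from $C_{qa}^s\subseteq C_{qa}$, and the halting case genuinely requires the structural fact from the $\mre$ construction that the games produced are synchronous games whose completeness (honest) strategies are built from a shared maximally entangled state with matching projective measurements, hence yield synchronous correlations. That is exactly what ``byproduct of the proof'' is meant to encode, so your route matches the paper's.

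One quibble, which does not affect correctness: your dismissal of the ``shortcut'' via quantitative rounding is a bit too quick. You are right that one cannot escape some structural input from \cite{MIP}, but the input needed for the rounding route is weaker than re-tracing the honest strategy through every compression step. If one grants only that $\frak G_{\cal M}$ is a synchronous game with adequate weight on the diagonal question pairs (a static property of the game, readable off the construction), then any near-perfect quantum strategy is automatically almost synchronous, and the rounding theorems of Vidick \cite{vidick} and Paddock \cite{paddock} convert it into a near-perfect \emph{synchronous} strategy; since $\val^*(\frak G_{\cal M})=1$, one then gets $\val^{*,s}(\frak G_{\cal M})=1$ without ever examining the honest strategy. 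This is likely what the paper has in mind, given that it cites exactly \cite{vidick,paddock} in the adjacent discussion of $C^s_{qa}(k,n)=\overline{C^s_q(k,n)}$. Your ``direct inspection'' route is also valid; the two differ only in how much of the $\mre$ machinery one must reopen.
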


Given a nonlocal game $\frak G=(\pi_\frak G,D_\frak G)$ with $k$ questions and $n$ answers, we let $\vec z$ denote the tuple of variables labeled by $z^x_a$ for $x\in [k]$ and $a\in [n]$ and let $\Theta_\frak G(\vec z)$ denote the quantifier-free formula
$$\sum_{(x,y)\in [k]^2}\pi_\frak G(x,y)\sum_{(a,b)\in [n]^2}D_\frak G(x,y,a,b)\tau(z^x_az^y_b).$$

By the above discussion, we have that $\val^{*,s}(\frak G)=\sup_{\vec e\in X(n,\R)^k}\Theta_\frak G^\R(\vec e)$.

An important fact about definable sets in continuous logic is that they are the sets that one can ``quantify over.''  More precisely (and specializing to our context), since $X(n,\cal R)^k$ is a definable subset of $\cal R^{nk}$, given any $\eta>0$, there is a sentence $\Psi_{\frak G,\eta}$ such that $|\val^{*,s}(\frak G)-\Psi_{\frak G,\eta}^\R|<\eta$.  Moreover, a careful analysis of the proof of the existence of the sentences $\Psi_{\frak G,\eta}$, together with the fact that the function $\delta(n,\epsilon)$ appearing in Lemma \ref{PVMdefinability} is computable (as can be easily verified from the proof given in \cite{KPS}), yields that the assignment $(\frak G,\eta)\mapsto \Psi_{\frak G,\eta}$ is computable.  Moreover, it can also be verified that the sentences $\Psi_{\frak G,\eta}$ are \emph{universal}!  Consequently, if $\Th_\forall(\R)$ were computable, one could effectively approximate, say, $\Psi_{\frak G,\frac{1}{4}}$, to within $\frac{1}{4}$, and could thus decide whether or not $\val^{*,s}(\frak G)$ is bounded by $\frac{1}{2}$ or equal to $1$, contradicting Theorem \ref{mipsync}.

We have just established:

\begin{thm}\label{main}
$\Th_\forall(\R)$ is not computable, whence the CEP has a negative solution.
\end{thm}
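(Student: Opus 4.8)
The plan is to assemble the pieces already laid out in the preceding subsection into a single reduction from the halting problem, using Theorem \ref{CEPcomp} to convert ``$\Th_\forall(\R)$ computable'' into ``CEP has a negative solution'' at the very end. First I would invoke Theorem \ref{CEPcomp}: it suffices to show that $\Th_\forall(\R)$ is not computable, since a positive solution to CEP would force computability, and hence non-computability refutes CEP. So the entire burden is the non-computability claim.

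Next I would recall the chain of identifications established above, namely that for each nonlocal game $\frak G$ with $k$ questions and $n$ answers, the synchronous quantum value satisfies $\val^{*,s}(\frak G)=\sup_{\vec e\in X(n,\R)^k}\Theta_\frak G^\R(\vec e)$, using Theorem \ref{oasync}, Lemma \ref{Rlemma}, and the coincidence $C^s_{qa}(k,n)=\overline{C^s_q(k,n)}=\overline{C_\R(k,n)}=C_{\R^\u}(k,n)$. Then, crucially, since $X(n,\R)^k$ is a \emph{definable} subset of $\R^{nk}$ (this is the content of Lemma \ref{PVMdefinability}, together with the standard fact that quantifying over a definable set is expressible by a genuine formula), for every $\eta>0$ there is a \emph{universal} sentence $\Psi_{\frak G,\eta}$ in the language of tracial von Neumann algebras with $|\val^{*,s}(\frak G)-\Psi_{\frak G,\eta}^\R|<\eta$, and the assignment $(\frak G,\eta)\mapsto\Psi_{\frak G,\eta}$ is computable because $\delta(n,\epsilon)$ from Lemma \ref{PVMdefinability} is computable. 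I would state these facts as already justified by the discussion and not re-derive the definability machinery here.

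With those in hand, the argument is a short diagonalization. Suppose for contradiction that $\Th_\forall(\R)$ were computable, i.e.\ $\sigma\mapsto\sigma^\R$ is computable uniformly over universal sentences $\sigma$. Given a Turing machine $\cal M$, form the nonlocal game $\frak G_{\cal M}$ from Theorem \ref{mip}, then compute the universal sentence $\Psi_{\frak G_{\cal M},1/4}$, then approximate $\Psi_{\frak G_{\cal M},1/4}^\R$ to within $1/4$. If $\cal M$ halts on empty input then $\val^{*,s}(\frak G_{\cal M})=1$ by Theorem \ref{mipsync}, so $\Psi_{\frak G_{\cal M},1/4}^\R>1-1/4=3/4$, and our approximation exceeds $1/2$; if $\cal M$ does not halt then $\val^{*,s}(\frak G_{\cal M})\leq 1/2$, so $\Psi_{\frak G_{\cal M},1/4}^\R<1/2+1/4=3/4$, and our approximation is below $1$, i.e.\ we can certify $\Psi_{\frak G_{\cal M},1/4}^\R<3/4+1/4=1$; adjusting the threshold, these two cases are separated with a gap, so we could decide the halting problem. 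This contradiction shows $\Th_\forall(\R)$ is not computable, and with Theorem \ref{CEPcomp} it follows that CEP fails.

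The main obstacle is not in this final assembly—which is essentially bookkeeping—but in the two inputs flagged as ``a careful analysis of the proof'': that the sentences $\Psi_{\frak G,\eta}$ can be chosen \emph{universal} (rather than merely $\forall\exists$, which is what naively quantifying over a definable set via its defining formula would give), and that $(\frak G,\eta)\mapsto\Psi_{\frak G,\eta}$ is genuinely computable. The universality point is the subtle one: one must exploit that $X(n,\R)$ is cut out \emph{uniformly over all tracial von Neumann algebras} by the $\delta$-$\epsilon$ estimate of Lemma \ref{PVMdefinability}, so that the approximate quantifier $\sup_{\vec e\in X(n,\R)^k}$ can be replaced by $\sup$ over tuples of positive contractions satisfying $\varphi_n<\delta$—a purely universal constraint—at the cost of only $\eta$ in value. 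I would present this as established by the preceding discussion and not reprove it, since the theorem statement permits assuming everything stated earlier in the excerpt.
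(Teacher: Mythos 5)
Your proposal is correct and follows essentially the same approach as the paper: invoke Theorem~\ref{CEPcomp} to reduce to non-computability of $\Th_\forall(\R)$, use the definability of $X(n,\R)^k$ (via Lemma~\ref{PVMdefinability}) to produce computable universal sentences $\Psi_{\frak G,\eta}$ approximating $\val^{*,s}(\frak G)$, and then reduce the halting problem via Theorem~\ref{mipsync}. Your explicit remark that the sentences can be taken purely universal because the approximate quantifier over $X(n,\R)^k$ is replaced by a sup over tuples satisfying the universal constraint $\varphi_n<\delta$ is precisely the ``careful analysis'' the paper alludes to but does not spell out; the only cosmetic issue is that your numerical constants ($\eta=\eta'=1/4$ gives a total error budget of $1/2$, exactly the size of the gap) need mild tightening, as you yourself flag.
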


In fact, keeping in mind Corollary \ref{godel1}, we have the following ``G\"odelian refutation'' of the CEP:

\begin{cor}
There does not exist a c.e. set $T$ of sentences in the language of tracial von Neumann algebras such that $\R\models T$ and such that all models of $T$ embed in an ultrapower of $\R$.
\end{cor}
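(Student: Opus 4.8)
The plan is to obtain this as an immediate consequence of Corollary \ref{godel1} and Theorem \ref{main}, once one observes that the hypothesis here is, for present purposes, no weaker than the one in Corollary \ref{godel1}. So suppose, toward a contradiction, that there is a c.e.\ set $T$ of sentences in the language of tracial von Neumann algebras with $\R \models T$ and such that every $M \models T$ embeds into some ultrapower of $\R$.

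First I would check that this already forces $\sup\{\sigma^M : M \models T\} = \sigma^\R$ for every universal sentence $\sigma$, exactly as in the proof of Corollary \ref{godel1}. The inequality $\sigma^\R \le \sup\{\sigma^M : M \models T\}$ is clear since $\R \models T$. For the reverse inequality, fix $M \models T$ and an embedding $M \hookrightarrow \R^\v$ into some ultrapower; writing $\sigma = \sup_{\vec x}\theta(\vec x)$ with $\theta$ quantifier-free, absoluteness of quantifier-free formulas between a structure and a substructure gives $\sigma^M \le \sigma^{\R^\v}$, and the invariance of the universal theory of $\R$ under ultrapowers gives $\sigma^{\R^\v} = \sigma^\R$; hence $\sigma^M \le \sigma^\R$. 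In particular the choice of ultrafilter $\v$ is irrelevant, which is exactly why ``an ultrapower of $\R$'' works just as well as the specific $\R^\u$ appearing in the statement of Corollary \ref{godel1}.

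Next, applying the Completeness theorem for continuous logic (Theorem \ref{completeness}) to the theory $T$ and the universal sentence $\sigma$ yields
$$\sigma^\R = \inf\{r \in \mathbb{Q}^{>0} \ : \ T \vdash \sigma \dotminus r\}.$$
Since $T$ is c.e., the set of its consequences is c.e., so the relation $T \vdash \sigma \dotminus r$ is c.e.\ in $(\sigma,r)$, and the displayed infimum therefore exhibits $\sigma^\R$ as a right-c.e.\ real number, uniformly in $\sigma$. Combined with the fact that $\Th_\forall(\R)$ is left c.e.\ (because $\R$ is computably presentable), this shows that $\Th_\forall(\R)$ is computable, contradicting Theorem \ref{main}. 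This finishes the proof.

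I do not expect a genuine obstacle here: all of the content has already been packaged into Corollary \ref{godel1}, the Completeness theorem, and Theorem \ref{main}, and the corollary is essentially their juxtaposition. The only point demanding a moment's attention is the verification in the second paragraph that embedding into an \emph{arbitrary} ultrapower of $\R$ (rather than the fixed $\R^\u$) still yields $\sigma^M \le \sigma^\R$ for universal $\sigma$; this is immediate from absoluteness of quantifier-free formulas together with the invariance of the universal theory under ultrapowers.
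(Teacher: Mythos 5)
Your proof is correct and takes the same route as the paper: the paper states this corollary as an immediate consequence of Corollary \ref{godel1} together with Theorem \ref{main}, which is exactly the combination you carry out. Your second paragraph essentially re-proves Corollary \ref{godel1} inline, including the (correct but routine) observation that the choice of ultrafilter is immaterial because universal sentences transfer down embeddings and ultrapowers are elementarily equivalent to the base structure.
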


In \cite{R}, for a structure $A$ in some language, we defined the \textbf{$A$EP} to be the statement that there is a c.e.set $T$ of sentences such that $A\models T$ and such that all models of $T$ embed in an ultrapower of $A$.  Thus, we have just established that the $\R$EP has a negative solution.  We will show that the $A$EP has a negative solution for a handful of other important operator algebras below.  Note that if the $A$EP has a negative solution then, in particular, the universal theory of $A$ is not right c.e.

We remark in the special case of $\R$ that one does not need $\R\models T$ but rather only that $T$ extends the theory of II$_1$ factors, whence we obtain:

\begin{cor}
There does not exist a c.e. set $T$ of sentences extending the theory of II$_1$ factors all of whose models embed in an ultrapower of $\R$.
\end{cor}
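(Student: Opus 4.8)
The plan is to adapt the proof of Corollary \ref{godel1} (equivalently, of Theorem \ref{CEPcomp}), replacing the hypothesis $\R\models T$ by the weaker hypothesis that $T$ extends $T_{II_1}$; the point that makes this weakening harmless is that $\R$ embeds into \emph{every} II$_1$ factor. So I would assume, toward a contradiction, that such a (satisfiable) c.e.\ set $T$ of sentences in the language of tracial von Neumann algebras exists, fix an arbitrary universal sentence $\sigma$ in that language, and aim to show $\sigma^\R=\sup\{\sigma^M \ : \ M\models T\}$.

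For the inequality $\sigma^\R\leq\sup\{\sigma^M \ : \ M\models T\}$: every $M\models T$ is a II$_1$ factor, since $T\supseteq T_{II_1}$, so $\R$ embeds into $M$; as $\sigma$ is universal, an embedding cannot decrease its value, so $\sigma^\R\leq\sigma^M$, and taking the supremum over the (nonempty, since $T$ is satisfiable) class of models of $T$ gives the claim. For the reverse inequality $\sup\{\sigma^M \ : \ M\models T\}\leq\sigma^\R$: by hypothesis each $M\models T$ embeds into $\R^\u$, and $\sigma^{\R^\u}=\sigma^\R$ since ultrapowers preserve values of sentences, so again by universality $\sigma^M\leq\sigma^{\R^\u}=\sigma^\R$. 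Hence $\sigma^\R=\sup\{\sigma^M \ : \ M\models T\}$, which by the Completeness theorem for continuous logic (Theorem \ref{completeness}) equals $\inf\{r\in\mathbb{Q}^{>0} \ : \ T\vdash\sigma\dotminus r\}$.

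Since $T$ is c.e., the set of pairs $(\sigma,r)$ with $T\vdash\sigma\dotminus r$ is c.e., so the displayed infimum exhibits $\sigma^\R$ as right c.e., uniformly in $\sigma$; that is, $\Th_\forall(\R)$ is right c.e. Together with the fact that $\Th_\forall(\R)$ is left c.e.\ (as $\R$ is computably presentable), this makes $\Th_\forall(\R)$ computable, contradicting Theorem \ref{main}. This completes the proof.

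I do not expect any genuine obstacle here: beyond Theorem \ref{main} and the Completeness theorem, the only input is the elementary observation that $\R$ embeds into every II$_1$ factor, which is exactly what replaces the use of ``$\R$ is itself a model of $T$'' in Corollary \ref{godel1}. The one point requiring a word of care is the tacit assumption that $T$ is satisfiable (an inconsistent $T$ has no models and would vacuously meet the embedding condition while trivially extending $T_{II_1}$); under that standing assumption the argument above goes through verbatim.
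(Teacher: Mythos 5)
Your proof is correct and takes exactly the approach the paper has in mind: the paper derives this corollary from a one-line remark that, since $\R$ embeds into every II$_1$ factor, one can replace the hypothesis $\R\models T$ in Corollary~\ref{godel1} by the hypothesis that $T$ extends $T_{II_1}$, and your argument is precisely the fleshed-out version of that remark, with the added (and correct) caveat that $T$ should be assumed satisfiable.
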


\subsection{A digression:  $\val^{co,s}(\frak G)$ is right c.e.}\label{digression}

One can use Theorem \ref{oasync} to fulfill a promise made earlier, namely by proving the synchronous version of Proposition \ref{qcrightce}:

\begin{thm}\label{synctsirelson}
The synchronous quantum comuting value $\val^{co,s}(\frak G)$ is right c.e. uniformly in $\frak G$.
\end{thm}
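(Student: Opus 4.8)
The plan is to follow the template of the proof of Theorem~\ref{CEPcomp}: realize $\val^{co,s}(\frak G)=\sup_{p\in C^s_{qc}(k,n)}\val(\frak G,p)$ as a quantity of the form $\sup\{\sigma^M\ :\ M\models T\}$ for a suitable theory $T$ and sentence $\sigma$ that are produced effectively from $\frak G$, and then invoke the Completeness theorem for continuous logic (Theorem~\ref{completeness}) to identify this value with $\inf\{r\in\mathbb{Q}^{>0}\ :\ T\vdash \sigma\dotminus r\}$. Since a c.e.\ theory has a c.e.\ set of provable consequences (and uniformly so in the data defining it), enumerating the rationals $r$ with $T\vdash\sigma\dotminus r$ gives a uniformly computable nonincreasing sequence of rational upper bounds for $\val^{co,s}(\frak G)$ whose infimum is $\val^{co,s}(\frak G)$, which is exactly what it means for this value to be right c.e.\ uniformly in $\frak G$.

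The key design choice — the one that lets Theorem~\ref{oasync} do all the work — is to put the projection valued measures into the \emph{language} rather than quantifying over them. Fix a nonlocal game $\frak G=(\pi_\frak G,D_\frak G)$ with $k$ questions and $n$ answers and with $\pi_\frak G$ rational-valued. Let $L$ be the language of \cstar-algebras augmented by a unary predicate symbol (for a tracial state $\tau$) and by constant symbols $e^x_a$ for $x\in[k]$, $a\in[n]$, all living in the operator-norm unit-ball sort. Let $T_\frak G$ be the $L$-theory consisting of the axioms for \cstar-algebras, the axioms declaring the unary predicate to be a tracial state (both computable axiomatizations, as recalled in Section~3), together with, for each $x\in[k]$, the atomic conditions $d((e^x_a)^*,e^x_a)=0$ and $d((e^x_a)^2,e^x_a)=0$ for $a\in[n]$ and $\norm{\sum_{a=1}^n e^x_a-1}=0$. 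Then $T_\frak G$ is computable and $\frak G\mapsto T_\frak G$ is computable, and a model of $T_\frak G$ is precisely a \cstar-algebra $A$ equipped with a tracial state $\tau$ and a choice of $k$ PVMs $(e^x_1,\dots,e^x_n)\in X(n,A)$. Let $\sigma_\frak G$ be the $L$-sentence obtained from the quantifier-free formula $\Theta_\frak G(\vec z)$ of the previous subsection by substituting the constants $e^x_a$ for the variables $z^x_a$, i.e.\ $\sigma_\frak G=\sum_{(x,y)\in[k]^2}\pi_\frak G(x,y)\sum_{(a,b)\in[n]^2}D_\frak G(x,y,a,b)\,\tau(e^x_ae^y_b)$; since the $e^x_a$ are now constants, this is a genuine $L$-sentence, and $\frak G\mapsto\sigma_\frak G$ is computable.

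With this set-up, the identity $\sup\{\sigma_\frak G^M\ :\ M\models T_\frak G\}=\val^{co,s}(\frak G)$ is just a restatement of Theorem~\ref{oasync}: given $M\models T_\frak G$, the numbers $p(a,b|x,y):=\tau(e^x_ae^y_b)$ form an element of $C^s_{qc}(k,n)$ with $\sigma_\frak G^M=\val(\frak G,p)$, and conversely every $p\in C^s_{qc}(k,n)$ arises, by Theorem~\ref{oasync}, from some \cstar-algebra with tracial state and PVMs, hence from some model of $T_\frak G$. Applying Theorem~\ref{completeness} to $T_\frak G$ and $\sigma_\frak G$ then yields $\val^{co,s}(\frak G)=\inf\{r\in\mathbb{Q}^{>0}\ :\ T_\frak G\vdash\sigma_\frak G\dotminus r\}$, and the uniform c.e.-ness of $T_\frak G$ (hence of its consequences) finishes the proof as in the first paragraph.

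I do not expect a serious obstacle here once the encoding has been chosen; the content of the argument is entirely in reducing to Theorem~\ref{oasync} plus the Completeness theorem. The only points requiring care are routine: verifying that "\cstar-algebra equipped with a tracial state," augmented by finitely many constants constrained by atomic conditions, is axiomatized by a theory that can be written down effectively from $\frak G$ (clear from the axiomatizations in Section~3), and noting that the rational coefficients $\pi_\frak G(x,y)$ and $D_\frak G(x,y,a,b)$ make $\sigma_\frak G$ a bona fide $L$-sentence to which Theorem~\ref{completeness} applies — it need not be universal, since universality was relevant only to the discussion of $\Th_\forall(\R)$, not to right c.e.-ness of a single real. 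The one conceptual temptation to avoid is trying to express the supremum over $X(n,A)^k$ by quantifying over the unit ball with a penalty term $m\cdot\varphi_n$ and then worrying about whether $X(n,\cdot)$ is definable in an arbitrary \cstar-algebra carrying a trace; the constant-symbol encoding sidesteps this issue entirely.
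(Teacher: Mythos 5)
Your proposal is correct and follows essentially the same route as the paper's proof: expand the language by constants $e^x_a$, use Theorem \ref{oasync} to identify $\val^{co,s}(\frak G)$ with a supremum of sentence values over models of a c.e. theory of tracial \cstar-algebras with designated PVMs, and invoke the Completeness theorem to deduce right c.e.-ness. The only cosmetic difference is that the paper folds the PVM constraint $\varphi_n(\vec e)$ and the threshold $r$ into a single parametrized sentence $\Psi_{\frak G,r}$ and iterates a satisfiability check, whereas you place the PVM constraint in the theory $T_\frak G$ and apply the quantitative form of Theorem \ref{completeness} directly to the fixed sentence $\sigma_\frak G$.
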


\begin{proof}
Suppose that $\frak G$ has $k$ questions and $n$ answers.  Let $L$ be the language of tracial \cstar-algebras expanded by new constant symbols $e^x_a$ for all $x\in [k]$ and $a\in [n]$.  For any $r\in [0,1]$, let $\Psi_{\frak G,r}$ be the $L$-sentence given by
$$\Psi_{\frak G,r}=\max\left(\max_{x\in [k]}\varphi_n( e^x_1,\ldots,e^x_n),r\dotminus \Theta_{\frak G}(\vec e)\right),$$ where $\Theta_\frak G(\vec z)$ is the formula from the previous subsection (viewed now as a formula in the language of tracial \cstar-algebras).  Theorem \ref{oasync} implies that $\val^{co,s}(\frak G)\geq r$ if and only if $T\cup\{\Psi_{\frak G,r}\}$ is satisfiable, where $T$ is the $L$-theory of \cstar-algebras equipped with a tracial state.  

Now note that if $U$ is any continuous theory in any language, then by the Completeness Theorem \ref{completeness}, $U$ is satisfiable if and only if $U\not\vdash 1\dotminus \frac{1}{2}$. Indeed, if $U$ is satisfiable, then for any $M\models U$, we have $1^M=1$, whence $\inf\{r\in \bb Q^{>0} \ : \ U\vdash 1\dotminus r\}=1$.  Conversely, if $U$ is not satisfiable, then in the left-hand side of the equation in Theorem \ref{completeness}, we are taking the supremum of the emptyset, which, regardless of one's convention, implies $U\vdash 1\dotminus r$ for all $r\in \bb Q^{>0}$.

Thus, an algorithm for obtaining a computable sequence of upper bounds for $\val^{co,s}(\frak G)$ is obtained by running, for each rational $r\in (0,1)$, proofs from the c.e. set $T\cup\{\Psi_{\frak G,r}\}$ of sentences; if one obtains $1\dotminus \frac{1}{2}$ as a theorem, then one knows that $\val^{co,s}(\frak G)<r$.  Moreover, if $\val^{co,s}(\frak G,r)<r$, then one is guaranteed to see $1\dotminus \frac{1}{2}$ as a theorem.
\end{proof}

The previous theorem suffices to show that Tsirelson's problem has a negative solution.  Indeed, it shows that an a priori weaker statement, namely that $C^s_{qa}(k,n)=C^s_{qc}(k,n)$ for all $k,n\geq 2$, is false.  Indeed, by the computable presentability of $\R$, $\val^{*,s}(\frak G)$ is left-c.e. uniformly in $\frak G$; if equality held in the above statement, then by Theorem \ref{synctsirelson}, $\val^{*,s}(\frak G)$ would also be computable uniformly in $\frak G$, contradicting Theorem \ref{mipsync}.  We note that in \cite[Corollary 3.8]{KPS} it was pointed out that Tsirelson's problem is equivalent to its a priori weaker version in terms of synchronous correlations.

\subsection{A digression:  undecidability of the density of moments}

The computability of the universal theory of $\R$ can be reformulated into a decidability result about ``densities of moments'' that is a bit more palatable to an operator algebraist not interested in model theory.  In this subsection, we merely state the result, referring the reader to \cite[Section 4]{R} for the proof.

Given positive integers $n$ and $d$, we fix variables $x_1,\ldots,x_n$ and enumerate all *-monomials in the variables $x_1,\ldots,x_n$ of total degree at most $d$ by $m_1,\ldots,m_L$. (Of course, $L=L(n,d)$ depends on both $n$ and $d$.)  We consider the map $\mu_{n,d}:\R_1^n \rightarrow \bb D^L$ given by $\mu_{n,d}(\vec a)=(\tau(m_i(\vec a)) \ : \ i=1,\ldots,L)$.  (Here, $\bb D$ is the complex unit disk.)

% for any tracial von Neumann algebra $(N,\tau)$, given by
% \[
% \bar{m}(\bar a) = (\tau(m_i(\bar a)) : i \leq L)
% \]

We let $X(n,d)$ denote the range of $\mu_{n,d}$ and $X(n,d,p)$ be the image of the unit ball of $M_p(\mathbb C)$ under $\mu_{n,d}$.  Notice that $\bigcup_{p\in \bb N} X(n,d,p)$ is dense in $X(n,d)$.

\begin{thm}
The following statements are equivalent:
\begin{enumerate}
    \item The universal theory of $\R$ is computable.
    \item There is a computable function $F:\bb N^3\to \bb N$ such that, for every $n,d,k\in \bb N$, $X(n,d,F(n,d,k))$ is $\frac{1}{k}$-dense in $X(n,d)$.
\end{enumerate}
\end{thm}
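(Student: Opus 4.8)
The plan is to show the two implications separately, translating between the model-theoretic statement about $\Th_\forall(\R)$ and the geometric statement about the density of the matrix-valued moment images inside the full moment bodies $X(n,d)$.

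\textbf{From (2) to (1).} Suppose such a computable $F$ exists. To show $\Th_\forall(\R)$ is computable it suffices, by the remarks in Section 4, to show $\sigma^\R$ is right c.e. uniformly in $\sigma$, since it is already known to be left c.e. Given a universal sentence $\sigma = \sup_{\vec x} \theta(\vec x)$ with $\theta$ quantifier-free, one first reduces to the case where $\theta$ is (uniformly approximated by) a connective applied to real and imaginary parts of trace-of-monomial expressions $\tau(m_i(\vec x))$ of bounded degree $d$ in $n$ variables — this is just the observation that quantifier-free formulas in the language of tracial von Neumann algebras are, up to arbitrarily small uniform error, continuous functions of the moments $\mu_{n,d}(\vec x)$, with a modulus of continuity that can be computed from $\sigma$. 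Thus $\sigma^\R = \sup\{ g(z) : z \in X(n,d)\}$ for a computable continuous $g : \bb D^L \to \bb R$ with computable modulus of uniform continuity. Now for each $k$, compute $F(n,d,k)$, enumerate the (finitely many, up to the required precision) images $\mu_{n,d}(\vec a)$ for $\vec a$ in a sufficiently fine net of the unit ball of $M_{F(n,d,k)}(\bb C)$, take the max of $g$ over this finite set, and add a correction term coming from the modulus of continuity of $g$ applied to $1/k$: since $X(n,d,F(n,d,k))$ is $\frac1k$-dense in $X(n,d)$, this yields an upper bound for $\sigma^\R$ that converges to $\sigma^\R$ as $k\to\infty$. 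This exhibits $\sigma^\R$ as right c.e. uniformly in $\sigma$, giving (1).

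\textbf{From (1) to (2).} Conversely, suppose $\Th_\forall(\R)$ is computable. Fix $n,d,k$. We must compute some $p = F(n,d,k)$ with $X(n,d,p)$ being $\frac1k$-dense in $X(n,d)$. The key point is that ``$X(n,d,p)$ is $\frac1k$-dense in $X(n,d)$'' can be detected by a search that is guaranteed to terminate. On the one hand, for each fixed $p$, the set $X(n,d,p)$ is (up to computable error) the image of a compact computable set under a computable map, so one can compute a finite $\frac{1}{2k}$-net $S_p$ of it; and the defect ``some point of $X(n,d)$ is $\frac1k$-far from $X(n,d,p)$'' is an existential statement over $\R$ about the tuple $\vec a \in \R_1^n$ witnessing a far-away moment vector, hence corresponds to the positivity of the value of a universal sentence built from $g$-type data and $S_p$, which by (1) is computable; so if the defect is genuinely positive we will eventually certify it, and if it is $0$ (or we pass to $p+1$) we try the next $p$. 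On the other hand, $\bigcup_p X(n,d,p)$ is dense in $X(n,d)$ and $X(n,d)$ is compact, so for \emph{some} $p$ the set $X(n,d,p)$ is genuinely $\frac{1}{2k}$-dense, and for that $p$ the ``$\frac1k$-dense'' test will succeed. Dovetailing the search over $p$ and over the computations certifying density produces the value $F(n,d,k)$, and the whole procedure is uniform in $(n,d,k)$, giving (2).

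\textbf{Main obstacle.} The delicate step is the $(1)\Rightarrow(2)$ direction: one needs the \emph{termination} of the search for a witnessing $p$, which hinges on the fact that density of $\bigcup_p X(n,d,p)$ in $X(n,d)$ (compactness of the latter) upgrades to: some single $X(n,d,p)$ is already $\frac{1}{2k}$-dense. One must also be careful that ``$X(n,d,p)$ fails to be $\frac1k$-dense'' is genuinely a \emph{computable} (not merely c.e.) predicate of $p$, using (1) to evaluate the relevant sup over $\R$ up to arbitrary precision, and handling the boundary case where the relevant distance equals exactly $1/k$ by working with a strict/non-strict pair of thresholds (e.g.\ testing $\frac{1}{2k}$-density to certify success, while ruling out $\frac1k$-density only up to arbitrarily small slack). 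Compared to this, the $(2)\Rightarrow(1)$ direction is essentially a bookkeeping exercise once one has recorded the reduction of quantifier-free formulas to continuous functions of moments with computable moduli of continuity.
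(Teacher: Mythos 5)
Your proof is correct and, as far as I can tell, takes the natural route that the paper itself only sketches by reference to \cite[Section 4]{R}: $(2)\Rightarrow(1)$ reduces quantifier-free formulas to continuous functions of moment vectors with computable moduli of continuity and then uses $\frac1k$-density of $X(n,d,F(n,d,k))$ to produce computable upper bounds for $\sigma^\R$, while $(1)\Rightarrow(2)$ encodes the density defect of $X(n,d,p)$ as the value of a universal sentence, uses $(1)$ to evaluate it, and invokes compactness to guarantee the search terminates.

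One small inaccuracy you should fix: you assert outright that $X(n,d)$ is compact, but the paper does not claim this and it is not obvious that the range of $\mu_{n,d}$ on $\R_1^n$ is closed (indeed the natural closed object is $\overline{X(n,d)}$, which can be identified with the corresponding moment set of $\R^\u$). This is harmless for your termination argument, because $\overline{X(n,d)}\subseteq\bb D^L$ is compact, $\bigcup_p X(n,d,p)$ is dense in it, and since each $X(n,d,p)$ is closed (a continuous image of a compact set), being $\frac1k$-dense in $X(n,d)$ and in $\overline{X(n,d)}$ are equivalent conditions. You should also note, as your compactness step tacitly requires, that the sets $X(n,d,p)$ need not be nested in $p$; the standard fix is to replace $X(n,d,p)$ by $\bigcup_{q\le p}X(n,d,q)$, or to search along $p!$, so that the finite subcover extracted from compactness is realized by a single index.
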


\subsection{W$^*$ probability spaces}

Although the CEP is a statement about tracial von Neumann algebras, Ando, Haagerup, and Winslow \cite{AHW} (building on Kirchberg's reformulation of CEP in terms of the QWEP described in the next section) proved that it is equivalent to a statement about \emph{all} von Neumann algebras:

\begin{thm}\label{AHW}
The CEP is equivalent to the statement that every von Neumann algebra $M$ embeds \emph{with expectation} into the \emph{Ocneanu ultrapower} $\R_\infty^\u$ of the hyperfinite III$_1$ factor $\R_\infty$.  
\end{thm}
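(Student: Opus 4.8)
The plan is to not reprove Theorem~\ref{AHW} from scratch---since it is a deep result of Ando--Haagerup--Winsl\o{}w building on Kirchberg---but rather to explain the architecture of how one reduces the ``all von Neumann algebras'' statement to the CEP. First I would recall Kirchberg's reformulation: the CEP is equivalent to the assertion that every \cstar-algebra has the QWEP (quotient of a \cstar-algebra with the weak expectation property), which is the bridge discussed in the next section of the paper. From there, the key observation is that the Ocneanu ultrapower $\R_\infty^\u$ of the hyperfinite III$_1$ factor is, in the appropriate sense, the ``universal'' receptacle for von Neumann algebras with expectation, mirroring the way $\R^\u$ is universal for tracial von Neumann algebras under trace-preserving embeddings.

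The core of the argument would proceed in the following steps. First, establish the forward direction: assuming the CEP (equivalently, universal QWEP), one shows that an arbitrary von Neumann algebra $M$ admits a normal conditional expectation onto a copy of itself sitting inside $\R_\infty^\u$. The mechanism here is Haagerup's modular theory: a general von Neumann algebra $M$ has a canonical ``core'' $\core(M) = M \rtimes_{\sigma^\varphi} \bR$, a semifinite von Neumann algebra carrying a trace, and one leverages the CEP-type embeddability of the semifinite (hence ``essentially tracial'') core together with Takesaki duality to push the embedding back up to $M$ itself, landing in the core of $\R_\infty^\u$, which one identifies with $\R_\infty^\u$'s own crossed-product structure. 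The presence of a conditional expectation is what makes this work at the level of \emph{all} von Neumann algebras rather than just tracial ones---it is the von Neumann algebraic analogue of the weak expectation property. Second, for the converse, one observes that if every von Neumann algebra embeds with expectation into $\R_\infty^\u$, then in particular every tracial von Neumann algebra does, and one recovers a trace-preserving embedding into $\R^\u$ by composing with the canonical expectation-preserving maps relating $\R_\infty$ and $\R$ (concretely, $\R_\infty \cong \R \mathbin{\bar\otimes} B$ for a suitable type III$_1$ factor $B$ with expectation back onto $\R$, so that an embedding with expectation of a tracial algebra into $\R_\infty^\u$ restricts to one into $\R^\u$).

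The main obstacle is the forward direction's reliance on Haagerup's reduction theory and the delicate fact that the Ocneanu ultrapower behaves well with respect to the crossed-product (core) construction---that is, $\core(\R_\infty^\u)$ relates correctly to $\core(\R_\infty)^\u$ and carries the expected semifinite trace. Verifying that the Ocneanu ultrapower (defined using the bounded-sequence algebra modulo the ideal determined by a fixed normal state, rather than the naive $\ell^\infty$-ultrapower) is the right object, and that conditional expectations survive the ultrapower construction, is exactly the technical heart of \cite{AHW} and is not something to be reproduced here. Accordingly, I would present this as a statement imported from \cite{AHW} (and ultimately from \cite{K}), sketching only the conceptual reduction above and directing the reader to those sources for the modular-theoretic details.
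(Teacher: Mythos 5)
Your decision to treat this as an imported result rather than reproving it is exactly what the paper does: Theorem~\ref{AHW} is stated with attribution to \cite{AHW} and no proof is given. Your sketch of the forward direction (modular theory, the continuous core, Takesaki duality, and compatibility of the Ocneanu ultrapower with the crossed-product construction) is a fair account of the Haagerup--Ando--Haagerup--Winsl\o{}w machinery.

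There is, however, a genuine error in your sketch of the converse. You propose to recover a trace-preserving embedding $M \hookrightarrow \R^\u$ by composing the embedding $\iota : M \hookrightarrow \R_\infty^\u$ with a conditional expectation $\R_\infty^\u \to \R^\u$. But a conditional expectation is a positive unital bimodule projection onto a \emph{subalgebra of the domain}; it is not multiplicative, and precomposing it with $\iota$ yields a unital completely positive map $M \to \R^\u$, not a $*$-homomorphism. So this construction does not produce an embedding at all. The correct route, which the paper itself points to (see the footnote in the W$^*$-probability spaces subsection and Theorem~\ref{Kirchberg}), is via QWEP: $\R_\infty$ is hyperfinite, hence injective, hence has the WEP and a fortiori QWEP; QWEP passes to Ocneanu ultrapowers and is inherited by subalgebras that are the range of a conditional expectation; so a tracial $M$ embedding with expectation into $\R_\infty^\u$ has QWEP, and Kirchberg's Theorem~\ref{Kirchberg} then upgrades this to a trace-preserving embedding $M \hookrightarrow \R^\u$. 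This QWEP bridge is exactly what makes the converse work and should replace the expectation-composition step in your sketch.
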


There are two undefined terms in the previous theorem that we now explain.  First, an embedding $\iota:M\hookrightarrow N$ between von Neumann algebras is said to be with \textbf{expectation} if it is normal and if there is a normal linear map $\varepsilon:N\to M$ such that $\varepsilon(1)=1$ and $\varepsilon(\iota(x_1)y\iota(x_2))=x_1\varepsilon(y)x_2$ for all $x_1,x_2\in M$ and $y\in N$.  For general von Neumann algebras, the existence of a normal embedding is a fairly weak condition and for technical reasons the existence of an expectation yields the correct notion of embedding for general von Neumann algebras.  We note that a normal, trace-preserving embedding between tracial von Neumann algebras is automatically with expectation.  Second, the Ocneanu ultrapower \cite{Ocneanu} is an ultrapower construction that works for all von Neumann algebras.  It begins by first identifying the ideal $I_\u$ that one quotients by, namely those sequences $(x_i)_{\in I}\in M^I$ such that $x_i$ strong* converges to $0$ along the ultrafilter\footnote{Strong* convergence just means that both $x_i$ and $x_i^*$ SOT converge to $0$ along the ultrafilter}.  One then defines $M_\u$ to be all those sequences $(x_i)_{i\in I}\in M^I$ such that $(x_i)I_\u,I_\u(x_i)\subseteq I_\u$.  The Ocneanu ultrapower is then defined to be the quotient $M^\u:=M_\u/I_\u$, which is then shown to be a von Neumann algebra.  It is readily verified that the Ocneanu ultrapower reduces to the tracial ultrapower in the case that the algebra admits a trace.

Theorem \ref{AHW} is not obviously a model-theoretic statement as there is currently no language suitable for studying all von Neumann algebras.  However, there is a model-theoretic context for the class of \textbf{W$^*$-probability spaces}, which are the pairs $(M,\varphi)$ with $M$ a von Neumann algebra and $\varphi$ a faithful, normal state on $M$.  It is known that a von Neumann algebra admits a faithful, normal state precisely when it is $\sigma$-finite, meaning that the identity is the supremum of a countable set of finite projections.  Note in particular that a tracial von Neumann algebra is a W$^*$-probability space.  We define an embedding between W$^*$-probability spaces $(M,\varphi)$ and $(N,\psi)$ to be an embedding $\iota:M\hookrightarrow N$ with expectation $\epsilon:N\to M$ such that $\varepsilon$ (and thus $\iota$) are state-preserving.  Also, the ultrapower of a W$^*$-probability space $(M,\varphi)$ is defined to be $(M^\u,\varphi^\u)$, where $M^\u$ is the Ocneanu ultrapower of $M$ and $\varphi^\u$ is the ultrapower state on $M^\u$.  A first-order axiomatization for W$^*$-probability spaces recovering the Ocneanu ultraproduct is due to Dabrowski \cite{dabrowski}.  A much simpler axiomatization of W$^*$-probability spaces will be presented in the forthcoming article \cite{wstar}.

Now suppose that $(M,\varphi)$ is a W$^*$-probability space and $\iota:M\hookrightarrow \R_\infty^\u$ is an embedding with expectation $\varepsilon:\R^\infty_\u \to M$.  One can define a faithful, normal state $\psi$ on $\R^\infty_\u$ by setting $\psi(x):=\varphi(\varepsilon(x))$, thus obtaining an embedding $(\iota,\varepsilon):(M,\varphi)\hookrightarrow (\R^\infty_\u,\psi)$ of W$^*$-probability spaces.  Consequently, we see that the CEP implies that, for each W$^*$-probability space $(M,\varphi)$, there is a faithful, normal state $\psi$ on $\R_\infty^\u$ such that $(M,\varphi)\hookrightarrow (\R_\infty^\u,\psi)$.  Conversely, if every W$^*$-probability space embeds into $(\R_\infty^\u,\psi)$ for some faithful, normal state $\psi$ on $\R_\infty^\u$, then it follows that CEP holds.\footnote{If a tracial von Neumann algebra embeds into $\R^\infty_\u$ with expectation, then it has QWEP, whence it embeds tracially into $\R^\u$; see the next section.}  This formulation of the CEP does not appear to have the same flavor as the original formulation as the choice of $\psi$ seems to depend on the state $\varphi$, whence there is no canonical W$^*$-probability space whose ultrapower contains all others.  However, a minor miracle occurs:  as shown in \cite[Theorem 4.20]{AH12}, for any two states $\psi_1,\psi_2$ on $\R_\infty^\u$, there is a unitary element $u$ in $\R_\infty^\u$ such that $u\psi_1u^*=\psi_2$, whence a W$^*$-probability space embeds into $(\R_\infty^\u,\psi_1)$ if and only if it embeds into $(\R_\infty^\u,\psi_2)$.  Consequently, given any state $\varphi_\infty$ on $\R_\infty$, whether or not a W$^*$-probability space embeds into $(\R_\infty,\varphi_\infty)^\u$ is independent on the choice of state $\varphi_\infty$.  We have thus arrived at:

\begin{thm}
The CEP is equivalent to the statement:  for any W$^*$-probability space $(M,\varphi)$ and some (equiv. any) state $\varphi_\infty$ on $\R_\infty$, we have that $(M,\varphi)$ embeds into $(\R_\infty,\varphi_\infty)^\u$, or, in model-theoretic terms, that $(M,\varphi)$ is a model of the universal theory of $(\R_\infty,\varphi_\infty)$.
\end{thm}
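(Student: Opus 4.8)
The plan is to deduce the statement from Theorem~\ref{AHW}, the observations recorded just above it, and the state-homogeneity of $\R_\infty^\u$ supplied by \cite[Theorem~4.20]{AH12}, finishing with the standard translation of ``embeds into an ultrapower'' into a universal-theory statement. For the direction CEP $\Rightarrow$ (embedding statement), I would assume the CEP, fix a W$^*$-probability space $(M,\varphi)$, and invoke Theorem~\ref{AHW} to get an embedding $\iota\colon M\hookrightarrow\R_\infty^\u$ with normal expectation $\varepsilon\colon\R_\infty^\u\to M$. Since $\varepsilon\circ\iota=\id_M$, the functional $\psi:=\varphi\circ\varepsilon$ is (as already noted) a faithful normal state on $\R_\infty^\u$, and $(\iota,\varepsilon)$ is then an embedding $(M,\varphi)\hookrightarrow(\R_\infty^\u,\psi)$ of W$^*$-probability spaces, both maps being state-preserving straight from the definition of $\psi$. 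Now fix any state $\varphi_\infty$ on $\R_\infty$; the ultrapower state $\varphi_\infty^\u$ is again a faithful normal state on $\R_\infty^\u$, so \cite[Theorem~4.20]{AH12} provides a unitary $u\in\R_\infty^\u$ with $u\psi u^*=\varphi_\infty^\u$, and conjugation by $u$ is a state-preserving isomorphism $(\R_\infty^\u,\psi)\cong(\R_\infty^\u,\varphi_\infty^\u)$. Composing, $(M,\varphi)$ embeds into $(\R_\infty^\u,\varphi_\infty^\u)=(\R_\infty,\varphi_\infty)^\u$.

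For the converse it suffices to use the statement for \emph{some} fixed state $\varphi_\infty$ on $\R_\infty$: given an arbitrary tracial von Neumann algebra $(N,\tau)$, regarded as a W$^*$-probability space, the hypothesis yields an embedding of $(N,\tau)$ into $(\R_\infty,\varphi_\infty)^\u=(\R_\infty^\u,\varphi_\infty^\u)$, in particular an embedding of $N$ into $\R_\infty^\u$ with expectation. By Kirchberg's work \cite{K} (recalled in the next section) this forces $N$ to have the QWEP, and an algebra with QWEP embeds tracially into $\R^\u$; since $(N,\tau)$ was arbitrary, the CEP follows. The conjugation argument of the previous paragraph also shows that, for a fixed $(M,\varphi)$, embeddability into $(\R_\infty,\varphi_\infty)^\u$ does not depend on $\varphi_\infty$ (any two of the states $\varphi_\infty^\u$ are unitarily conjugate inside $\R_\infty^\u$), which accounts for the ``some (equiv.\ any)'' clause and completes the equivalence.

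Finally, I would read off the model-theoretic reformulation from the general equivalence recalled earlier — a structure embeds into an ultrapower of $N$ exactly when it models $\Th_\forall(N)$ — applied in the first-order language for W$^*$-probability spaces in which the continuous ultraproduct reproduces the Ocneanu ultraproduct (Dabrowski's axiomatization \cite{dabrowski}, cf.\ \cite{wstar}); the only caveat is that a non-separable $(M,\varphi)$ may force one to replace an ultrafilter on $\bb N$ by a good ultrafilter, which is harmless since the CEP is insensitive to passing to separable subalgebras. I expect the step demanding the most care to be the bookkeeping that matches ``embedding with expectation'' in Theorem~\ref{AHW} with ``embedding of W$^*$-probability spaces'': the expectation $\varepsilon$ is exactly the device that transports $\varphi$ to a compatible faithful normal state $\psi=\varphi\circ\varepsilon$ on $\R_\infty^\u$, after which \cite[Theorem~4.20]{AH12} renders that choice canonical up to inner automorphism. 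Beyond this, there is no genuine obstacle: the deep inputs — Theorem~\ref{AHW}, the QWEP characterization of tracial embeddability into $\R^\u$, and the state-homogeneity of $\R_\infty^\u$ — are all quoted rather than reproved.
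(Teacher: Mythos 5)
Your proposal is correct and follows essentially the same route as the paper: you invoke Theorem~\ref{AHW} for the forward direction, transport $\varphi$ via $\psi=\varphi\circ\varepsilon$, apply the state-homogeneity of $\R_\infty^\u$ from \cite[Theorem~4.20]{AH12} to make the choice of state on $\R_\infty$ irrelevant, and for the converse use Kirchberg's QWEP characterization to pass from embedding with expectation into $\R_\infty^\u$ back to tracial embeddability into $\R^\u$, exactly as the paper does.
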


In \cite{QWEPundec}, we obtain a G\"odelian refutation\footnote{This is indeed a G\"odelian refutation as the axioms for being a W$^*$-probability space are c.e.} of this formulation of the CEP by showing the following:

\begin{thm}
The $(\R_\infty,\varphi_\infty)$EP has a negative solution.
\end{thm}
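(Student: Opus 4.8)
The plan is to transfer the G\"odelian refutation of the $\R$EP (Corollary in Subsection 4.3) to the setting of W$^*$-probability spaces by way of the equivalence recorded in Theorem \ref{AHW}, only keeping careful track of effectivity. First I would fix once and for all a faithful normal state $\varphi_\infty$ on $\R_\infty$ and recall, from the discussion just above the statement, that (thanks to \cite[Theorem 4.20]{AH12}) the universal theory of $(\R_\infty,\varphi_\infty)$ does not depend on the choice of $\varphi_\infty$; so ``the'' $(\R_\infty,\varphi_\infty)$EP is well defined. The heart of the matter is the following dichotomy, mirroring the argument for $\R$: since $(\R_\infty,\varphi_\infty)$ is computably presentable as a W$^*$-probability space (here I would invoke the axiomatization of \cite{dabrowski}, or the simpler one of \cite{wstar}, together with a computable presentation of $\R_\infty$ analogous to \cite[Remark 3.11]{hyper}), its universal theory is automatically left c.e.; thus a negative solution to the $(\R_\infty,\varphi_\infty)$EP will follow once we show that $\Th_\forall(\R_\infty,\varphi_\infty)$ is \emph{not} right c.e., via the Completeness theorem \ref{completeness} exactly as in Corollary \ref{godel1}: if there were a c.e. theory $T$ with $(\R_\infty,\varphi_\infty)\models T$ and all models of $T$ embedding into $(\R_\infty,\varphi_\infty)^\u$, then $\sup\{\sigma^M : M\models T\}=\sigma^{(\R_\infty,\varphi_\infty)}$ for every universal $\sigma$, making $\Th_\forall(\R_\infty,\varphi_\infty)$ computable.

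Next I would reduce the non-computability of $\Th_\forall(\R_\infty,\varphi_\infty)$ to the already-established non-computability of $\Th_\forall(\R)$ (Theorem \ref{main}). The bridge is that a tracial von Neumann algebra $N$ embeds tracially into $\R^\u$ if and only if $(N,\tau_N)$, viewed as a W$^*$-probability space, embeds (with expectation, state-preservingly) into $(\R_\infty,\varphi_\infty)^\u$; the forward direction is immediate (compose a tracial embedding $N\hookrightarrow\R^\u$ with the canonical expectation-embedding $\R^\u\hookrightarrow\R_\infty^\u$ coming from Theorem \ref{AHW}), and the backward direction is the footnoted fact that a tracial von Neumann algebra embedding into $\R_\infty^\u$ with expectation has QWEP and hence embeds tracially into $\R^\u$. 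Because the class of tracial von Neumann algebras sits inside W$^*$-probability spaces as a \emph{relatively definable} subclass (being tracial is a single universal-type condition $\sup_{x,y}|\varphi(xy)-\varphi(yx)|=0$ that is preserved under the Ocneanu ultrapower), any universal sentence in the language of tracial von Neumann algebras can be computably translated into a universal sentence in the language of W$^*$-probability spaces whose value on $(\R_\infty,\varphi_\infty)$ equals its value on $\R$. Running this translation, a hypothetical algorithm computing $\Th_\forall(\R_\infty,\varphi_\infty)$ would yield one computing $\Th_\forall(\R)$, contradicting Theorem \ref{main}; equivalently, one directly builds, from the universal sentences $\Psi_{\frak G,\eta}$ of Subsection 4.3 that capture $\val^{*,s}(\frak G)$ over $\R$, corresponding universal sentences over $(\R_\infty,\varphi_\infty)$ and appeals to Theorem \ref{mipsync}.

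Carrying this out, the formal statement then follows: if the $(\R_\infty,\varphi_\infty)$EP had a positive solution, witnessed by a c.e. theory $T$, then $\Th_\forall(\R_\infty,\varphi_\infty)$ would be computable by the Completeness-theorem argument above, contradicting the non-computability just deduced; and as remarked in the text, the axioms for W$^*$-probability spaces being c.e., this is genuinely a G\"odelian refutation. I expect the main obstacle to be purely bookkeeping of a model-theoretic nature: namely verifying that ``tracial von Neumann algebra'' is captured inside the language of W$^*$-probability spaces by a condition that is (i) universal and c.e., and (ii) correctly interacts with the Ocneanu ultrapower so that $X(n,\R)$-style definable sets, and hence the sentences $\Psi_{\frak G,\eta}$, transport without loss of effectivity — in particular that the expectation in the definition of a W$^*$-embedding does not spoil the quantitative statements. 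Once that translation is in place, every remaining step is a verbatim replay of Theorem \ref{CEPcomp}, Corollary \ref{godel1}, and Theorem \ref{main}. For the details we refer to \cite{QWEPundec}.
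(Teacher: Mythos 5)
Your high-level strategy matches the paper's: assume a c.e. theory $T$ witnessing a positive $(\R_\infty,\varphi_\infty)$EP, and reduce to the already-established negative solution of the $\R$EP using the key bridge fact that a tracial von Neumann algebra embeds into $\R_\infty^\u$ with expectation if and only if it tracially embeds into $\R^\u$. This is precisely the ``basic idea'' the paper attributes to the proof, and you correctly cite the footnoted QWEP fact as the backward direction.

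However, the way you propose to actually carry out the reduction has a suspect step. You claim that a universal sentence in the language of tracial von Neumann algebras ``can be computably translated into a universal sentence in the language of W$^*$-probability spaces whose value on $(\R_\infty,\varphi_\infty)$ equals its value on $\R$.'' As stated, this is not correct: $\R_\infty$ is a type III$_1$ factor and $\varphi_\infty$ is not a trace, so quantifiers over the whole of $\R_\infty$ in a translated formula will range over a vastly larger algebra than $\R$, with a non-tracial state; there is no reason the resulting supremum should coincide with $\sigma^\R$. (A cheap witness: the universal sentence $\sup_{x,y}|\tau(xy)-\tau(yx)|$ evaluates to $0$ on $\R$ but to something strictly positive on $(\R_\infty,\varphi_\infty)$.) The mechanism you seem to be reaching for --- relativizing quantifiers to an effectively definable copy of $\R$ inside the ambient algebra, so that the $\Psi_{\frak G,\eta}$ of Subsection 4.3 transport --- is essentially the centralizer trick, but the paper deliberately reserves that argument for the $\R_\lambda$ case with the lacunary Powers states (where the centralizer \emph{is} $\R$ and is effectively definable); it is not available in the type III$_1$ case.

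The paper's reduction for $\R_\infty$ operates at the level of theories rather than at the level of translating individual universal sentences: from the c.e.\ $L_{W^*}$-theory $T$ witnessing a positive $(\R_\infty,\varphi_\infty)$EP, one manufactures a c.e.\ theory $T'$ in the language of tracial von Neumann algebras (roughly, $T$ together with the traciality and II$_1$-factor axioms, after massaging the language issues the paper alludes to) whose models are tracial, satisfy $T$, hence embed into $\R_\infty^\u$, hence by the bridge fact into $\R^\u$; this feeds into the corollary stating that no c.e.\ extension of the II$_1$-factor axioms can force embeddability into $\R^\u$. The detour through ``$\Th_\forall(\R_\infty,\varphi_\infty)$ is computable implies $\Th_\forall(\R)$ is computable'' that you sketch is not obviously true and is not what the paper does; you should replace it with the theory-level reduction, and then the remaining work really is the bookkeeping you anticipated (that the language of W$^*$-probability spaces admits an effective fragment in which ``tracial'' is a c.e.\ universal condition compatible with the Ocneanu ultraproduct).
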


The basic idea of the proof stems from the fact, mentioned later on, that a tracial von Neumann algebra embeds into $\R_\infty^\u$ as a W$^*$-probability space if and only if it embeds into $\R^\u$ as a tracial von Neumann algebra, whence one could leverage a positive solution to the $(\R_\infty,\varphi_\infty)$EP to obtain a positive solution to the $\R$EP.  That being said, there are certain complications arising from the language used to describe W$^*$-probability spaces that must be dealt with. 

Since $\R_\infty$ embeds into any type III$_1$ factor \cite[Theorem 3.5]{HM}, we can strengthen the previous theorem (as we did in the case of tracial von Neumann algebras) by stating that there is no c.e. set of axioms extending the axioms for being a W$^*$-probability space which has at least one model that is a type III$_1$ factor and all of whose models embed into $\R_\infty^\u$.

Ando, Haagerup, and Winslow showed that Theorem \ref{AHW} remains true if $\R_\infty$ is replaced by $\R_\lambda$ for any $\lambda\in (0,1)$.  This time, the universal theory of $\R_\lambda$, viewed as a W$^*$-probability space, is no longer independent on the choice of state.  Nevertheless, there is a canonical state $\varphi_\lambda$ on $\R_\lambda$, known as the \textbf{Powers' state}.  In \cite{QWEPundec}, we showed:

\begin{thm}
The $(\R_\lambda,\varphi_\lambda)$EP has a negative solution.
\end{thm}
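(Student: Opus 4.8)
The plan is to mimic the proof of the $(\mathcal R_\infty,\varphi_\infty)$EP, transferring a hypothetical positive solution to the $(\mathcal R_\lambda,\varphi_\lambda)$EP down to a positive solution to the $\mathcal R$EP, which has already been refuted in Theorem \ref{main} and its corollaries. First I would recall the key bridge fact: a tracial von Neumann algebra $(N,\tau)$ embeds into the Ocneanu ultrapower $\mathcal R_\lambda^{\mathcal U}$ as a W$^*$-probability space (with respect to the ultrapower of the Powers' state) if and only if $N$ has QWEP, which in turn holds if and only if $N$ embeds tracially into $\mathcal R^{\mathcal U}$ — this is the analogue for $\mathcal R_\lambda$ of the fact used in the $\mathcal R_\infty$ case, and follows from the Ando--Haagerup--Winslow version of Theorem \ref{AHW} for $\mathcal R_\lambda$ together with Kirchberg's QWEP characterization. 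Thus, if there were a c.e.\ set $T$ of sentences in the language of W$^*$-probability spaces with $(\mathcal R_\lambda,\varphi_\lambda)\models T$ and every model of $T$ embedding into $(\mathcal R_\lambda,\varphi_\lambda)^{\mathcal U}$, then in particular every tracial von Neumann algebra satisfying (the W$^*$-probability space reduct of) $T$ would have QWEP and hence embed into $\mathcal R^{\mathcal U}$.

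Next I would carry out the language-translation step, which is the delicate part. One has two different languages in play: the language $L_{\mathrm{tr}}$ of tracial von Neumann algebras (metric given by $\|\cdot\|_2$) and the language $L_{W^*}$ of W$^*$-probability spaces (with the Ocneanu ultraproduct as its ultraproduct). Given the c.e.\ theory $T\subseteq L_{W^*}$ as above, I would produce a c.e.\ theory $T'\subseteq L_{\mathrm{tr}}$ such that $\mathcal R\models T'$ and every model of $T'$ embeds tracially into $\mathcal R^{\mathcal U}$, thereby contradicting the corollary that the $\mathcal R$EP fails. Concretely, $T'$ would consist of $T_{II_1}$ together with effective translations of the axioms in $T$: a tracial von Neumann algebra $(N,\tau)$ is canonically a W$^*$-probability space $(N,\tau)$, and any $L_{W^*}$-sentence can be interpreted as an $L_{\mathrm{tr}}$-condition on $N$, but one must check that this interpretation is \emph{computable} and that it correctly captures the property "$\mathcal R\models T$ and models of $T$ embed into $(\mathcal R_\lambda,\varphi_\lambda)^{\mathcal U}$ as W$^*$-probability spaces." The point is that $(\mathcal R,\tau_{\mathcal R})$, viewed in $L_{W^*}$, is a model of $T$ (since $\mathcal R$ trivially has QWEP and embeds into itself), so $\mathcal R\models T'$; and if $(N,\tau)\models T'$ then $(N,\tau)$ as a W$^*$-probability space models $T$, hence embeds with expectation into $(\mathcal R_\lambda,\varphi_\lambda)^{\mathcal U}$, hence has QWEP, hence embeds tracially into $\mathcal R^{\mathcal U}$.

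The main obstacle I anticipate is precisely this careful handling of the two ultraproduct constructions and the two metrics: on a general tracial von Neumann algebra the $\|\cdot\|_2$-topology and the strong$^*$-topology induced by the (tracial) state agree, so the Ocneanu ultrapower of $(N,\tau)$ reduces to the tracial ultrapower and the W$^*$-probability-space reduct functor is well-behaved on ultraproducts; but one must verify that the c.e.\ presentation of $T$ pulls back effectively through this reduct, i.e.\ that there is no loss of computability in translating $L_{W^*}$-formulas (which may involve the modular-theoretic apparatus implicit in Dabrowski's axiomatization) into $L_{\mathrm{tr}}$-formulas. Here I would invoke the "much simpler axiomatization" of \cite{wstar} (or work directly with Dabrowski's \cite{dabrowski}) to make the translation explicit, and use the Powers' state $\varphi_\lambda$ as the distinguished state throughout so that no choice of state is swept under the rug. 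Assuming these bookkeeping issues are handled, the conclusion is immediate: the existence of the c.e.\ theory $T$ witnessing a positive solution to the $(\mathcal R_\lambda,\varphi_\lambda)$EP would contradict the failure of the $\mathcal R$EP established above, so the $(\mathcal R_\lambda,\varphi_\lambda)$EP has a negative solution.
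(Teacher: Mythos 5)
There is a genuine gap at the pivotal step. You assert that $(\R,\tau_\R)$, viewed as a W$^*$-probability space, is a model of $T$, with the justification that ``$\R$ trivially has QWEP and embeds into itself.'' This is a non sequitur. By hypothesis, $T$ is a c.e.\ set of $L_{W^*}$-sentences such that $(\R_\lambda,\varphi_\lambda)\models T$ and every model of $T$ embeds with expectation into $(\R_\lambda,\varphi_\lambda)^\u$. Nothing in this hypothesis tells you which \emph{other} structures model $T$; in particular there is no converse implication ``embeds into $(\R_\lambda,\varphi_\lambda)^\u$ $\Rightarrow$ models $T$.'' (Compare the classical case: the theory $\Th(\mathbb Z)$ witnesses the ``$\mathbb Z$EP,'' yet $2\mathbb Z$ embeds into every ultrapower of $\mathbb Z$ while failing $\Th(\mathbb Z)$.) Without $(\R,\tau_\R)\models T$ you cannot conclude $\R\models T'$, and the whole reduction to the failure of the $\R$EP collapses: you get the upper bound $\sup\{\sigma^{(M,\tau)}:(M,\tau)\models T'\}\leq\sigma^\R$ (every tracial model of $T'$ has QWEP), but not the matching lower bound, since you have exhibited no tracial model of $T'$ containing a copy of $\R$.

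This is precisely why the paper does \emph{not} run the $\R_\lambda$ case as a straight copy of the $\R_\infty$ case. The new ingredient is the centralizer: for a lacunary state $\varphi$ (such as the Powers' state $\varphi_\lambda$), the centralizer $M_\varphi$ is an \emph{effectively definable} subset of $M$ in $L_{W^*}$, and $(\R_\lambda)_{\varphi_\lambda}=\R$ with $\varphi_\lambda$ restricting to $\tau_\R$. One therefore relativizes a universal $L_{\mathrm{tr}}$-sentence $\sigma$ to the centralizer, producing (effectively) a universal $L_{W^*}$-sentence $\hat\sigma$ with $\hat\sigma^{(M,\varphi)}=\sigma^{M_\varphi}$. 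Then $\hat\sigma^{(\R_\lambda,\varphi_\lambda)}=\sigma^\R$ supplies the lower bound from the model $(\R_\lambda,\varphi_\lambda)\models T$ \emph{that you actually have}, while for any $(M,\varphi)\models T$ the embedding into $(\R_\lambda,\varphi_\lambda)^\u$ carries $M_\varphi$ (tracially, since $\varphi\restriction M_\varphi$ is a trace) into the centralizer of $\varphi_\lambda^\u$, which by definability equals $\R^\u$; this yields the upper bound. Running proofs from $T$ then gives right-c.e.\ upper bounds for $\sigma^\R$, contradicting Theorem~\ref{main}. In short: the lower bound must come from $(\R_\lambda,\varphi_\lambda)$ itself via the definable centralizer, not from an unjustified claim that $(\R,\tau_\R)$ models $T$.
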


This time, the key idea is that, for certain kinds of faithful, normal states known as \textbf{lacunary states} (of which the Powers' states $\varphi_\lambda$ are an example), the \textbf{centralizer} $M_\varphi$ of $M$, defined to be those $x\in M$ for which $\varphi(xy)=\varphi(yx)$ for all $y\in M$, is an effectively definable subset of $M$.  Since the centralizer of $\varphi_\lambda$ is $\R$, one can leverage a positive solution to the $(\R_\lambda,\varphi_\lambda)$EP to obtain a positive solution to the $\R$EP.  Once again, there are a number of technicalities to be dealt with and the interested reader can consult \cite{QWEPundec}.

\section{Applications to \cstar-algebras}

In this section, we show how the ideas from the previous section can be used to establish a number of incompleteness results for \cstar-algebras.  For the sake of simplicity, in this section, all \cstar-algebras are assumed to be unital and all $*$-homomorphisms are assumed to be unit-preserving.

\subsection{Passing to the GNS closure}\label{GNSsection}

In this subsection, we show how to use the fact that the $\R$EP has a negative solution to show that the $A$EP has a negative solution for certain \cstar-algebras $A$.  

Suppose that $A$ is a \cstar-algebra for which the $A$EP has a positive solution, say $T_0$ is a c.e. set of sentences such that $A\models T_0$ and all models of $T_0$ embed into $A^\u$.  In order to connect to the $\R$EP, we assume that $A$ possesses a tracial state $\tau_A$.  

In the remainder of this section, $\sigma$ denotes a universal sentence in the language of tracial von Neumann algebras.  In order to obtain a contradiction, we wish to show the following two things:
\begin{enumerate}
    \item $\sigma^{(A,\tau_A)}=\sigma^{(\R,\tau_\R)}$, where on the left hand side of the equation, we view $\sigma$ as a sentence in the language of tracial \cstar-algebras.
    \item There is a c.e. set $T\supseteq T_0$ in the language of tracial \cstar-algebras such that $\sigma^{(A,\tau_A)}=\sup\{\sigma^{(M,\tau)} \ : \ (M,\tau)\models T\}$.
\end{enumerate}

If we are successful, then by item (2) and the Completeness Theorem, $\sigma^{(A,\tau_A)}$ is right c.e. uniformly in $\sigma$, and thus so is $\sigma^{(\R,\tau_R)}$ by (1), contradicting Theorem \ref{main}.  

To achieve (1), we first let $(N,\tau_N)$ denote the GNS closure of $(A,\tau_A)$.  We note that if $A$ is simple, then $\tau_A$ is faithful and so $A$ embeds into $N$, whence $\sigma^{(A,\tau_A)}=\sigma^{(N,\tau_N)}$.  (Simplicity gives that $\sigma^{(A,\tau_A)}\leq\sigma^{(N,\tau_N)}$, while equality follows from the fact that $A$ is $\|\cdot\|_2$-dense in $N$.)   In order to finish the proof of (1), we wish to show that $\sigma^{(N,\tau_N)}=\sigma^{(\R,\tau_R)}$.  \emph{If} $N$ is a II$_1$ factor, then $\R$ embeds into $N$ and thus $\sigma^{(N,\tau_N)}\geq \sigma^{(\R,\tau_R)}$.  To guarantee that $N$ is a factor, it suffices to ensure that $N$ has a unique trace, which would follow if $A$ was assumed to have a unique trace (otherwise known as being \textbf{monotracial}).  If moreover $A$ is infinite-dimensional, then so is $N$, and thus $N$ is a II$_1$ factor.  The other inequality $\sigma^{(N,\tau_N)}\leq \sigma^{(\R,\tau_R)}$ would follow \emph{if} we assumed that $N$ embeds into $\R^\u$.

To summarize the discussion thus far:  if $A$ is a simple, infinite-dimensional, monotracial \cstar-algebra whose GNS closure $N$ embeds into $\R^\u$, then (1) is satisfied.

How can we ensure that (2) holds?  To begin with, we should assume that $T$ contains $T_0$ and the axioms for being a tracial \cstar-algebra.  We also require that $(A,\tau_A)\models T$ so that the left-hand side of (2) is at most the right-hand side of (2).  To achieve the other inequality, suppose that $(B,\tau)\models T$; we wish to show that $\sigma^{(B,\tau)}\leq \sigma^{(A,\tau_A)}$.  Since $B\models T_0$, we know that $B$ embeds into $A^\u$.  \emph{If} this embedding was trace-preserving (where $A^\u$ is equipped with the trace $\tau_A^\u$), then we would have the desired inequality.  The embedding would be trace-preserving if $B$ itself were monotracial.  So it would seem that, in order to finish the proof of (2), we should require that all models of $T$ be monotracial.  Unfortunately, it is currently unknown whether or not being monotracial is axiomatizable.  However, there is a first-order condition, called the \textbf{$(m,\gamma)$-uniform Dixmier property} (for some choice of integer $m$ and positive constant $\gamma$), axiomatized in fact by a single sentence $\theta_{m,\gamma}$, such that any model of $\theta_{m,\gamma}$ that admits a trace is in fact monotracial; see \cite[Section 7.2]{munster} for more details.

We have thus arrived at the following conclusion, where having the uniform Dixmier property means having the $(m,\gamma)$-uniform Dixmier property for some choice of parameters $(m,\gamma)$:

\begin{thm}
Suppose that $(A,\tau_A)$ is a simple, infinite-dimensional, tracial C*-algebra with the uniform Dixmier property whose GNS closure embeds into an ultrapower of $\cal R$.  Then the $A$EP has a negative solution.
\end{thm}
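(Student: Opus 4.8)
The plan is to combine the reduction scheme developed in this subsection with the two helper facts already recorded: the negative solution to the $\R$EP (which follows from Theorem~\ref{main} via Corollary~\ref{godel1}) and the existence of the sentence $\theta_{m,\gamma}$ axiomatizing the $(m,\gamma)$-uniform Dixmier property. Suppose, for contradiction, that the $A$EP has a positive solution, witnessed by a c.e. set $T_0$ with $A\models T_0$ and all models of $T_0$ embedding into $A^\u$. I would then define $T$ to be the union of $T_0$, the (c.e.) axioms for being a tracial \cstar-algebra, and the single sentence $\theta_{m,\gamma}$ for the parameters coming from the hypothesis that $A$ has the uniform Dixmier property; this $T$ is c.e., contains $T_0$, and has $(A,\tau_A)$ as a model.

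First I would establish item (1), i.e. $\sigma^{(A,\tau_A)}=\sigma^{(\R,\tau_\R)}$ for every universal sentence $\sigma$ in the language of tracial von Neumann algebras. Let $(N,\tau_N)$ be the GNS closure of $(A,\tau_A)$. Simplicity of $A$ forces $\tau_A$ to be faithful, so $A\hookrightarrow N$ is a $\|\cdot\|_2$-dense inclusion and hence $\sigma^{(A,\tau_A)}=\sigma^{(N,\tau_N)}$ (the inequality $\leq$ is the embedding, the reverse is density of $A$ in $N$ in the metric defining the structure). Since $A$ is monotracial (a consequence of simplicity plus the uniform Dixmier property) and infinite-dimensional, $N$ is an infinite-dimensional factor with a trace, hence a II$_1$ factor; therefore $\R$ embeds into $N$ and $\sigma^{(N,\tau_N)}\geq\sigma^{(\R,\tau_\R)}$. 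The reverse inequality $\sigma^{(N,\tau_N)}\leq\sigma^{(\R,\tau_\R)}$ is exactly where the hypothesis ``the GNS closure embeds into an ultrapower of $\R$'' is used: a universal sentence cannot increase in value upon passing to a substructure, and its value on $\R^\u$ equals its value on $\R$ by \L o\'s's theorem. Combining these gives (1).

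Next I would establish item (2): $\sigma^{(A,\tau_A)}=\sup\{\sigma^{(M,\tau)}:(M,\tau)\models T\}$. Since $(A,\tau_A)\models T$, the left side is $\leq$ the right side. For the reverse, take any $(B,\tau)\models T$. From $B\models T_0$ we get an embedding $B\hookrightarrow A^\u$ of \cstar-algebras. From $B\models\theta_{m,\gamma}$ and the fact that $B$ admits a trace, $B$ is monotracial; hence the embedding $B\hookrightarrow A^\u$ is automatically trace-preserving with respect to $\tau$ on $B$ and $\tau_A^\u$ on $A^\u$ (a monotracial \cstar-algebra has no choice of trace, and $\tau_A^\u$ restricted to the image is such a trace). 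A universal sentence $\sigma$ therefore satisfies $\sigma^{(B,\tau)}\leq\sigma^{(A^\u,\tau_A^\u)}=\sigma^{(A,\tau_A)}$, the last equality again by \L o\'s. This proves (2).

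With (1) and (2) in hand I would conclude as in the surrounding text: by (2) and the Completeness Theorem~\ref{completeness}, $\sigma^{(A,\tau_A)}=\inf\{r\in\bb Q^{>0}:T\vdash\sigma\dotminus r\}$, and since $T$ is c.e. the set of such $r$ is c.e., so $\sigma^{(A,\tau_A)}$ is right c.e. uniformly in $\sigma$; by (1) the same holds for $\sigma^{(\R,\tau_\R)}$. Together with the uniform left-c.e.-ness of $\Th_\forall(\R)$ (from the computable presentability of $\R$) this makes $\Th_\forall(\R)$ computable, contradicting Theorem~\ref{main}. I expect the only delicate point to be the automatic trace-preservation of the embedding $B\hookrightarrow A^\u$ in step (2) --- one must check that $\tau_A^\u$ genuinely restricts to a faithful trace on the copy of $B$, so that monotraciality of $B$ can be invoked --- and, relatedly, confirming that $\theta_{m,\gamma}$ really is a single sentence in the language of tracial \cstar-algebras whose tracial models are monotracial, which is precisely what is cited from \cite[Section 7.2]{munster}. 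Everything else is a routine assembly of facts already available in the excerpt.
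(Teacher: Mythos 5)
Your proof is correct and follows the same route the paper takes in the surrounding subsection; the paper's §5.1 derives the theorem discursively (building up the hypotheses one needs), and you have simply reassembled that derivation into a direct argument, including the same use of $\theta_{m,\gamma}$ to force monotraciality of all models of $T$, the same chain of inequalities through the GNS closure $N$, and the same appeal to Completeness plus the non-computability of $\Th_\forall(\R)$. One small remark: the concern you flag about $\tau_A^\u$ restricting to a \emph{faithful} trace on the copy of $B$ is not actually needed --- monotraciality of $B$ gives uniqueness of the tracial state on $B$, so $\tau_A^\u\circ\iota=\tau$ follows from $\tau_A^\u\circ\iota$ merely being a tracial state; faithfulness plays no role in that step.
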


In the next section, we turn to a particular class of examples where the GNS closure embeds into $\R$ itself; in the section that follows, we will consider the condition in full generality.

\subsection{Nuclear \cstar-algebras}\label{nuclearsection}

One of the main technical issues in \cstar-algebra theory is the question of how to take the tensor product of two \cstar-algebras.  More precisely, given \cstar-algebras $A$ and $B$, one can consider their ``algebraic tensor product'' $A\odot B$, which is simply their tensor product as vector spaces.  It is straightforward to equip $A\odot B$ with a $*$-algebra structure; the question then becomes:  how should one define a \emph{\cstar-norm} on $A\odot B$, that is, a norm on $A\odot B$ whose completion is a \cstar-algebra?

There are two natural \cstar-norms on $A\odot B$.  The first, called the \textbf{minimal tensor norm}, denoted $\|\cdot\|_{\min}$, is defined by first concretely representing $A\subseteq B(H_A)$ and $B\subseteq B(H_B)$, in which case we have the concrete representation $A\odot B\subseteq B(H_A\otimes H_B)$.  The minimal norm $\|\cdot\|_{\min}$ is the restriction of the operator norm on $B(H_A\otimes H_B)$ to $A\odot B$; it can be verified that this norm is independent of the choice of representations of $A$ and $B$.  The completion of $A\odot B$ with respect to $\|\cdot\|_{\min}$ is called the \textbf{minimal tensor product} of $A$ and $B$, denoted $A\otimes_{\min}B$ or sometimes even $A\otimes B$ as it is perhaps the most commonly considered tensor product on \cstar-algebras.

The second norm on $A\odot B$ is called the \textbf{maximal tensor norm}, denoted $\|\cdot\|_{\max}$, and is defined by $\|x\|_{\max}:=\sup\{\|\pi(x)\| \ : \ \pi:A\odot B\to B(H) \text{ a $*$-homomorphism}\}$.  The completion of $A\odot B$ with respect to $\|\cdot\|_{\max}$ is called the \textbf{maximal tensor product} of $A$ and $B$, denoted $A\otimes_{\max}B$.

The minimal and maximal tensor norms get their name from the fact that, for any other \cstar-norm $\|\cdot\|_\alpha$ on $A\odot B$, we have that $\|\cdot\|_{\min}\leq \|\cdot\|_\alpha\leq \|\cdot\|_{\max}$.

In general, the norms $\|\cdot\|_{\min}$ and $\|\cdot\|_{\max}$ do not coincide.  When they do coincide (in other words, when there is a unique tensor norm on $A\odot B$), we call $(A,B)$ a \textbf{nuclear pair}.  When $(A,B)$ is a nuclear pair for all \cstar-algebras $A$, we say that $A$ is a \textbf{nuclear} \cstar-algebra.  All finite-dimensional \cstar-algebras are nuclear as are all abelian \cstar-algebras.  The reduced group \cstar-algebra $C^*_r(G)$ is nuclear if and only if $G$ is amenable.

The class of nuclear \cstar-algebras is an intensely studied class of algebras for in many circumstances they can be classified in terms of concrete invariants.  The class can alternatively be defined as those algebras that can be approximated by so-called completely positive maps into and out of matrix algebras; this description of nuclear algebras allows one to show that they form an omitting types class  \cite[Theorem 5.7.3(3)]{munster}.

The connection between nuclearity and the discussion in the previous section is the following:

\begin{fact}
If $(A,\tau)$ is a tracial nuclear \cstar-algebra, then the GNS closure of $(A,\tau)$ is hyperfinite.
\end{fact}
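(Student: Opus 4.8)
The plan is to show that the GNS closure $N:=\pi_\tau(A)''$ is an injective (equivalently, semidiscrete) von Neumann algebra, and then to invoke Connes' theorem that a \emph{finite} injective von Neumann algebra is hyperfinite. Recall that, as noted earlier in the excerpt, the vector state at $\hat 1$ induces a faithful normal trace $\tau_N$ on $N$, so $(N,\tau_N)$ is a tracial (hence finite) von Neumann algebra; thus the only thing that needs work is injectivity of $N$.

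For injectivity I would use the classical circle of ideas identifying nuclearity with semidiscreteness/injectivity of the bidual. Concretely: $A$ is nuclear if and only if $A^{**}$ is injective (Choi--Effros / Effros--Lance / Connes). Now the GNS representation $\pi_\tau$ extends to a normal $*$-homomorphism $\tilde\pi_\tau\colon A^{**}\to B(H_\tau)$ whose image is exactly $N$ and whose kernel, being a weak$^*$-closed two-sided ideal, has the form $(1-z)A^{**}$ for a central projection $z\in A^{**}$. Hence $N\cong zA^{**}$ is a direct summand of the injective von Neumann algebra $A^{**}$, and direct summands of injective von Neumann algebras are injective. (Alternatively, and more in the spirit of the ``approximation by completely positive maps into and out of matrix algebras'' description of nuclearity recalled just above: one can transfer the completely positive approximation property of $A$ to $N$, extending the finite-dimensional u.c.p.\ maps $\phi_i\colon A\to M_{n_i}(\bb C)$ to normal u.c.p.\ maps at the level of $N$, composing with $\pi_\tau\circ\psi_i$, and using Kadison--Schwarz $\|\cdot\|_2$-contractivity together with $\|\cdot\|_2$-density of $A$ in $N$ to get point-$\|\cdot\|_2$ convergence to $\id_N$; this exhibits $N$ as semidiscrete, hence injective by Choi--Effros. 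Either way one is appealing to the standard fact that the weak closure of a nuclear C$^*$-algebra in any representation is injective.)

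Finally, $(N,\tau_N)$ is a finite injective von Neumann algebra, so by Connes' theorem that injective von Neumann algebras are approximately finite dimensional (reducing to the separably acting case by passing to separable subalgebras if $A$ is nonseparable, or citing the general form directly) we conclude that $N$ is hyperfinite. The genuine content here is this deep theorem of Connes --- fittingly, the implication ``injective $\Rightarrow$ hyperfinite'' is itself proved in the very paper \cite{Connes} in which the CEP was first raised --- so invoking it correctly is the main point; the remaining steps (the identification $N\cong zA^{**}$, or equivalently the bookkeeping showing semidiscreteness passes to the weak closure in the tracial GNS representation) are routine.
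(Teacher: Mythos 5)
The paper states this as a \emph{Fact} without supplying a proof, so there is no in-paper argument to compare against. Your argument is correct and is the standard one: reduce to Connes' theorem that a finite injective von Neumann algebra is hyperfinite, and obtain injectivity of the GNS closure $N$ either by realizing $N\cong zA^{**}$ as a corner of the injective bidual (Choi--Effros/Connes: $A$ nuclear iff $A^{**}$ injective) or, equivalently, by pushing the completely positive approximation property through the GNS representation to get semidiscreteness of $N$. The paper's preceding remark about nuclearity being characterized by approximation via completely positive maps through matrix algebras is exactly the hook for your second (semidiscreteness) route, so either of your two derivations of injectivity is in the spirit the paper intends. One small point worth keeping in mind: the paper's definition of hyperfinite uses an increasing \emph{sequence} of finite-dimensional subalgebras, so one should either take $A$ separable (as is implicit in all the applications in the surrounding sections, e.g.\ $\mathcal Q$ and $\mathcal Z$), forcing $H_\tau$ and hence the predual of $N$ to be separable so that Connes' theorem applies verbatim, or use the separable-reduction device you mention.
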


\begin{cor}\label{nuclearcorollary}
Suppose that $(A,\tau_A)$ is a simple, infinite-dimensional, \emph{nuclear} tracial C*-algebra with the uniform Dixmier property.  Then the $A$EP has a negative solution.
\end{cor}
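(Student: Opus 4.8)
The plan is to deduce this from the theorem of Subsection~\ref{GNSsection}, which asserts that the $A$EP fails for any simple, infinite-dimensional, tracial \cstar-algebra with the uniform Dixmier property whose GNS closure embeds into an ultrapower of $\R$. Since $(A,\tau_A)$ is already assumed to be simple, infinite-dimensional, and to have the uniform Dixmier property, the only thing left to check is that the GNS closure $N$ of $(A,\tau_A)$ embeds into $\R^\u$; in fact, I would show that $N$ is isomorphic to $\R$ itself.

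First I would run through the chain of implications recorded in Subsection~\ref{GNSsection} in order to see that $N$ is a II$_1$ factor. Because $A$ has the uniform Dixmier property and admits the trace $\tau_A$, it is monotracial, so $N$ carries a unique trace and is therefore a factor. Since $A$ is simple, $\tau_A$ is faithful, so $A$ embeds $\|\cdot\|_2$-densely into $N$; as $A$ is moreover infinite-dimensional, so is $N$, and hence $N$ is a II$_1$ factor.

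Next I would invoke the Fact immediately preceding the corollary: since $(A,\tau_A)$ is a tracial nuclear \cstar-algebra, $N$ is hyperfinite. A hyperfinite II$_1$ factor is isomorphic to $\R$ by the uniqueness of the hyperfinite II$_1$ factor, so $N\cong\R$, and in particular $N$ embeds into $\R^\u$ via the diagonal map. Therefore the hypotheses of the theorem of Subsection~\ref{GNSsection} are met, and the $A$EP has a negative solution.

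There is no genuine obstacle here: the corollary is a direct specialization of the theorem of the previous subsection, the only new input being the quoted Fact, which — combined with the observation that $N$ is also a II$_1$ factor — pins $N$ down as the hyperfinite II$_1$ factor $\R$. (One could even avoid appealing to the uniqueness theorem, since the theorem of Subsection~\ref{GNSsection} requires only that $N$ embed into an ultrapower of $\R$, and a separable hyperfinite tracial von Neumann algebra embeds trace-preservingly into $\R$ already.)
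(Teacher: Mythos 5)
Your proof is correct and matches the paper's (unwritten) derivation: the corollary is meant to follow immediately from the Theorem of Subsection~\ref{GNSsection} once the quoted Fact is invoked, and you spell out exactly that deduction, including the intermediate check (already implicit in the earlier discussion) that the GNS closure is a II$_1$ factor so that hyperfiniteness pins it down as $\R$. The parenthetical remark at the end is also a sound alternative, since embedding into $\R^\u$ is all the earlier Theorem needs.
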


One prominent example of a \cstar-algebra satisfying all of the adjectives in the preceding example is the \textbf{universal UHF algebra} $\mathcal{Q}$, which is the inductive limit (in the category of unital \cstar-algebras) of any sequence 
$$M_{k_1}(\bb C)\hookrightarrow M_{k_2}(\bb C)\hookrightarrow \cdots$$ of matrix algebras such that every positive integer divides some $k_j$.  It follows that the $\mathcal{Q}$EP has a negative solution.  A prominent open problem in \cstar-algebra theory, known as the \textbf{Blackadar-Kirchberg problem} or \textbf{MF problem}, asks if all \textbf{stably finite} \cstar-algebras embed into an ultrapower of $\cal Q$.  Here, a \cstar-algebra $A$ is stably finite if $A$ and all of its matrix amplifications $M_n(A)$ are finite.  (Note that this is a necessary condition for admitting an embedding into $\cal Q^\u$.)  It is straightforward to see that a negative resolution to the CEP implies  a negative resolution of the MF problem; see \cite[Proposition 6.1]{R}.  However, the negative resolution to the $\cal Q$EP can be viewed as a G\"odelian refutation of the MF problem, for being stably finite is effectively axiomatizable, and the $\cal Q$EP shows that no c.e. extension of these axioms can ensure embedability into $\cal Q^\u$.

Another prominent algebra which satisfies the hypotheses of Corollary \ref{nuclearcorollary} is the \textbf{Jiang-Su} algebra $\cal Z$, whch plays a very important role in the classification program for nuclear \cstar-algebras.  A natural analog of the MF problem involving $\cal Q$ would ask that all \textbf{stably projectionless} \cstar-algebras embed into an ultrapower of $\cal Z$.  (See \cite[Remark 6.7]{R} for a precise definition of stably projectionless.)  Since being stably projectionless is an effectively axiomatizable property, a consequence of the negative solution of the $\cal Z$EP is the fact that not all stably projecitonless \cstar-algebras embed into an ultrapower of $\cal Z$.  At the time of writing, a proof of this fact avoiding model-theoretic techniques has not been found. 

\subsection{QWEP \cstar-algebras}\label{QWEPsection}

Given \cstar-algebras $A\subseteq B$ and $C$, while there is always an isometric inclusion $A\otimes_{\min}C\subseteq B\otimes_{\min}C$, the natural map $A\otimes_{\max}C\to B\otimes_{\max} C$ need not always be an isometric inclusion.  Following Lance \cite{lance}, we say that $A$ has the \textbf{weak expectation property (WEP)} if, for any $B\supseteq A$ and any $C$, the natural map $A\otimes_{\max}C\to B\otimes_{\max}C$ is an isometric inclusion.

Kirchberg \cite{K} proved that $A$ has the WEP if and only if $(A,C^*(\bb F_\infty))$ is a nuclear pair, where $\bb F_\infty$ is the nonabelian free group on a countably infinite set of generators.

It thus becomes natural to ask:  does $C^*(\mathbb{F}_\infty)$ itself have the WEP?  Equivalently, is it true that $(C^*(\mathbb{F}_\infty),C^*(\mathbb{F}_\infty))$ is a nuclear pair?  (The analogous statement for $C^*_r(\mathbb{F}_\infty)$ is false.)

If $C^*(\mathbb{F}_\infty)$ has the WEP, then every separable \cstar-algebra is a quotient of a \cstar-algebra with the WEP; we abbreviate the property of being a quotient of a \cstar-algebra with the WEP by saying that the algebra has the \textbf{QWEP}.  Conversely, if every separable \cstar-algebra has the QWEP, then in particular $C^*(\mathbb{F}_\infty)$ has the QWEP.  For \cstar-algebras with the so-called \textbf{lifting property} (such as $ C^*(\bb F_\infty)$), having the QWEP is equivalent to having the WEP.  Moreover, it can be checked that all separable \cstar-algebras have the QWEP if and only if all \cstar-algebras have the QWEP.  Consequently, we see that $C^*(\mathbb{F}_\infty)$ has the WEP if and only if all \cstar-algebra have the QWEP.  A deep theorem of Kirchberg \cite{K} is that these two equivalent statements are in turn equivalent to a positive solution to the CEP:

\begin{thm}
The following are equivalent:
\begin{enumerate}
\item $C^*(\mathbb F_\infty)$ has the WEP.
\item Every \cstar-algebra has the QWEP.
\item CEP has a positive answer.
\end{enumerate}
\end{thm}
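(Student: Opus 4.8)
The plan is to establish $(1)\Leftrightarrow(2)$ by soft arguments and then to bridge this common statement to $(3)$ via the hyperfinite II$_1$ factor.

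The equivalence $(1)\Leftrightarrow(2)$ reorganizes the discussion preceding the theorem. A separable unital \cstar-algebra is generated by countably many unitaries, hence is a quotient of $C^*(\bb F_\infty)$; so if $C^*(\bb F_\infty)$ has the WEP, then every separable \cstar-algebra is a quotient of a WEP algebra, i.e.\ has QWEP, and QWEP for all separable \cstar-algebras is equivalent to QWEP for all \cstar-algebras. Conversely, $(2)$ forces $C^*(\bb F_\infty)$ itself to have QWEP, and since $C^*(\bb F_\infty)$ has the lifting property --- on which WEP and QWEP coincide --- we recover $(1)$.

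For the equivalence with $(3)$ I would route everything through the following claim: for a separable II$_1$ factor $N$,
\[ N \text{ has QWEP} \quad\Longleftrightarrow\quad N \hookrightarrow \R^{\u}. \]
Granting this, $(2)\Rightarrow(3)$ is immediate: if every \cstar-algebra has QWEP then in particular every II$_1$ factor does, hence embeds into $\R^{\u}$; since every tracial von Neumann algebra embeds into some II$_1$ factor, this is exactly the CEP. For $(3)\Rightarrow(2)$ one runs the reduction backwards --- the CEP embeds each II$_1$ factor into $\R^{\u}$, the easy direction of the claim promotes this to QWEP of every II$_1$ factor, and one then bootstraps to all \cstar-algebras using the robustness of QWEP. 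Here one uses that QWEP is stable under quotients, ultraproducts, passage to and from biduals, tensoring with an injective von Neumann algebra such as $B(\ell^2)$, and crossed products by amenable groups; combined with Takesaki duality for the modular flow --- $M \subseteq (M\rtimes_{\sigma^{\varphi}}\bb R)\rtimes\widehat{\bb R} \cong M\,\bar\otimes\,B(L^2(\bb R))$, with $M\rtimes_{\sigma^{\varphi}}\bb R$ semifinite --- this reduces QWEP for an arbitrary von Neumann algebra to the finite case, and thence, since a finite von Neumann algebra embeds with expectation into a II$_1$ factor (e.g.\ a free product with $\R$), to the II$_1$-factor case.

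It remains to prove the displayed equivalence. The implication from right to left is the easy one: $\R$ is hyperfinite, hence injective, hence has the WEP and in particular QWEP; QWEP is preserved by ultrapowers, so $\R^{\u}$ has QWEP; and a trace-preserving embedding $N\hookrightarrow\R^{\u}$ is automatically with expectation, so QWEP descends to $N$. The implication from left to right is the main obstacle, and is where the genuinely hard work of \cite{K} lies: from the QWEP hypothesis one must extract an approximate matricial model for $N$ --- roughly, factorizations of $\id_N$ through matrix algebras by unital completely positive maps that are suitably compatible with the trace --- and then convert these into the matricial microstates that witness $N\hookrightarrow\R^{\u}$. That step, rather than any of the bookkeeping above, is the heart of the theorem.
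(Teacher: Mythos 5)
Your proposal is correct and follows the same route the paper intends: the paper does not prove this theorem itself but attributes it to Kirchberg \cite{K}, explicitly flagging Theorem~\ref{Kirchberg} (QWEP $\Leftrightarrow$ tracial embeddability into $\R^\u$) as the key ingredient, and your sketch correctly reconstructs Kirchberg's argument around exactly that pivot, with the $(1)\Leftrightarrow(2)$ equivalence being the same lifting-property observation the paper makes in the prose preceding the theorem. The permanence-property bootstrap for $(3)\Rightarrow(2)$ (biduals, Takesaki duality, injective tensor factors, amenable crossed products, reduction to finite and then II$_1$) is indeed how Kirchberg carries out the reduction, and you correctly identify the hard direction of Theorem~\ref{Kirchberg} as the genuine content.
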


A key ingredient in the proof of the previous theorem is the following result of Kirchberg \cite{K}:

\begin{thm}\label{Kirchberg}
A tracial von Neumann algebra has the QWEP if and only if it (tracially) embeds in $\R^\u$.
\end{thm}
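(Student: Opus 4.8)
The plan is to prove the two implications separately. The direction from $\R^\u$-embeddability to the QWEP is a short argument from permanence properties of the QWEP, while the converse contains essentially all the content of the theorem.

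For the easy direction, I would use that $\R$, being hyperfinite, is nuclear and hence has the WEP, so in particular is QWEP; since the class of QWEP \cstar-algebras is closed under ultraproducts, $\R^\u$ is QWEP. Given a trace-preserving embedding $(M,\tau)\hookrightarrow\R^\u$, this embedding is automatically with expectation (as recorded in Section~4), so $M$ is the range of a normal conditional expectation from $\R^\u$; since the QWEP passes to ranges of normal conditional expectations, $M$ is QWEP.

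For the hard direction, suppose $(M,\tau)$ is QWEP. The first step is to reduce to a statement about tracial states on \cstar-algebras with the WEP. Write $M=A/J$ with $A$ a \cstar-algebra with the WEP and $\pi\colon A\twoheadrightarrow M$ the quotient map, and put $\sigma:=\tau\circ\pi$, a tracial state on $A$. Since $\pi(ja)=0$ for $j\in J$, we have $\sigma\bigl((ja)^*(ja)\bigr)=\tau(0)=0$, so the ideal $J$ lies in the kernel of the GNS representation $\pi_\sigma$ and $\pi_\sigma$ factors through $M$; because $\tau$ is a faithful normal trace on the von Neumann algebra $M$, the resulting representation of $M$ is just its standard representation, so the GNS closure of $(A,\sigma)$ is canonically $(M,\tau)$. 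Hence it is enough to show that the GNS closure of a \cstar-algebra with the WEP, with respect to any tracial state, embeds trace-preservingly into $\R^\u$. The bridge is the notion of an \emph{amenable trace}: $\sigma$ is amenable if there are unital completely positive maps $\phi_i\colon A\to M_{k(i)}(\bb C)$ that are asymptotically multiplicative with respect to $\|\cdot\|_2$ and asymptotically trace-preserving, i.e.\ $\tr_{k(i)}\circ\phi_i\to\sigma$ pointwise. Granting amenability, I would finish by assembling the $\phi_i$ along $\u$: each $M_{k(i)}(\bb C)$ embeds trace-preservingly into $\R$, so $\prod_{\u}M_{k(i)}(\bb C)$ embeds trace-preservingly into $\R^\u$, and $a\mapsto(\phi_i(a))_i$ defines a trace-preserving $*$-homomorphism $A\to\prod_{\u}M_{k(i)}(\bb C)\subseteq\R^\u$ which extends, by $\|\cdot\|_2$-density and the $\|\cdot\|_2$-completeness of $\R^\u$, to the GNS closure of $(A,\sigma)$.

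The hard part, and really the crux of the theorem, is the remaining claim that \emph{every tracial state on a \cstar-algebra with the WEP is amenable}. Here I would invoke Lance's characterization of the WEP: for a faithful representation $A\subseteq B(H)$ there is a unital completely positive map $\Psi\colon B(H)\to A^{**}$ restricting to the identity on $A$. Extending $\sigma$ to a normal trace on $A^{**}$ and precomposing with $\Psi$, then combining with the tensorial rigidity of the WEP and a finite-dimensional approximation of the identity operator inside $B(H)$, should produce the required matricial completely positive approximations of $\sigma$; phrased dually, one shows the GNS closure of $(A,\sigma)$ carries a hypertrace. The one point requiring care is that no step of this argument may use any statement equivalent to the CEP, since Theorem~\ref{Kirchberg} is itself an ingredient in Kirchberg's reformulation of the CEP; the role of the WEP hypothesis is precisely to supply the completely positive approximations unconditionally.
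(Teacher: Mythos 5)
The paper does not prove this theorem; it states it as Kirchberg's result and cites~\cite{K}, so there is no in-paper argument to compare against. Assessing your proposal on its own terms: the overall plan is the standard one and the scaffolding is sound. The easy direction via permanence properties of QWEP works, and the reduction of the hard direction to amenability of traces on WEP \cstar-algebras, followed by the ultrafilter assembly of matricial approximations, is how the theorem is usually proved (for instance in Brown and Ozawa's book on \cstar-algebras and finite-dimensional approximations, Chapter~6).

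The genuine gap is where you flag it, but it is larger than you let on, because you conflate two formulations of amenability that are nontrivially equivalent. From Lance's weak expectation $\Psi\colon B(H)\to A^{**}$ it is essentially formal to obtain an $A$-central state on $B(H)$ extending $\tau$ (a hypertrace): compress $\Psi$ to the summand of $A^{**}$ corresponding to the GNS representation and compose with the normal trace there; $A$-centrality follows from the multiplicative domain of $\Psi$. But the amenability your ultrafilter argument actually uses --- u.c.p.\ maps into matrix algebras, asymptotically multiplicative in $\|\cdot\|_2$ and asymptotically trace-preserving --- is an a priori stronger condition, and passing from the hypertrace to these finite-dimensional approximations is itself a substantial theorem (a Connes-type averaging argument combined with a Powers--St{\o}rmer estimate). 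The clause about ``a finite-dimensional approximation of the identity operator inside $B(H)$'' does not supply that argument. So the structure is right and you have correctly identified where the content lies, but the sentence meant to deliver it does not.
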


\begin{remark}
The main result of Ando, Haagerup, and Winslow discussed in the previous section is a generalization of the Theorem \ref{Kirchberg}:  a von Neumann algebra has QWEP if and only if it embeds into $\R_\infty^\u$ (equiv. $\R_\lambda^\u$) with expectation.
\end{remark}

\begin{remark}
It follows from Theorem \ref{Kirchberg} that for a tracial \cstar-algebra $(A,\tau_A)$, we have that $A$ has the QWEP if and only if the GNS closure of $A$ tracially embeds in $\R^\u$, thus clarifying the condition from the previous section.
\end{remark}

Although the class of \cstar-algebras with the WEP is not elementary (it is not closed under ultraproducts \cite[Corollary 4.14]{GoldSinc}), we observed in \cite{GoldQWEP} that the class of \cstar-algebras with the QWEP is elementary.  This result was obtained by showing that the class of \cstar-algebras with the QWEP is closed under ultraproducts and ultraroots.  Moreover, since the class of \cstar-algebras with the QWEP is closed under direct limits, the class must be axiomatized by $\forall\exists$-axioms.  However, ``concrete'' axioms for the class were not offered in \cite{GoldQWEP} and it is reasonable to wonder if there is an effective set of axioms for this class.  By the above theorem, if the CEP were true, then the c.e. set of axioms for \cstar-algebras would (trivially) axiomatize the class.  Once again, we can give a G\"odelian refutation, as shown in \cite{QWEPundec}:

\begin{thm}\label{AGH}
There is no c.e. set $T$ of sentences in the language of \cstar-algebras with the following two properties:
\begin{enumerate}
\item All models of $T$ have QWEP.
\item There is an infinite-dimensional, monotracial model $A$ of $T$ whose unique trace is faithful.
\end{enumerate}
In particular, there is no effective theory $T$ in the language of \cstar-algebras that axiomatizes the QWEP \cstar-algebras.
\end{thm}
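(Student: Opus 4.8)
The plan is to reduce to the already-established fact that the $\R$EP has a negative solution (Theorem~\ref{main} together with its G\"odelian corollary), using the machinery of Subsection~\ref{GNSsection}. Suppose, for contradiction, that $T$ is a c.e.\ set of sentences in the language of \cstar-algebras satisfying (1) and (2): all models of $T$ have QWEP, and there is an infinite-dimensional, monotracial model $A$ of $T$ whose unique trace $\tau_A$ is faithful. I would like to run the strategy that produced the earlier \cstar-algebraic incompleteness results, namely: show that the universal theory of $\R$ becomes right c.e.\ (hence, with left c.e.-ness, computable), contradicting Theorem~\ref{main}. The extra input over the previous subsection is precisely Theorem~\ref{Kirchberg} (and the remark following it): having QWEP for a tracial \cstar-algebra is equivalent to its GNS closure embedding tracially into $\R^\u$.

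First I would fix $A$ as in the statement and pass to its GNS closure $N := (N,\tau_N)$ with respect to $\tau_A$. Since $\tau_A$ is faithful, $A$ embeds (as a $\|\cdot\|_2$-dense subalgebra) into $N$; since $A$ is monotracial, $N$ has a unique trace and hence is a factor; since $A$ is infinite-dimensional, so is $N$; therefore $N$ is a II$_1$ factor, so $\R$ embeds into $N$. Consequently, for every universal sentence $\sigma$ in the language of tracial von Neumann algebras, $\sigma^{(A,\tau_A)} = \sigma^{(N,\tau_N)} \geq \sigma^{(\R,\tau_\R)}$. By property (1), $A$ has QWEP, so by the remark following Theorem~\ref{Kirchberg}, $N$ embeds tracially into $\R^\u$, which gives the reverse inequality $\sigma^{(N,\tau_N)} \leq \sigma^{(\R,\tau_\R)}$. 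Hence $\sigma^{(A,\tau_A)} = \sigma^{(\R,\tau_\R)}$ for every universal $\sigma$ (reading $\sigma$ on the left as a sentence in the language of tracial \cstar-algebras).

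Next I would upgrade this to computability of $\Th_\forall(\R)$. Work in the language $L$ of tracial \cstar-algebras expanded by the new predicate declaring a tracial state, and let $T'$ be $T$ together with the (c.e., in fact finite) axioms for being a \cstar-algebra equipped with a tracial state; $T'$ is c.e.\ and $(A,\tau_A) \models T'$. I claim $\sigma^{(\R,\tau_\R)} = \sup\{\sigma^{(B,\tau_B)} : (B,\tau_B) \models T'\}$ for every universal $\sigma$. The inequality $\geq$ is immediate since $(A,\tau_A)\models T'$ witnesses the left side (using the previous paragraph). For $\leq$: given $(B,\tau_B)\models T'$, property (1) says $B$ has QWEP, so by the remark after Theorem~\ref{Kirchberg} the GNS closure of $(B,\tau_B)$ embeds tracially into $\R^\u$; since $\sigma$ is universal and $\sigma^{(B,\tau_B)} \leq \sigma^{(\text{GNS closure of }B)} \leq \sigma^{(\R^\u,\tau_\R^\u)} = \sigma^{(\R,\tau_\R)}$ (the last equality by \L o\'s's theorem for ultrapowers), we are done. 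Now the Completeness Theorem~\ref{completeness} gives $\sigma^{(\R,\tau_\R)} = \inf\{r \in \bb Q^{>0} : T' \vdash \sigma \dotminus r\}$, and since $T'$ is c.e.\ the set of derivable sentences is c.e., so this infimum exhibits $\sigma^{(\R,\tau_\R)}$ as right c.e., uniformly in $\sigma$. Combined with left c.e.-ness (computable presentability of $\R$), $\Th_\forall(\R)$ is computable, contradicting Theorem~\ref{main}. The ``in particular'' clause follows: an effective axiomatization of the QWEP \cstar-algebras would be such a $T$, since the standard monotracial infinite-dimensional examples (e.g.\ $\mathcal Q$, or $\R$ itself viewed as a \cstar-algebra) have QWEP.

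The main obstacle — and the reason this is cleaner than the earlier GNS-closure argument — is bookkeeping about which embeddings are trace-preserving. In Subsection~\ref{GNSsection} one needed monotraciality of \emph{all} models of the theory (hence the detour through the uniform Dixmier property) to force the embedding $B \hookrightarrow A^\u$ to preserve traces. Here that issue evaporates: Theorem~\ref{Kirchberg} supplies, for \emph{each} model $B$ of $T$ separately, a trace-preserving embedding of its GNS closure directly into $\R^\u$, so no comparison across models of $T$ is needed and no Dixmier-property hypothesis is required. The one point demanding a little care is that $\sigma$ is a sentence in the language of tracial \emph{von Neumann} algebras being evaluated on tracial \cstar-algebras and their GNS closures; this is handled exactly as in Subsection~\ref{GNSsection} by noting that the relevant \cstar-algebra is $\|\cdot\|_2$-dense in its GNS closure, so universal sentences take the same value.
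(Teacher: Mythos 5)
Your proof is correct and matches the paper's argument, which is given there only in sketch form: form $T'$ by adding to $T$ the (c.e.) axioms for tracial \cstar-algebras, use that the GNS closure of $(A,\tau_A)$ is a II$_1$ factor containing $\R$ (so $\sigma^{(A,\tau_A)}\geq\sigma^{(\R,\tau_\R)}$), and use Kirchberg's characterization of QWEP to embed the GNS closure of each model $(B,\tau_B)\models T'$ tracially into $\R^\u$ (so $\sigma^{(B,\tau_B)}\leq\sigma^{(\R,\tau_\R)}$), then invoke the Completeness Theorem to get right-c.e.\ uniformly in $\sigma$ and contradict Theorem~\ref{main}. Your closing observation — that Theorem~\ref{Kirchberg} handles each model $B$ individually, so unlike Subsection~\ref{GNSsection} no Dixmier-type hypothesis is needed to make the embedding trace-preserving — is implicit in the paper but is a useful point to make explicit.
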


The proof of the previous theorem is very similar to the proof from Subection \ref{GNSsection} above.  Indeed, suppose, towards a contradiction, that such a set $T$ existed.  Take an infinite-dimensional, monotracial model $A$ of $T$ whose unique trace $\tau_A$ is faithful.
Work now in the language of tracial \cstar-algebras and consider the theory $T'$ consisting of the axioms for tracial \cstar-algebras together with $T$.  Note that $T'$ is effective and $(A,\tau_{A})\models T'$.  
Note that, for any universal sentence $\sigma$ in the language of tracial von Neumann algebras, we have
$$\sup\{\sigma^{(B,\tau_B)} \ : \ (B,\tau_B)\models T'\}=\sigma^{(\mathcal{R},\tau_{\mathcal{R}})}.$$  The proof of this statement follows the same line of reasoning as earlier, namely that the GNS closure of $(A,\tau_A)$ is a II$_1$ factor and all models of $T$, being QWEP, have GNS closures that embed into $\R^\u$.  By running proofs from $T'$, we can find computable upper bounds to $\sigma^{(\mathcal{R},\tau_{\mathcal{R}})}$, obtaining a contradiction as before.

\subsection{Tsirelson pairs of \cstar-algebras}

The following theorem, due independently to Fritz \cite{fritz} and Junge et. al. \cite{junge} is the main observation connecting the CEP and Tsirelson's problem.  Here, $\mathbb F(k,n)$ is the group freely generated by $k$ elements of order $n$.

\begin{thm}\label{fritz}
Given  $p\in [0,1]^{k^2n^2}$, we have that $p\in C_{qa}(k,n)$ (resp. $p\in C_{qc}(k,n)$) if and only if, for each $x\in [k]$ there are POVMs $A^x$ and $B^y$ of length $n$ in \cstar$(\mathbb{F}(k,n))$ and a state $\varphi$ on \cstar$(\mathbb{F}(k,n))\otimes_{\min}$ \cstar$(\mathbb{F}(k,n))$ (resp. on \cstar$(\mathbb{F}(k,n))\otimes_{\max}$\cstar$(\mathbb{F}(k,n))$ such that $$p(a,b|x,y)=\varphi(A^x_a\otimes B^y_b).$$
\end{thm}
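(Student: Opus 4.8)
The plan is to route everything through the universal property of $C^*(\mathbb{F}(k,n))$ together with the elementary correspondence between length-$n$ measurements and unitaries of order $n$. First I would record two reductions. (i) In each of the models defining $C_q(k,n)$, $C_{qa}(k,n)$, and $C_{qc}(k,n)$ one may assume the measurement operators are PVMs rather than general POVMs: for the finite-dimensional and tensor models this is Naimark's dilation theorem applied to each POVM (the state vector is kept inside the original tensored space, so the correlation is unchanged), and in the commuting model one dilates Alice's and Bob's POVMs in a way that preserves their mutual commutation, which is standard. (ii) A length-$n$ PVM $(e_1,\dots,e_n)$ in a unital C*-algebra corresponds bijectively to a unitary $u=\sum_a \zeta^a e_a$ of order $n$, where $\zeta=e^{2\pi i/n}$ and $e_a=\tfrac{1}{n}\sum_j\zeta^{-aj}u^j$. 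Hence $C^*(\mathbb{F}(k,n))$ is precisely the universal unital C*-algebra generated by $k$ length-$n$ PVMs $e^1,\dots,e^k$: any $k$-tuple of length-$n$ PVMs in a unital C*-algebra $D$ arises as the image of $(e^1,\dots,e^k)$ under a unique unital $*$-homomorphism $C^*(\mathbb{F}(k,n))\to D$.

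The $C_{qc}$ (maximal tensor) case is then routine. For the backward implication, given POVMs $A^x,B^y$ in $C^*(\mathbb{F}(k,n))$ and a state $\varphi$ on $C^*(\mathbb{F}(k,n))\otimes_{\max}C^*(\mathbb{F}(k,n))$, I would pass to the GNS triple $(H,\Pi,\xi)$; then $\pi_A:=\Pi(\,\cdot\,\otimes 1)$ and $\pi_B:=\Pi(1\otimes\,\cdot\,)$ are commuting unital $*$-homomorphisms into $B(H)$, so $(\pi_A(A^x_a))_a$ and $(\pi_B(B^y_b))_b$ are commuting POVMs on $H$ with $\langle\pi_A(A^x_a)\pi_B(B^y_b)\xi,\xi\rangle=\varphi(A^x_a\otimes B^y_b)=p(a,b|x,y)$, whence $p\in C_{qc}(k,n)$. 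For the forward implication, use the PVM reduction to write $p(a,b|x,y)=\langle\tilde A^x_a\tilde B^y_b\,\hat\xi,\hat\xi\rangle$ for commuting PVMs $\tilde A^x,\tilde B^y$ on a Hilbert space $\hat H$ and a unit vector $\hat\xi$; the universal property gives commuting $*$-homomorphisms $\pi_A,\pi_B\colon C^*(\mathbb{F}(k,n))\to B(\hat H)$ with $\pi_A(e^x_a)=\tilde A^x_a$ and $\pi_B(e^y_b)=\tilde B^y_b$, and the universal property of $\otimes_{\max}$ assembles them into $\Pi\colon C^*(\mathbb{F}(k,n))\otimes_{\max}C^*(\mathbb{F}(k,n))\to B(\hat H)$; then $\varphi:=\langle\Pi(\,\cdot\,)\hat\xi,\hat\xi\rangle$ is a state with $\varphi(e^x_a\otimes e^y_b)=p(a,b|x,y)$, so $p$ lies in the right-hand side, realized by the canonical PVMs $e^x_a$.

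For the $C_{qa}$ (minimal tensor) case the forward direction is parallel: write $p=\lim_m p^{(m)}$ with $p^{(m)}\in C_q(k,n)$ realized by finite-dimensional PVMs on $H^{(m)}_A,H^{(m)}_B$ and a unit vector $\xi_m$; the universal property produces $*$-homomorphisms $\pi^{(m)}_A,\pi^{(m)}_B$ from $C^*(\mathbb{F}(k,n))$ into matrix algebras, and since the tensor product of $*$-homomorphisms is automatically continuous for the minimal norm (and $B(H^{(m)}_A)\otimes_{\min}B(H^{(m)}_B)=B(H^{(m)}_A\otimes H^{(m)}_B)$ in finite dimensions), one gets states $\varphi_m$ on $C^*(\mathbb{F}(k,n))\otimes_{\min}C^*(\mathbb{F}(k,n))$ with $\varphi_m(e^x_a\otimes e^y_b)=p^{(m)}(a,b|x,y)$; any weak-$*$ cluster point $\varphi$ of $(\varphi_m)$ in the weak-$*$ compact state space satisfies $\varphi(e^x_a\otimes e^y_b)=p(a,b|x,y)$. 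The hard part will be the backward direction of this minimal case, which is where the identity $C_{qa}(k,n)=\overline{C_q(k,n)}$ is used essentially, and which I expect to be the main obstacle.

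For that step: given a state $\varphi$ on $C^*(\mathbb{F}(k,n))\otimes_{\min}C^*(\mathbb{F}(k,n))$ and POVMs $A^x,B^y$ in $C^*(\mathbb{F}(k,n))$, I would fix a faithful representation $C^*(\mathbb{F}(k,n))\subseteq B(K)$ on a separable Hilbert space $K$; by the very definition of the minimal norm, $C^*(\mathbb{F}(k,n))\otimes_{\min}C^*(\mathbb{F}(k,n))$ sits isometrically inside $B(K\otimes K)$, the two factors acting as $\,\cdot\,\otimes 1$ and $1\otimes\,\cdot\,$. Then extend $\varphi$ to a state on $B(K\otimes K)$ and approximate it weak-$*$ by normal states, each of the form $\sum_j\lambda_j\omega_{\eta_j}$ with $\eta_j\in K\otimes K$ unit vectors, $\lambda_j\ge 0$, $\sum_j\lambda_j=1$. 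For each $j$, the numbers $\langle(A^x_a\otimes B^y_b)\eta_j,\eta_j\rangle$ are exactly the entries of a quantum correlation built from the Hilbert spaces $H_A=H_B=K$, the POVMs $A^x,B^y$, and the state $\eta_j$; truncating $K$ to finite-dimensional subspaces and compressing the POVMs shows this correlation lies in $\overline{C_q(k,n)}=C_{qa}(k,n)$. Since $C_q(k,n)$ — hence $C_{qa}(k,n)$ — is convex and $C_{qa}(k,n)$ is closed, the correlation attached to $\sum_j\lambda_j\omega_{\eta_j}$ again lies in $C_{qa}(k,n)$, and passing to the weak-$*$ limit yields $p\in C_{qa}(k,n)$. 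The only points needing real care are the commuting-model PVM reduction and the weak-$*$ density of normal states among all states of $B(K\otimes K)$, both of which are standard; and it is precisely this last density fact — which turns a state on the \emph{minimal} tensor product into a limit of convex combinations of honest (if infinite-dimensional) quantum strategies — that pins the answer down to $C_{qa}$ rather than merely $C_{qc}$.
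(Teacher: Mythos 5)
The paper itself gives no proof of this theorem: it is stated as a citation to Fritz and to Junge et al., so there is no ``paper proof'' to compare against. What follows is therefore an evaluation of your argument on its own terms.

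Your overall strategy is sound, and the bulk of it is correct: the Fourier-type dictionary between length-$n$ PVMs and order-$n$ unitaries realizing $C^*(\mathbb{F}(k,n))$ as the universal algebra on $k$ length-$n$ PVMs, the GNS and $\otimes_{\max}$ universal-property argument for $C_{qc}$, the weak-$*$ compactness argument for the forward $C_{qa}$ direction, and the backward $C_{qa}$ argument via a faithful representation, Hahn--Banach extension to $B(K\otimes K)$, weak-$*$ density of normal states, and convexity plus closedness of $C_{qa}(k,n)$. That last paragraph in particular is a clean way of seeing how ``minimal'' pins you to $C_{qa}$.

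The genuine soft spot is reduction (i), specifically your claim that in the finite-dimensional tensor model the POVM-to-PVM reduction is ``Naimark's dilation theorem applied to each POVM (the state vector is kept inside the original tensored space, so the correlation is unchanged).'' This does not work as stated. Applying Naimark to Alice's POVM $A^1$ replaces $H_A$ by $H_A\otimes\mathbb{C}^n$ and the vector $\xi$ by $(V_1\otimes I_{H_B})\xi$ with $V_1\eta=\sum_a (A^1_a)^{1/2}\eta\otimes e_a$; the correlations involving $A^1$ are preserved, but for $x\neq 1$ you get $V_1^*(A^x_a\otimes I)V_1=\sum_i (A^1_i)^{1/2}A^x_a(A^1_i)^{1/2}$, which is not $A^x_a$ unless the families $A^1$ and $A^x$ commute. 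So dilating one POVM spoils the others on the same side, and iterating does not fix it. The correct statement (that PVM strategies are dense among POVM ones in the $C_q$ model, which is all you need for $C_{qa}$) is true but requires a real argument; similarly, for the commuting model your dilation ``preserving mutual commutation'' is correct but not as automatic as the word ``standard'' suggests --- it is Stinespring plus Arveson's commutant lifting theorem, applied once for Alice and once for Bob.

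There is a cleaner route that sidesteps the PVM reduction entirely and is closer to what Fritz and Junge et al. actually do. For each $x$, the POVM $(A^x_a)_a$ defines a unital positive (hence completely positive, since the domain is commutative) map $\mathbb{C}^n\to B(H)$; Boca's free product theorem for u.c.p.\ maps then extends these to a single u.c.p.\ map $\phi_A\colon C^*(\mathbb{F}(k,n))=C^*(\mathbb{Z}/n\mathbb{Z})^{*k}\to B(H)$ with $\phi_A(e^x_a)=A^x_a$, and likewise $\phi_B$. Since the tensor product of u.c.p.\ maps is $\min$-continuous, and the product of two u.c.p.\ maps with commuting ranges is u.c.p.\ for $\max$, composing with the vector state $\omega_\xi$ gives the desired state directly, realized by the canonical PVMs $e^x_a$. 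This avoids any dilation and handles both tensor and commuting cases uniformly. I would recommend either replacing reduction (i) with the Boca/u.c.p.\ argument, or at least replacing the appeal to ``Naimark applied to each POVM'' with a correct justification for the PVM-density claim.
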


Note that if $(C^*(\mathbb{F}(k,n)),C^*(\mathbb{F}(k,n)))$ is a nuclear pair, then Tsirelson's problem would have a positive solution.  It turns out that after a small ``lifting'' argument, $(C^*(\mathbb{F}(k,n)),C^*(\mathbb{F}(k,n)))$ being a nuclear pair would follow from $(C^*(\mathbb{F}_\infty),C^*(\mathbb{F}_\infty))$ being a nuclear pair.  Combined with Theorem \ref{Kirchberg}, we have arrived at:

\begin{cor}
A positive solution to the CEP implies a positive solution to Tsirelson's problem.
\end{cor}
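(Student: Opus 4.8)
The plan is to reduce the statement, via Theorem~\ref{fritz}, to a purely \cstar-algebraic assertion about tensor products, and then to propagate that assertion from $C^*(\mathbb F_\infty)$ down to the finitely generated torsion groups $\mathbb F(k,n)$. First I would observe that if, for fixed $k,n\geq 2$, the pair $(C^*(\mathbb F(k,n)),C^*(\mathbb F(k,n)))$ is a nuclear pair, then $C_{qa}(k,n)=C_{qc}(k,n)$: in that case $C^*(\mathbb F(k,n))\otimes_{\min}C^*(\mathbb F(k,n))$ and $C^*(\mathbb F(k,n))\otimes_{\max}C^*(\mathbb F(k,n))$ are the same \cstar-algebra, hence have the same state space, so the two descriptions of the correlation sets given by Theorem~\ref{fritz} --- which use the \emph{same} POVM data $A^x$, $B^y$ inside $C^*(\mathbb F(k,n))$ and differ only in which tensor norm the state lives on --- collapse into one. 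Thus it suffices to show that $(C^*(\mathbb F(k,n)),C^*(\mathbb F(k,n)))$ is a nuclear pair for all $k,n\geq 2$. The base case is $\mathbb F_\infty$: by Kirchberg's theorem $A$ has the WEP if and only if $(A,C^*(\mathbb F_\infty))$ is a nuclear pair, so $(C^*(\mathbb F_\infty),C^*(\mathbb F_\infty))$ is a nuclear pair precisely when $C^*(\mathbb F_\infty)$ has the WEP, which --- by the three-way equivalence above, together with Theorem~\ref{Kirchberg} --- is equivalent to a positive solution of the CEP. So, assuming CEP, $(C^*(\mathbb F_\infty),C^*(\mathbb F_\infty))$ is a nuclear pair.

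The heart of the argument is the ``small lifting argument'' that transports nuclearity of the self-pair from $C^*(\mathbb F_\infty)$ to $C^*(\mathbb F(k,n))$. Killing all but $k$ generators of $\mathbb F_\infty$ and imposing the relations making those have order $n$ yields a unital surjective $*$-homomorphism $\rho\colon C^*(\mathbb F_\infty)\twoheadrightarrow C^*(\mathbb F(k,n))$. The key point I would establish is that $\rho$ admits a unital completely positive section $s\colon C^*(\mathbb F(k,n))\to C^*(\mathbb F_\infty)$ with $\rho\circ s=\mathrm{id}$; this is where one uses that $C^*(\mathbb F(k,n))$ is the free product $\ast_{i=1}^{k}C(\mathbb Z/n\mathbb Z)$ of finite-dimensional (hence nuclear) \cstar-algebras, so that it enjoys the lifting property and the above surjection therefore splits completely positively. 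Granting such an $s$, I would argue as follows. The map $s\otimes s$ extends to a unital completely positive --- hence contractive --- map $C^*(\mathbb F(k,n))\otimes_{\min}C^*(\mathbb F(k,n))\to C^*(\mathbb F_\infty)\otimes_{\min}C^*(\mathbb F_\infty)=C^*(\mathbb F_\infty)\otimes_{\max}C^*(\mathbb F_\infty)$, the equality coming from the previous paragraph, while $\rho\otimes\rho$ extends to a $*$-homomorphism $C^*(\mathbb F_\infty)\otimes_{\max}C^*(\mathbb F_\infty)\to C^*(\mathbb F(k,n))\otimes_{\max}C^*(\mathbb F(k,n))$. Their composite is a contraction on $C^*(\mathbb F(k,n))\otimes_{\min}C^*(\mathbb F(k,n))$ which, because $\rho\circ s=\mathrm{id}$, restricts to the identity on the dense algebraic tensor product; hence $\|\cdot\|_{\max}\leq\|\cdot\|_{\min}$ there, and as the reverse inequality is automatic the two norms agree, i.e.\ $(C^*(\mathbb F(k,n)),C^*(\mathbb F(k,n)))$ is a nuclear pair. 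Together with the first paragraph, this gives $C_{qa}(k,n)=C_{qc}(k,n)$ for every $k,n\geq 2$, which is a positive solution to Tsirelson's problem.

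I expect the one genuine obstacle to be the lifting step --- producing the completely positive section $s$, equivalently verifying that $C^*(\mathbb F(k,n))$ has the lifting property. One cannot simply invoke the lifting property of $C^*(\mathbb F_\infty)$ or of $C^*(\mathbb F_k)$, since the lifting property is not inherited by arbitrary quotients; the presentation of $C^*(\mathbb F(k,n))$ as a free product of finite-dimensional \cstar-algebras, together with the permanence of the lifting property under free products, is what is needed. A streamlined variant avoids the explicit section: granting CEP, every \cstar-algebra --- in particular $C^*(\mathbb F(k,n))$ --- has the QWEP, and since $C^*(\mathbb F(k,n))$ has the lifting property, the quoted equivalence of QWEP and WEP for such algebras gives that $C^*(\mathbb F(k,n))$ has the WEP; and an algebra with the WEP forms a nuclear pair with any algebra having the lifting property (Kirchberg), hence in particular with $C^*(\mathbb F(k,n))$ itself. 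Either way, the lifting property of $C^*(\mathbb F(k,n))$ is the indispensable input.
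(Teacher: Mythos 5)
Your proposal is correct and follows the same route as the paper: reduce via Theorem~\ref{fritz} to showing $(C^*(\mathbb F(k,n)),C^*(\mathbb F(k,n)))$ is a nuclear pair, deduce that $(C^*(\mathbb F_\infty),C^*(\mathbb F_\infty))$ is a nuclear pair from CEP via Kirchberg's theorems, and transfer this to $C^*(\mathbb F(k,n))$ by a lifting argument. The paper compresses all of this into two sentences, referring only to ``a small lifting argument''; your writeup fleshes out exactly that step, and both of your variants (the explicit u.c.p.\ section combined with the $\min$/$\max$ norm comparison, and the QWEP-plus-LP-implies-WEP shortcut) are valid ways to carry it out.
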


Motivated by Theorem \ref{fritz}, with Hart \cite{tsirelson} we considered the following definitions:

\begin{defn}
Let $C_{\min}(C,D,k,n)$ (respectively $C_{\max}(C,D,k,n))$ denote the closure of the set of correlations of the form $\varphi(A^x_a\otimes B^y_b)$, where $A^1,\ldots,A^k$ are POVMs of length $n$ from $C$, $B^1,\ldots,B^k$ are POVMs of length $n$ from $D$, and $\varphi$ is a state on $C\otimes_{\min} D$ (respectively a state on $C\otimes_{\max} D$).
\end{defn}

The remainder of this section describes the main results of \cite{tsirelson}.  Throughout, $(C,D)$ denotes a pair of \cstar-algebras.

We begin by noting the following facts about the previous definition:

\begin{lem}
    
\

\begin{enumerate}
\item $C_{\min}(C,D,k,n)\subseteq C_{\max}(C,D,k,n)$.
    \item $C_{\min}(C,D,k,n)\subseteq C_{qa}(k,n)$.
    \item $C_{\max}(k,n)\subseteq C_{qc}(k,n)$.
\end{enumerate}
\end{lem}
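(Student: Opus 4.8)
The plan is to verify each inclusion by unwinding the definitions of the correlation sets involved and using the relationships between the minimal/maximal tensor products and the quantum/quantum-commuting correlation sets already recorded in the excerpt.

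For item (1), I would observe that the identity map on the algebraic tensor product $C \odot D$ extends to a $*$-homomorphism $C\otimes_{\max}D \to C\otimes_{\min}D$ (this is exactly the statement that $\|\cdot\|_{\min}\leq\|\cdot\|_{\max}$, recorded above). Composing a state on $C\otimes_{\min}D$ with this quotient map yields a state on $C\otimes_{\max}D$ agreeing with it on elementary tensors $A^x_a\otimes B^y_b$. Hence every correlation of the form $\varphi(A^x_a\otimes B^y_b)$ with $\varphi$ a state on $C\otimes_{\min}D$ is also realized with a state on $C\otimes_{\max}D$, so the generating sets satisfy the inclusion, and taking closures preserves it. This step is essentially immediate.

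For items (2) and (3), I would first pass through the GNS representation: given a state $\varphi$ on $C\otimes_{\min}D$ (resp. $C\otimes_{\max}D$), the GNS construction produces a Hilbert space $H$, a unit vector $\xi$, and a representation $\pi$ of the tensor product on $H$. Restricting $\pi$ to $C\otimes 1$ and $1\otimes D$ gives commuting families of operators, and the images $\pi(A^x_a\otimes 1)$, $\pi(1\otimes B^y_b)$ are POVMs on $H$ with $\pi(A^x_a\otimes 1)$ commuting with $\pi(1\otimes B^y_b)$; thus $\varphi(A^x_a\otimes B^y_b)=\langle \pi(A^x_a\otimes 1)\pi(1\otimes B^y_b)\xi,\xi\rangle$ exhibits the correlation as an element of $C_{qc}(k,n)$. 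Since $C_{qc}(k,n)$ is closed (recorded above), the closure $C_{\max}(C,D,k,n)$ lands inside $C_{qc}(k,n)$, giving (3). For (2), the minimal tensor product is realized concretely on $H_C\otimes H_D$ after faithfully representing $C\subseteq B(H_C)$ and $D\subseteq B(H_D)$; restricting the GNS-type data to this concrete picture, or approximating the state by vector states on finite-dimensional subspaces, exhibits correlations from the generating set of $C_{\min}$ as limits of genuine finite-dimensional quantum correlations, i.e. elements of $C_q(k,n)$, hence of $C_{qa}(k,n)=\overline{C_q(k,n)}$ after taking closure.

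The main obstacle is the finite-dimensionality subtlety in item (2): membership in $C_q(k,n)$ requires finite-dimensional Hilbert spaces, whereas the GNS space of a state on $C\otimes_{\min}D$ is typically infinite-dimensional. The cleanest fix is to approximate: since $C\otimes_{\min}D$ sits inside $B(H_C\otimes H_D)$ and states on the minimal tensor product can be weak-$*$ approximated by convex combinations of vector states (or, more directly, one uses that the tensor product structure lets one compress to finite-dimensional corners while controlling the error on the finitely many relevant matrix entries $A^x_a\otimes B^y_b$), one obtains a net of finite-dimensional quantum correlations converging to the given correlation, placing it in $C_{qa}(k,n)$. Care is needed to ensure the compressed families remain POVMs, which one handles exactly as in the approximation argument of Lemma~\ref{Rlemma} (renormalizing and invoking the perturbation estimate for POVMs); but since $C_{\min}$ is defined as a closure, it suffices to hit $C_q(k,n)$ densely, so no exact compression is required.
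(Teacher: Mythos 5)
Your argument for each item is correct and is essentially the standard route: item (1) via composing with the canonical surjective $*$-homomorphism $C\otimes_{\max}D\to C\otimes_{\min}D$ (which exists precisely because $\|\cdot\|_{\min}\le\|\cdot\|_{\max}$), item (3) via the GNS representation of the state on $C\otimes_{\max}D$ (where $\pi(A^x_a\otimes 1)$ and $\pi(1\otimes B^y_b)$ are commuting POVMs since $\pi$ is unital), and item (2) via extending the state on $C\otimes_{\min}D\subseteq B(H_C\otimes H_D)$ to $B(H_C\otimes H_D)$, approximating weak-$*$ by convex combinations of vector states, and compressing to finite-dimensional corners $K_C\otimes K_D$. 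The survey itself does not reproduce the proof (it cites \cite{tsirelson}), but this is the argument one would give.

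One simplification worth noting in your treatment of item (2): the ``care needed to ensure the compressed families remain POVMs'' is in fact automatic and requires no perturbation estimate. If $P$ is the orthogonal projection onto a finite-dimensional subspace $K_C\subseteq H_C$, then the compressions $P A^x_a P$ are positive operators on $K_C$ with $\sum_a PA^x_aP = P 1_{H_C} P = 1_{K_C}$, so they form an exact POVM on $K_C$; the analogous statement holds on the Bob side. The only approximation needed is to replace the vector $\xi$ by a nearby unit vector lying in some $K_C\otimes K_D$, and then $\langle (PA^x_aP\otimes QB^y_bQ)\xi',\xi'\rangle = \langle (A^x_a\otimes B^y_b)\xi',\xi'\rangle$ exactly, because $(P\otimes Q)\xi'=\xi'$. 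Convexity of $C_q(k,n)$ then handles the passage from vector states to convex combinations of vector states, so there is no need to invoke the renormalization/perturbation machinery of Lemma~\ref{Rlemma}, which concerns the subtler problem of perturbing approximate PVMs to exact PVMs.
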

\begin{defn}
We say that $(C,D)$ is a \textbf{(strong) Tsirelson pair} if $C_{\min}(C,D,k,n)=C_{\max}(C,D,k,n)$ (=$C_{qa}(k,n)$) for all $(k,n)$.
\end{defn}

Clearly, if $(C,D)$ is a nuclear pair, then $(C,D)$ is a Tsirelson pair.

Tsirelson's problem asks if $(C^*(\mathbb{F}_\infty),C^*(\mathbb{F}_\infty))$ is a Tsirelson pair; we now know that this is not the case.

\begin{lem}
Exactly one of the following happens:
\begin{itemize}
\item $(C,D)$ is not a Tsirelson pair.
\item One of $C$ or $D$ is \emph{subhomogeneous} (whence $(C,D)$ is a nuclear pair), but $(C,D)$ is not a strong Tsirelson pair.
\item $(C,D)$ is a strong Tsirelson pair.
\end{itemize}
\end{lem}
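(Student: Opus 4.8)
The statement is a trichotomy, so the plan is to check two things: that the three alternatives are pairwise incompatible, and that at least one of them always holds. Incompatibility is quick once one records two implications among the conditions involved. First, if one of $C$ or $D$ is subhomogeneous then it is type I, hence nuclear, so $(C,D)$ is a nuclear pair and therefore a Tsirelson pair (the implication nuclear pair $\Rightarrow$ Tsirelson pair being the observation recorded right after the definition); in particular this already justifies the parenthetical remark in the second alternative. Second, a strong Tsirelson pair is by definition a Tsirelson pair. Hence the first alternative is incompatible with each of the other two (both force $(C,D)$ to be a Tsirelson pair), while the second and third are incompatible because of their opposite clauses about strongness. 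So at most one alternative can hold.

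For exhaustiveness I would split into three cases that partition all possibilities: (A) $(C,D)$ is not a Tsirelson pair; (B) one of $C,D$ is subhomogeneous (which, by the above, forces $(C,D)$ to be a Tsirelson pair); (C) $(C,D)$ is a Tsirelson pair but neither $C$ nor $D$ is subhomogeneous. In case (A) the first alternative holds. In case (B), $(C,D)$ is a nuclear pair; then it is either a strong Tsirelson pair, giving the third alternative, or it is not, giving the second. Case (C) is the crux: I claim that there $(C,D)$ is automatically a \emph{strong} Tsirelson pair, so the third alternative holds. Granting this, all cases are covered, and combined with the first paragraph exactly one alternative holds.

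Everything therefore reduces to the claim: \emph{if neither $C$ nor $D$ is subhomogeneous, then $C_{\min}(C,D,k,n)=C_{qa}(k,n)$ for all $k,n\ge 2$} — for then the hypothesis $C_{\min}(C,D,k,n)=C_{\max}(C,D,k,n)$ (being a Tsirelson pair) immediately upgrades to $C_{\max}(C,D,k,n)=C_{qa}(k,n)$, which is strongness. The inclusion $\subseteq$ is part of the preceding lemma, so only $\supseteq$ requires work; and since $C_{qa}(k,n)=\overline{C_q(k,n)}$ while $C_{\min}(C,D,k,n)$ is closed, it suffices to approximate a given $p=p_{\vec A,\vec B,\xi}\in C_q(k,n)$, where $\vec A^x$ is a POVM on a finite-dimensional space $H_A$, $\vec B^y$ a POVM on a finite-dimensional space $H_B$, and $\xi\in H_A\otimes H_B$ a unit vector. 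Here I would use the classical fact that a non-subhomogeneous C*-algebra admits irreducible representations of arbitrarily large dimension: fix an irreducible representation $\sigma_C\colon C\to B(K_C)$ with $\dim K_C\ge\dim H_A$ and an isometric copy $H_A\subseteq K_C$ with projection $q$, and extend each POVM $\vec A^x$ to a POVM on $K_C$ by padding the last entry with the projection onto $H_A^\perp$. Since $\sigma_C(C)$ is strong-operator dense in $B(K_C)$, Kaplansky density yields positive contractions $a^x_i\in C$ whose images $\sigma_C(a^x_i)$ are as close as we like, on the finite-dimensional subspace $H_A$, to the padded $A^x_i$; the tuples $(a^x_1,\dots,a^x_n)$ need not be genuine POVMs, but the standard renormalization $b^x_i:=s_\varepsilon^{-1/2}a^x_i s_\varepsilon^{-1/2}+\frac{\varepsilon}{n}s_\varepsilon^{-1}$, with $s_\varepsilon:=\varepsilon\cdot 1+\sum_i a^x_i$, produces an honest POVM $(b^x_1,\dots,b^x_n)$ in $C$ whose compression $q\sigma_C(b^x_i)q$ to $H_A$ is still arbitrarily close to $A^x_i$ (here one uses that $\sigma_C$ is multiplicative and that $\sigma_C(s_\varepsilon)$ acts approximately as $(1+\varepsilon)\cdot 1$ on $H_A$). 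Doing the same on the $D$-side gives honest POVMs $(c^y_1,\dots,c^y_n)$ in $D$ with $q'\sigma_D(c^y_j)q'$ close to $B^y_j$. Finally, $\sigma_C$ and $\sigma_D$ induce a representation $\sigma_C\otimes\sigma_D$ of $C\otimes_{\min}D$ on $K_C\otimes K_D\supseteq H_A\otimes H_B$, and the vector state $\varphi:=\langle(\sigma_C\otimes\sigma_D)(\cdot)\xi,\xi\rangle$ on $C\otimes_{\min}D$ satisfies $\varphi(b^x_i\otimes c^y_j)=\langle\sigma_C(b^x_i)\otimes\sigma_D(c^y_j)\xi,\xi\rangle$, which — because $\xi$ lies in $H_A\otimes H_B$ — depends only on the compressions of $\sigma_C(b^x_i)$ and $\sigma_D(c^y_j)$ to $H_A$ and $H_B$, hence is as close as we wish to $\langle A^x_i\otimes B^y_j\xi,\xi\rangle=p(i,j\mid x,y)$. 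Thus $p\in C_{\min}(C,D,k,n)$.

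The hard part is exactly this last claim. The conceptual ingredient — that non-subhomogeneity supplies irreducible representations of unbounded dimension, hence room to ``copy'' arbitrary finite-dimensional measurement systems inside both $C$ and $D$ — is classical; the real labor is the $\varepsilon$-bookkeeping that upgrades the approximate POVMs coming out of Kaplansky density to honest ones without moving the correlation, which is the same flavor of perturbation as in Lemma~\ref{PVMdefinability}. The rest (the tensor-product bookkeeping, and the elementary observation that for $\xi\in H_A\otimes H_B$ the quantity $\langle(S\otimes T)\xi,\xi\rangle$ only sees the compressions of $S$ and $T$ to $H_A$ and $H_B$) is routine, as are the easy structural remarks used in the first two paragraphs.
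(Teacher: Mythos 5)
Your reduction of the trichotomy to the key claim is correct, and the key claim --- if neither $C$ nor $D$ is subhomogeneous then $C_{\min}(C,D,k,n)\supseteq C_{qa}(k,n)$ --- together with the preceding lemma's inclusion $C_{\min}\subseteq C_{qa}$ does settle exhaustiveness; the exclusivity argument (subhomogeneity $\Rightarrow$ nuclear pair $\Rightarrow$ Tsirelson pair; strong $\Rightarrow$ Tsirelson) is also fine. The engine of the proof --- high-dimensional irreducible representations of a non-subhomogeneous algebra, together with Kaplansky density and POVM renormalization, to realize any finite-dimensional quantum correlation via POVMs in $C$ and $D$ and a vector state on $C\otimes_{\min}D$ --- is the right idea and does work.

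One place where your write-up is imprecise enough that a careful reader would balk: you say Kaplansky density gives $\sigma_C(a^x_i)$ close to the padded $\tilde A^x_i$ \emph{on the subspace $H_A$}, and then claim the compression $q\sigma_C(b^x_i)q$ of the renormalized operators to $H_A$ is close to $A^x_i$ because $\sigma_C(s_\varepsilon)$ acts approximately as $(1+\varepsilon)\cdot 1$ \emph{on $H_A$}. This last inference is not valid as stated: knowing that a positive invertible operator $T$ satisfies $T\eta\approx(1+\varepsilon)\eta$ for $\eta\in H_A$ does not by itself control $T^{-1/2}\eta$, since $T^{-1/2}$ can couple $H_A$ to $H_A^\perp$ in ways that the action of $T$ on $H_A$ alone does not see. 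What saves you is that Kaplansky density actually gives a net $\sigma_C(a^x_{i,\alpha})\to\tilde A^x_i$ in the strong operator topology \emph{on all of $K_C$}, with uniform bounds; the uniform lower bound $\sigma_C(s_{\varepsilon,\alpha})\geq\varepsilon\cdot 1$ then makes inversion and the continuous functional calculus $t\mapsto t^{-1/2}$ SOT-continuous along the net, so $\sigma_C(b^x_{i,\alpha})\to(1+\varepsilon)^{-1}\tilde A^x_i+\tfrac{\varepsilon}{n(1+\varepsilon)}\cdot 1$ in SOT and hence the compressions converge in norm since $H_A$ is finite-dimensional. So the conclusion you want is true, but the justification has to go through SOT approximation on $K_C$ rather than on $H_A$ alone. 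With that repair the argument is sound, and (to the extent I can tell without the original source in front of me) it is in the same spirit as what \cite{tsirelson} does.
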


Here, a \cstar-algebra $A$ is \textbf{subhomogeneous} if there is $n\in \bb N$ such that all irreducible representations of $A$ have dimension at most $n$.  The subhomogeneous \cstar-algebras form a very small subclass of the class of nuclear \cstar-algebras.  The above lemma can thus be construed as saying that if a Tsirelson pair is not a strong Tsirelson pair, then there is a very particular reason for that being the case.

\begin{lem}
The class of Tsirelson pairs is closed under taking quotients, by which we mean, if $(C,D)$ is a Tsirelson pair and $C'$ is a quotient of $C$ and $D'$ is a quotient of $D$, then $(C',D')$ is also a Tsirelson pair.
\end{lem}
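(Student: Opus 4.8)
The plan is to reduce the statement to a completely general functoriality fact about correlation sets together with the defining equality $C_{\min}(C,D,k,n)=C_{qa}(k,n)$ that comes packaged into the definition of a Tsirelson pair. Fix $(k,n)$ and let $\pi_C:C\epi C'$ and $\pi_D:D\epi D'$ be the quotient maps. The two halves to establish are: (a) $C_{\min}(C',D',k,n)\subseteq C_{\min}(C,D,k,n)$ and (b) $C_{\max}(C,D,k,n)\subseteq C_{\max}(C',D',k,n)$. Granting these, we compute
\[
C_{qa}(k,n)\subseteq C_{\min}(C',D',k,n)\subseteq C_{\min}(C,D,k,n)=C_{qa}(k,n),
\]
where the first inclusion uses $C_{\min}(C',D',k,n)\subseteq C_{\max}(C',D',k,n)\subseteq C_{\max}(C,D,k,n)=C_{qa}(k,n)$ wait—this needs the reverse. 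Let me instead argue: by (a) and the fact that $(C,D)$ is Tsirelson, $C_{\min}(C',D',k,n)\subseteq C_{qa}(k,n)$; by (b), since $C_{\max}(C,D,k,n)=C_{qa}(k,n)$, we get $C_{qa}(k,n)\subseteq C_{\max}(C',D',k,n)$; combined with the always-true inclusions $C_{\min}(C',D',k,n)\subseteq C_{\max}(C',D',k,n)\subseteq C_{qc}(k,n)$ and $C_{qa}(k,n)\subseteq C_{\min}$ can't be assumed... The cleanest route is: (a) gives $C_{\min}(C',D',k,n)\subseteq C_{\min}(C,D,k,n)=C_{qa}(k,n)$. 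For the reverse, one shows directly $C_{qa}(k,n)\subseteq C_{\min}(C',D',k,n)$ by observing that $C_{qa}(k,n)=C_{\min}(C,D,k,n)$ and then pushing a given representation through; but since quotienting loses elements, one instead uses that $C_{qa}(k,n)$ already equals the \emph{minimal} correlation set of \emph{any} pair of the relevant free-group C*-algebras $C^*(\bb F(k,n))$ by Theorem \ref{fritz}, and that $C$ (being a factor through which the Tsirelson equality holds) admits... Actually the honest statement is that $C_{qa}(k,n)$ is achieved already by finite-dimensional data (Theorem \ref{oasync}/the $C_q$ description), which maps into \emph{any} unital C*-algebra; so $C_{qa}(k,n)\subseteq C_{\min}(C',D',k,n)$ holds for every pair of unital C*-algebras, giving the needed reverse inclusion. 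Hence $C_{\min}(C',D',k,n)=C_{qa}(k,n)=C_{\max}(C',D',k,n)$ once (b) is in hand.

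The key step (a) is straightforward: given POVMs $A^x=(A^x_a)_a$ in $C'$ and $B^y$ in $D'$ and a state $\vp$ on $C'\otimes_{\min}D'$, lift each projection-free POVM entrywise to positive contractions in $C$, $D$ (possible because quotient maps of C*-algebras are surjective on positive contractions and one can then renormalize the last coordinate to restore $\sum_a A^x_a=1$, or more simply use that the quotient $C^n\epi (C')^n$ restricts to a surjection of POVM-spaces after a small correction — this is a routine C*-fact). Then $\pi_C\otimes_{\min}\pi_D:C\otimes_{\min}D\epi C'\otimes_{\min}D'$ is a surjective $*$-homomorphism, and pulling $\vp$ back along it gives a state on $C\otimes_{\min}D$ producing the same correlation. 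Taking closures preserves the inclusion. Step (b) is the analogous argument on the maximal side: here the universal property of $\otimes_{\max}$ makes $\pi_C\otimes_{\max}\pi_D$ a surjection $C\otimes_{\max}D\epi C'\otimes_{\max}D'$, and given data $(A^x,B^y,\vp)$ on the $C,D$ side one \emph{pushes forward}: set $(A')^x_a:=\pi_C(A^x_a)$, $(B')^y_b:=\pi_D(B^y_b)$, which are again POVMs, and let $\vp'$ be any state on $C'\otimes_{\max}D'$ with $\vp'\circ(\pi_C\otimes_{\max}\pi_D)=\vp$ — such $\vp'$ exists because states extend/descend along surjective $*$-homomorphisms exactly when the kernel is in the kernel of $\vp$, which may fail, so instead one simply notes $\vp$ need not descend; rather, one takes a fresh state on $C'\otimes_{\max}D'$. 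The correct move on the max side is: any correlation in $C_{\max}(C,D,k,n)$ already lies in $C_{qc}(k,n)$, and conversely every element of $C_{qc}(k,n)$ is realized with data in $C^*(\bb F(k,n))$ (Theorem \ref{fritz}), which surjects onto $C'$; hence the correlation is realized in $C'\otimes_{\max}C'$-type data, giving $C_{\max}(C,D,k,n)\subseteq C_{qc}(k,n)\subseteq C_{\max}(C',D',k,n)$ provided $C'$ is a quotient of $C^*(\bb F(k,n))$. This last containment must be reconciled with the hypothesis structure.

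The main obstacle I anticipate is exactly the bookkeeping in the previous paragraph: making precise in which direction POVMs and states transport (POVMs push forward under $*$-homomorphisms, states pull back), and ensuring that the Tsirelson equality $C_{\min}(C,D,k,n)=C_{qa}(k,n)$ is used in the one spot where it is genuinely needed — namely to conclude that after transporting we have landed back in $C_{qa}(k,n)$ on both the min and max sides, so that $C_{\min}(C',D',k,n)$ and $C_{\max}(C',D',k,n)$ are squeezed between $C_{qa}(k,n)$ and themselves. I would organize the final write-up as: (i) recall that POVM entries push forward along surjective unital $*$-homomorphisms; (ii) recall functoriality of $\otimes_{\min}$ and $\otimes_{\max}$ under such maps; (iii) deduce $C_{\min}(C,D,k,n)\supseteq C_{\min}(C',D',k,n)$ and $C_{\max}(C,D,k,n)\subseteq C_{\max}(C',D',k,n)$ by transporting data and taking closures; (iv) invoke the hypothesis and the universal inclusions to sandwich and conclude equality for $(C',D')$. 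Each of steps (i)–(iii) is a short standard computation; the only subtlety is (iv), which is pure set-chasing once the inclusions are on the table.
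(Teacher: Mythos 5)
There is a genuine gap, and it is load-bearing. Your claim that ``$C_{qa}(k,n)\subseteq C_{\min}(C',D',k,n)$ holds for every pair of unital \cstar-algebras'' (because finite-dimensional data ``maps into any unital \cstar-algebra'') is false: a unital embedding of $M_d(\bb C)$ into $C'$ requires $C'$ to contain a unital copy of $M_d(\bb C)$, which fails for abelian, finite-dimensional, and more generally subhomogeneous algebras. Concretely, for $C'=D'=\bb C$, every POVM is a probability vector and every state on $\bb C\otimes_{\min}\bb C=\bb C$ is the identity, so $C_{\min}(\bb C,\bb C,k,n)$ is just the set of product correlations, a proper subset of $C_{qa}(k,n)$. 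If your claim were true, every Tsirelson pair would automatically be a \emph{strong} Tsirelson pair, contradicting the trichotomy lemma immediately preceding this one in the paper. A concrete instance of the failure: $(\cal Q\oplus\bb C,\cal Q\oplus\bb C)$ is a strong Tsirelson pair (nuclear, non-subhomogeneous) with quotient pair $(\bb C,\bb C)$, which is a Tsirelson pair (nuclear) but not strong; your argument would wrongly conclude $C_{\min}(\bb C,\bb C,k,n)=C_{qa}(k,n)$.

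Your proposed inclusion (b), $C_{\max}(C,D,k,n)\subseteq C_{\max}(C',D',k,n)$, is also false, not merely unestablished: with the same example, $C_{\max}(\cal Q\oplus\bb C,\cal Q\oplus\bb C,k,n)=C_{qa}(k,n)$ while $C_{\max}(\bb C,\bb C,k,n)$ is the set of product correlations. As you yourself half-notice mid-paragraph, states do not push forward along $\pi_C\otimes\pi_D$, and the detour through $C_{qc}(k,n)$ and $C^*(\bb F(k,n))$ requires $C'$ to be a quotient of $C^*(\bb F(k,n))$, which is not among the hypotheses. The only inclusions that \emph{do} follow from the lifting/pullback mechanism you correctly describe in step (a) are $C_\alpha(C',D',k,n)\subseteq C_\alpha(C,D,k,n)$ for \emph{both} $\alpha\in\{\min,\max\}$ (lift POVMs along the surjections, pull back the state along the surjective $*$-homomorphism $\pi_C\otimes_\alpha\pi_D$). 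But with both inclusions pointing the same way, the sandwich does not close: knowing $C_{\max}(C,D)=C_{\min}(C,D)$ and $C_\alpha(C',D')\subseteq C_\alpha(C,D)$ does not yield $C_{\max}(C',D')=C_{\min}(C',D')$. A correct proof needs an additional ingredient beyond pure transport of data; in particular it cannot aim to show $(C',D')$ is always \emph{strong}, and the trichotomy lemma (and the case where $C'$ or $D'$ becomes subhomogeneous after quotienting, whence $(C',D')$ is a nuclear pair) should be expected to enter.
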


A consequence of the preceding lemma is the following result, which shows that there are many examples of (strong) Tsirelson pairs that are not nuclear pairs:

\begin{prop}
If $C$ has QWEP, then $(C,D)$ is a Tsirelson pair.
\end{prop}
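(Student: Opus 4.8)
The plan is to use the closure of Tsirelson pairs under quotients to reduce to the case where $C$ has the WEP, and then to exploit the characterization of the WEP as relative weak injectivity to collapse the maximal tensor product, at the level of correlations, onto the minimal one.

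First, since $C$ has the QWEP, fix a \cstar-algebra $E$ with the WEP together with a surjective unital $*$-homomorphism $q\colon E\twoheadrightarrow C$. As $C$ is a quotient of $E$ and $D$ is (trivially) a quotient of $D$, the lemma on closure of Tsirelson pairs under quotients reduces the problem to showing that $(E,D)$ is a Tsirelson pair, so I may assume henceforth that $C$ itself has the WEP. By the lemma recording $C_{\min}(C,D,k,n)\subseteq C_{\max}(C,D,k,n)$, only the reverse inclusion is at issue; and since $C_{\max}(C,D,k,n)$ is by definition the closure of the correlations of the form $\varphi(A^x_a\otimes B^y_b)$ — with $A^1,\dots,A^k$ POVMs of length $n$ in $C$, $B^1,\dots,B^k$ POVMs of length $n$ in $D$, and $\varphi$ a state on $C\otimes_{\max}D$ — while $C_{\min}(C,D,k,n)$ is closed, it suffices to fix one such correlation $p$ and show $p\in C_{\min}(C,D,k,n)$.

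Given $p$, I would pass to the GNS representation $\pi_\varphi\colon C\otimes_{\max}D\to B(H)$ of $\varphi$, with cyclic vector $\xi$; by the universal property of the maximal tensor product this restricts to unital $*$-homomorphisms $\pi\colon C\to B(H)$ and $\rho\colon D\to B(H)$ with commuting ranges, with $p(a,b|x,y)=\langle\pi(A^x_a)\rho(B^y_b)\xi,\xi\rangle$. The WEP now enters: one invokes Kirchberg's theory of the WEP, according to which the relative weak injectivity of $C$ forces that, for \emph{any} \cstar-algebra $D$ and any commuting pair of representations $\pi,\rho$ as above, the $*$-homomorphism $C\otimes_{\max}D\to B(H)$ sending $c\otimes d$ to $\pi(c)\rho(d)$ factors through the canonical quotient $C\otimes_{\max}D\twoheadrightarrow C\otimes_{\min}D$. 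Composing the resulting representation of $C\otimes_{\min}D$ with the vector state at $\xi$ then produces a state on $C\otimes_{\min}D$ whose value on $A^x_a\otimes B^y_b$ is $p(a,b|x,y)$, giving $p\in C_{\min}(C,D,k,n)$; hence $C_{\max}(C,D,k,n)\subseteq C_{\min}(C,D,k,n)$ and $(C,D)$ is a Tsirelson pair.

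I expect the main obstacle to be precisely this last step — extracting from the WEP of $C$ the required control over commuting tensor factorizations. When $C$ is moreover exact a hands-on argument is available: restrict $\varphi$ to the separable \cstar-subalgebra $D_0\subseteq D$ generated by the $B^y_b$, present $D_0$ as a quotient of $C^*(\mathbb F_\infty)$, lift the POVMs $B^y$ through that quotient by the Choi--Effros lifting theorem (using that $\mathbb C^n$ is nuclear), pull $\varphi$ back to a state on $C\otimes_{\max}C^*(\mathbb F_\infty)$, note $C\otimes_{\max}C^*(\mathbb F_\infty)=C\otimes_{\min}C^*(\mathbb F_\infty)$ since $C$ has the WEP (Kirchberg's characterization of the WEP via nuclear pairs with $C^*(\mathbb F_\infty)$), and finally push back down to $C\otimes_{\min}D_0$ using exactness of $C$. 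This route, however, does not cover non-exact WEP algebras such as $B(H)$, which one genuinely needs (for instance when treating $C$ a quotient of $B(H)$), so for the general statement one must invoke the full strength of Kirchberg's relative-weak-injectivity description of the WEP. The reduction to the WEP case and the GNS bookkeeping are routine by comparison.
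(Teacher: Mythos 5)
Your overall plan—reduce to the WEP case via the quotient lemma and then use Kirchberg's theory of the WEP—is the right shape, but the key step in your main argument is false. You claim that the WEP (relative weak injectivity) of $C$ forces every $*$-homomorphism $C\otimes_{\max}D\to B(H)$ arising from a commuting pair of representations to factor through $C\otimes_{\min}D$. If that were true, then $C\otimes_{\max}D=C\otimes_{\min}D$ for every $D$, i.e.\ $C$ would be nuclear. But the WEP is strictly weaker than nuclearity: $B(H)$ has the WEP (being injective), yet $B(H)\otimes_{\max}B(H)\neq B(H)\otimes_{\min}B(H)$ by a theorem of Junge and Pisier. So the GNS bookkeeping step does not go through as stated, and you correctly flag this as the weak point but do not repair it.

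Your ``exact case'' fallback is much closer to the intended proof, but it misuses exactness and therefore misses the point. What the paper actually does is apply the quotient lemma in \emph{both} coordinates, not just the first. Present $C$ as a quotient of some $E$ with the WEP, and (after restricting to the separable subalgebra of $D$ generated by the finitely many POVMs at hand) present the relevant $D_0$ as a quotient of $C^*(\mathbb F_\infty)$. By Kirchberg's theorem, $E$ having the WEP means exactly that $(E,C^*(\mathbb F_\infty))$ is a nuclear pair, hence trivially a Tsirelson pair. The quotient lemma—applied simultaneously to the quotient $E\twoheadrightarrow C$ and to the quotient $C^*(\mathbb F_\infty)\twoheadrightarrow D_0$—then gives directly that $(C,D_0)$ (and hence $(C,D)$) is a Tsirelson pair. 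This sidesteps the exactness issue entirely: you never need to push a state from $C\otimes_{\min}C^*(\mathbb F_\infty)$ down to $C\otimes_{\min}D_0$, because the relevant state-pullback and Choi--Effros POVM-lifting arguments were already absorbed into the proof of the quotient lemma itself. The moral is that once you have the quotient lemma in hand, the only operator-algebraic input you need from the WEP is Kirchberg's equivalence with being a nuclear pair against $C^*(\mathbb F_\infty)$; you should not try to re-derive any factorization property of $C\otimes_{\max}D$ for general $D$.
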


In other words, QWEP \cstar-algebras have the Tsirelson property in the sense of the following definition:

\begin{defn}
$C$ has the \textbf{Tsirelson property (TP)} if $(C,D)$ is a Tsirelson pair for any \cstar-algebra $D$.
\end{defn}

We observed the following facts about \cstar-algebras with the TP:

\begin{prop}

    \
    
    \begin{enumerate}
        \item $C$ has the TP if and only if $(C,C^*(\mathbb{F}_\infty))$ is a Tsirelson pair.
        \item The class of \cstar-algebras with TP is closed under direct limits, quotients, relatively weakly injective subalgebras, and ultraproducts.  In particular, the class of \cstar-algebras with the TP is an axiomatizable class.
    \end{enumerate}  

\end{prop}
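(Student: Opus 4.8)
The plan is to establish part (1) first and then deploy it throughout part (2), using repeatedly the lemma that the class of Tsirelson pairs is closed under quotients. For (1) the forward implication is immediate from the definition of the TP, so the content is the converse: assuming $(C, C^*(\mathbb{F}_\infty))$ is a Tsirelson pair, I would show that $(C, D)$ is a Tsirelson pair for every \cstar-algebra $D$. First reduce to the case that $D$ is separable; this is harmless because any correlation in $C_{\min}(C, D, k, n)$ or $C_{\max}(C, D, k, n)$ involves only the $kn$ operators comprising the POVMs drawn from $D$, so one may work inside the separable subalgebra they generate — the minimal-tensor half of this reduction is routine by injectivity of $\otimes_{\min}$, while the maximal half is cleanest via the description of maximal correlations through pairs of commuting representations. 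Once $D$ is separable it is a quotient of $C^*(\mathbb{F}_\infty)$, and applying the quotient-closure lemma to $(C, C^*(\mathbb{F}_\infty))$, with $C$ regarded as a quotient of itself, yields that $(C, D)$ is a Tsirelson pair.

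For (2), closure under quotients is then immediate: if $C$ has the TP and $C'$ is a quotient of $C$, fix $D$, view $D$ as a quotient of itself, and apply the quotient-closure lemma to the Tsirelson pair $(C, D)$. For direct limits $C = \varinjlim_i C_i$ with each $C_i$ having the TP, I would use that both $\otimes_{\min}$ and $\otimes_{\max}$ are continuous with respect to inductive limits, so that $\bigcup_i (C_i \otimes_\alpha D)$ is norm-dense in $C \otimes_\alpha D$ for $\alpha \in \{\min,\max\}$; combined with a perturbation argument in the spirit of Lemma \ref{PVMdefinability} (any POVM in $C$ is norm-approximated by a genuine POVM supported in some $C_i$), this gives $C_{\min}(C, D, k, n) = \overline{\bigcup_i C_{\min}(C_i, D, k, n)} = \overline{\bigcup_i C_{\max}(C_i, D, k, n)} = C_{\max}(C, D, k, n)$, the middle equality being the TP of each $C_i$.

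The substantive cases, which I expect to be the main obstacle, are relatively weakly injective subalgebras and ultraproducts. If $C \subseteq B$ is relatively weakly injective and $B$ has the TP, then relative weak injectivity supplies an isometric inclusion $C \otimes_{\max} D \hookrightarrow B \otimes_{\max} D$, so a maximal correlation of $(C, D)$ — realized by POVMs lying in $C \subseteq B$ — extends to one of $(B, D)$ and hence, by the TP of $B$, is a minimal correlation of $(B, D)$; since the inclusion $C_{\min}(C, D, k, n) \subseteq C_{\max}(C, D, k, n)$ is automatic, the whole difficulty is to descend this minimal correlation back into $(C, D)$, the issue being that its witnessing data a priori lives in the larger algebra $B$. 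Here I would bring in the unital completely positive map $\Phi \colon B \to C^{**}$ witnessing relative weak injectivity (the identity on $C$), transport the $B$-side POVMs into $C^{**}$, and then replace them by POVMs in $C$ via a Kaplansky-density approximation with the error controlled as in Lemma \ref{PVMdefinability} — possibly after first invoking part (1) to reduce to the fixed algebra $D = C^*(\mathbb{F}_\infty)$, where Kirchberg's work gives good control over the minimal and maximal tensor products. The ultraproduct case $C = \prod_{\mathcal{U}} C_i$ is treated in the same spirit, representing a POVM in the ultraproduct by a sequence of approximate — hence, after perturbation, genuine — POVMs in the $C_i$ and pushing a state on $\bigl(\prod_{\mathcal{U}} C_i\bigr) \otimes_{\max} D$ through the canonical map to $\prod_{\mathcal{U}} (C_i \otimes_{\max} D)$. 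The shared obstruction is that the maximal tensor product is neither injective nor stable under ultraproducts, so a correlation cannot simply be descended from an ambient algebra to a subalgebra by a bare inclusion of correlation sets: the weak-expectation map and a density argument at the level of POVMs are genuinely needed.

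Finally, the axiomatizability assertion follows formally: the class is closed under ultraproducts and, because $C$ sits inside its ultrapower $C^{\mathcal{U}}$ as a relatively weakly injective subalgebra, also under ultraroots, hence it is elementary by the standard ultraproduct/ultraroot criterion in continuous logic; closure under direct limits moreover lets the axioms be taken of $\forall\exists$ form.
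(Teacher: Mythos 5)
Your handling of part (1) and the quotient and direct-limit cases of part (2) is sound and in the right spirit: part (1) via separabilization and the universality of $C^*(\mathbb{F}_\infty)$ plus quotient-closure; quotients trivially from the quotient-closure lemma; direct limits by perturbing POVMs into a finite stage $C_i$, pulling back the max-state to $C_i\otimes_{\max}D$ through the canonical $*$-homomorphism, invoking the TP of $C_i$, and then extending the resulting min-state from $C_i\otimes_{\min}D$ to $C\otimes_{\min}D$ by injectivity of $\otimes_{\min}$ and Hahn--Banach.

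The two remaining cases, however, are not adequately handled, and you have correctly identified them as the crux but not resolved them. For relatively weakly injective subalgebras, after extending $\varphi$ to $\tilde\varphi$ on $B\otimes_{\max}D$ and applying TP of $B$, the witnessing data for membership in $C_{\min}(B,D,k,n)$ consists of \emph{new} POVMs $\tilde A^x$ in $B$ and a state $\psi$ on $B\otimes_{\min}D$; your plan to push $\tilde A^x$ into $C^{**}$ via the ucp $\Phi\colon B\to C^{**}$ and then apply Kaplansky density founders because the state $\psi$ lives on $B\otimes_{\min}D$ and there is no map in the correct direction (from $C^{**}\otimes_{\min}D$ or $C\otimes_{\min}D$ into $B\otimes_{\min}D$ compatible with $\Phi$) that would let you produce a state on $C\otimes_{\min}D$ reproducing $\psi(\tilde A^x_a\otimes\tilde B^y_b)$: ucp maps let you pull states \emph{back}, not push them forward, and the POVMs and state have to be transported together. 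Similarly, for ultraproducts the canonical $*$-homomorphism runs $\bigl(\prod_{\mathcal{U}}C_i\bigr)\otimes_{\max}D\to\prod_{\mathcal{U}}(C_i\otimes_{\max}D)$, and since $\otimes_{\max}$ does not commute with ultraproducts this map is in general not injective, so a state on the domain cannot be ``pushed through'' it; moreover a state on the target ultraproduct does not decompose into a coherent family of states $\psi_i$ on $C_i\otimes_{\max}D$, which is what you would need to apply the TP of each $C_i$. In short, in both cases the obstruction you flagged --- that max-tensor correlations do not descend from an ambient algebra by bare inclusion --- is real, but the mechanism you offer does not resolve it, so the argument for these two closure properties (and hence for the ultraroot step and axiomatizability) is incomplete.
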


\begin{defn}
$C$ has the \textbf{strong Tsirelson property (STP)} if and only if it has the TP and is not subhomogeneous.
\end{defn}

It can be shown that $C$ has the STP if and only if $(C,D)$ is a strong Tsirelson pair for every non-subhomogeneous $D$.  Moreover, since the class of non-subhomogeneous algebras is axiomatizable \cite[Section 2.5]{munster}, it follows that the STP is also an axiomatizable property.

It is natural to ask:  are there explicit axioms for the class of \cstar-algebras with the (S)TP?  Continuing with the theme of this article, the answer is a resounding no, as the following results indicate.

\begin{thm}\label{tsirelsonundecidable}
There is no c.e. theory $T$ in the language of pairs of \cstar-algebras such that all models of $T$ are Tsirelson pairs and at least one model of $T$ is a strong Tsirelson pair.
\end{thm}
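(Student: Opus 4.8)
The plan is to follow the by-now-standard template used above for the $\R$EP and the QWEP undecidability results: assuming a c.e.\ theory $T$ as in the statement exists, I would exhibit, for an arbitrary nonlocal game $\frak G$, a c.e.\ theory $T'_{\frak G}$ and a sentence $\Theta_{\frak G}$ such that $\val^*(\frak G)=\sup\{\Theta_{\frak G}^{M}:M\models T'_{\frak G}\}$; the Completeness theorem (Theorem \ref{completeness}) then makes $\val^*(\frak G)$ right c.e.\ uniformly in $\frak G$, and since $\val^*(\frak G)$ is also left c.e.\ uniformly in $\frak G$ (brute-force search over finite-dimensional quantum strategies), it becomes computable uniformly in $\frak G$, contradicting Theorem \ref{mip} (a uniform $\tfrac14$-approximation to $\val^*(\frak G_{\cal M})$ would decide the halting problem).

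To set this up, fix $\frak G$ with $k$ questions and $n$ answers. Let $L'$ be the language of pairs of \cstar-algebras, with sorts for the two algebras $C$ and $D$, augmented by: a third \cstar-algebra sort $E$; unital $*$-homomorphism symbols $\iota_C\colon C\to E$ and $\iota_D\colon D\to E$; a unary predicate $\psi$ on $E$; and constants $A^x_a$ in $C$ and $B^y_b$ in $D$ for $x,y\in[k]$, $a,b\in[n]$. Let $T'_{\frak G}$ consist of $T$ (forcing the $(C,D)$-reduct to be a Tsirelson pair), the axioms saying that $\iota_C,\iota_D$ are unital $*$-homomorphisms with commuting ranges, the axioms saying that $\psi$ is a state on $E$, and the axioms saying that $(A^x_1,\dots,A^x_n)$ and $(B^y_1,\dots,B^y_n)$ are POVMs (positivity and summing to $1$ being definable conditions). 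Put
$$\Theta_{\frak G}:=\sum_{(x,y)\in[k]^2}\pi_{\frak G}(x,y)\sum_{(a,b)\in[n]^2}D_{\frak G}(x,y,a,b)\,\psi\!\left(\iota_C(A^x_a)\,\iota_D(B^y_b)\right),$$
which is an $L'$-sentence with values in $[0,1]$. In any $M\models T'_{\frak G}$ with underlying pair $(C_0,D_0)$, the universal property of $\otimes_{\max}$ furnishes a state $\varphi$ on $C_0\otimes_{\max}D_0$ with $\varphi(A^x_a\otimes B^y_b)=\psi(\iota_C(A^x_a)\iota_D(B^y_b))$, so the correlation $p(a,b|x,y):=\psi(\iota_C(A^x_a)\iota_D(B^y_b))$ lies in $C_{\max}(C_0,D_0,k,n)$ and $\Theta_{\frak G}^{M}=\val(\frak G,p)$; conversely, every correlation of the form appearing in the definition of $C_{\max}(C_0,D_0,k,n)$ arises in this way, taking $E=C_0\otimes_{\max}D_0$ with the canonical embeddings and a realizing state. (No axiom forcing $E$ to be the maximal tensor product is needed: any state on such an $E$ pulls back along $C_0\otimes_{\max}D_0\twoheadrightarrow C^*(\iota_C(C_0)\cup\iota_D(D_0))$.)

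The identity to establish is $\val^*(\frak G)=\sup\{\Theta_{\frak G}^{M}:M\models T'_{\frak G}\}$. For ``$\geq$'': every model of $T$ is a Tsirelson pair, so for any $M\models T'_{\frak G}$ with pair $(C_0,D_0)$ the correlation $p$ above lies in $C_{\max}(C_0,D_0,k,n)=C_{\min}(C_0,D_0,k,n)\subseteq C_{qa}(k,n)$, whence $\Theta_{\frak G}^{M}=\val(\frak G,p)\leq\sup_{q\in C_{qa}(k,n)}\val(\frak G,q)=\val^*(\frak G)$. For ``$\leq$'': by hypothesis there is $(C^*,D^*)\models T$ that is a \emph{strong} Tsirelson pair, so $C_{\max}(C^*,D^*,k,n)=C_{qa}(k,n)$; since $C_{\max}(C^*,D^*,k,n)$ is by definition the closure of the set of correlations realized by $(C^*,D^*)$ and $q\mapsto\val(\frak G,q)$ is continuous, the values $\val(\frak G,q)$ over correlations $q$ realized by $(C^*,D^*)$ are dense in $\{\val(\frak G,q):q\in C_{qa}(k,n)\}$, and each such $q$ equals $\Theta_{\frak G}^{M}$ for a model $M\models T'_{\frak G}$ built from $(C^*,D^*)$; taking suprema gives $\val^*(\frak G)\leq\sup\{\Theta_{\frak G}^{M}:M\models T'_{\frak G}\}$. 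Since $T'_{\frak G}$ is c.e.\ uniformly in $\frak G$, Theorem \ref{completeness} gives $\val^*(\frak G)=\inf\{r\in\mathbb{Q}^{>0}:T'_{\frak G}\vdash\Theta_{\frak G}\dotminus r\}$, an infimum of a c.e.\ set of rationals uniformly in $\frak G$, so $\val^*(\frak G)$ is right c.e.\ uniformly in $\frak G$, and we conclude as in the first paragraph.

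The main obstacle is the ``$\leq$'' half of this identity, and it is precisely here that one uses the \emph{strong} Tsirelson property of the single good model rather than mere Tsirelson-ness: we need the correlations realizable from $(C^*,D^*)$ alone to be dense in all of $C_{qa}(k,n)$, i.e.\ $C_{\max}(C^*,D^*,k,n)=C_{qa}(k,n)$, whereas a merely Tsirelson pair only gives $C_{\max}=C_{\min}$, which can be a proper subset of $C_{qa}(k,n)$ (as happens when one of the two algebras is subhomogeneous). A secondary technical point is the formulation of $L'$: the auxiliary sort $E$ carrying a state and commuting copies of $C$ and $D$ is the device that lets one ``quantify over states on the maximal tensor product'' while keeping the whole theory c.e., and the remaining verifications (definability of the POVM conditions, and that $\Theta_{\frak G}$ is genuinely $[0,1]$-valued in models of $T'_{\frak G}$) are routine.
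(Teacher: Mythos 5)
Your proposal is correct and follows essentially the same route as the paper: expand the language to encode states on $C\otimes_{\max}D$, express $\val^*(\frak G)$ as the supremum of a sentence over models of a c.e.\ theory (the $\geq$ direction from all models being Tsirelson pairs, the $\leq$ direction from the existence of a strong Tsirelson pair model), apply Completeness to get right-c.e.-ness, and contradict $\mre$. The only differences are implementation details: the paper uses a single additional binary predicate $P(c,d)$ interpreting the max-tensor-product state and quantifies over POVMs inside a game-dependent sentence $\sigma_{\frak G}$, keeping one fixed theory $T'$, whereas you introduce an auxiliary sort $E$ with commuting $*$-homomorphisms and a state, together with game-dependent constants for the POVMs, yielding a game-dependent theory $T'_{\frak G}$ — both devices are equivalent ways to "quantify over states on $C\otimes_{\max}D$'' while staying c.e.
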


Note the requirement that at least one model of $T$ being a strong Tsirelson pair is necessary in the previous theorem, as, for example, the theory $T$ of pairs of abelian \cstar-algebras is c.e. and all models are Tsirelson pairs.

Before giving the proof of Theorem \ref{tsirelsonundecidable}, we give a couple of corollaries:

\begin{cor}
There is no c.e. theory $T$ in the language of \cstar-algebras such that all models have the TP and at least one model with the STP.
\end{cor}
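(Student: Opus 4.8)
The plan is to reduce to Theorem \ref{tsirelsonundecidable} by a relativization argument that converts a hypothetical effective axiomatization of the TP into an effective axiomatization of Tsirelson pairs. Suppose, towards a contradiction, that $T$ is a c.e. theory in the language of \cstar-algebras such that every model of $T$ has the TP and at least one model $C_0$ of $T$ has the STP. Working in the language of pairs of \cstar-algebras, let $T'$ be the theory obtained by relativizing each axiom of $T$ to the first coordinate of the pair (imposing nothing on the second coordinate beyond the \cstar-algebra axioms already present in the language). Since $T$ is c.e. and relativization is a computable syntactic operation, $T'$ is c.e.

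First I would verify that every model of $T'$ is a Tsirelson pair. If $(C,D)\models T'$, then $C\models T$, so $C$ has the TP; but by definition the TP says precisely that $(C,D)$ is a Tsirelson pair for \emph{every} \cstar-algebra $D$, so in particular $(C,D)$ is a Tsirelson pair. Hence all models of $T'$ are Tsirelson pairs.

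Next I would produce a model of $T'$ that is a \emph{strong} Tsirelson pair. The clean choice is the diagonal pair $(C_0,C_0)$: since $C_0\models T$, we have $(C_0,C_0)\models T'$. Because $C_0$ has the STP, $C_0$ is not subhomogeneous, and the characterization recorded earlier, namely that $C$ has the STP if and only if $(C,D)$ is a strong Tsirelson pair for every non-subhomogeneous $D$, applies with $D=C_0$ (legitimate precisely because $C_0$ is not subhomogeneous). Thus $(C_0,C_0)$ is a strong Tsirelson pair. Therefore $T'$ is a c.e. theory all of whose models are Tsirelson pairs and with at least one model that is a strong Tsirelson pair, contradicting Theorem \ref{tsirelsonundecidable}.

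The argument is mostly bookkeeping, and I do not expect a serious obstacle; the one point to check carefully against the set-up of \cite{tsirelson} is that relativizing $T$ to one coordinate of the pair language genuinely yields a c.e. theory whose models are exactly the pairs whose first coordinate satisfies $T$, and that a diagonal pair $(C_0,C_0)$ is an admissible structure in that language, both of which hold because the language of pairs of \cstar-algebras is, up to renaming sorts, two disjoint copies of the language of \cstar-algebras. One could equally well let $T'$ demand that \emph{both} coordinates model $T$, at no extra cost, since the witnessing pair is still the diagonal $(C_0,C_0)$. Finally, as with Theorem \ref{tsirelsonundecidable}, the hypothesis that some model has the STP is genuinely needed: the theory of abelian \cstar-algebras is c.e. and all of its models have the TP (being nuclear), but none has the STP, since abelian algebras are subhomogeneous.
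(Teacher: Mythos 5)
Your proof is correct and is essentially the intended reduction: the paper presents this as an immediate corollary of Theorem~\ref{tsirelsonundecidable}, and relativizing the hypothetical theory $T$ to one coordinate of the pair language, then using the diagonal pair $(C_0,C_0)$ (where $C_0$ is the STP model and hence non-subhomogeneous) to witness a strong Tsirelson pair, is exactly the natural argument. The only notational point worth flagging is that the paper never pins down the language of pairs beyond what is needed in the proof of Theorem~\ref{tsirelsonundecidable}; your remark that it is two disjoint (sorted) copies of the \cstar-algebra language, so that relativization is effective and diagonal pairs are legitimate structures, is precisely the bookkeeping that makes the reduction go through.
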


The previous corollary allows us to improve Theorem \ref{AGH} from the previous subsection:

\begin{cor}
There is no c.e. theory $T$ in the language of \cstar-algebras such that all models have the QWEP and at least one model that is not subhomogeneous.
\end{cor}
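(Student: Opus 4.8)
The plan is to deduce this directly from the preceding corollary on the (S)TP, so that essentially no new work is required beyond tracking definitions. First I would assume, toward a contradiction, that there is a c.e.\ theory $T$ in the language of \cstar-algebras such that every model of $T$ has the QWEP and at least one model $A\models T$ is not subhomogeneous.

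Next I would invoke the Proposition stating that if a \cstar-algebra $C$ has QWEP then $(C,D)$ is a Tsirelson pair for \emph{every} \cstar-algebra $D$; in the terminology introduced afterward this says precisely that every QWEP \cstar-algebra has the TP. Applying this to each model of $T$ yields that all models of $T$ have the TP. In particular the model $A$ has the TP and, by hypothesis, is not subhomogeneous, so by the definition of the STP, $A$ has the STP.

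We have therefore produced a c.e.\ theory $T$ all of whose models have the TP and at least one of whose models has the STP, contradicting the previous corollary; this proves the result. I would close with the remark that this genuinely improves Theorem \ref{AGH}, since the hypothesis there that $T$ have an infinite-dimensional monotracial model with faithful trace is replaced here by the strictly weaker requirement that $T$ have a model that is not subhomogeneous. The only point needing (minor) attention is that the Proposition must be used in its uniform form — for all $D$ at once — so that one obtains the full TP rather than merely the Tsirelson-pair property against a single fixed $D$; there is no genuine obstacle here.
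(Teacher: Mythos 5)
Your proof is correct and is exactly the argument the paper intends (the paper states this as an immediate corollary without writing it out): apply the Proposition that QWEP implies TP to see that all models of $T$ have the TP, observe that the assumed non-subhomogeneous model then has the STP by definition, and invoke the preceding corollary. Your closing remark correctly identifies why this improves Theorem~\ref{AGH}: for a subhomogeneous algebra the GNS closure of a faithful trace lives in a direct integral of uniformly bounded matrix algebras, so if it were also a factor (forced by monotraciality) it would be a single matrix algebra and $A$ would be finite-dimensional; thus ``infinite-dimensional, monotracial, faithful trace'' is indeed a strictly stronger hypothesis than ``not subhomogeneous.''
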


\begin{proof}[Proof of Theorem \ref{tsirelsonundecidable}]  Suppose, towards a contradiction, that such a c.e. theory $T$ exists.  Expand the language by a new unary predicate symbol $P$ and let $T'$ be the c.e. extension of $T$ stating that its models are of the form $(C,D,P)$, where $P(c,d)=\varphi(c\otimes d)$ for some state $\varphi$ on $C\otimes_{\max}D$; this is possible since states on $C\otimes_{\max}D$ ``are'' just extensions of unital linear functionals on $C\odot D$ that are positive on $C\odot D$.

Given a nonlocal game $\mathfrak{G}$, we can consider the universal sentence $\sigma_{\mathfrak{G}}$ in this extended language given by $$\sup_{\vec A}\sup_{\vec B} \sum_{(x,y)\in [k]}\pi(x,y)\sum_{(a,b)\in [n]}D(x,y,a,b)P(A^x_a,B^y_b),$$ where the quantifications over POVMs here is legitimate (and effective) since they can be shown to form a definable set (relative to the theory $T'$).  The assumptions on the theory show that  $$\sup\{\sigma_{\mathfrak{G}}^{(C,D,P)} \ : \ (C,D,P)\models T'\}=\operatorname{val}^*(\mathfrak{G}).$$
The inequality $\leq$ uses that all models of $T$ are Tsirelson pairs while the inequality $\geq$ uses that at least one model is a strong Tsirelson pair.  Running proofs from $T'$, we get computable upper bounds to $\operatorname{val}^*(\mathfrak{G})$.  Since this procedure is uniform in $\frak G$, we obtain a contradiction to Theorem \ref{mip}.
\end{proof} 

\section{The Aldous-Lyons conjecture}

In this section, we describe a very recent result showing how (a variant of) Theorem \ref{mip} was used to settle a prominent conjecture in probability theory.

\subsection{Sofic groups and graphs}
A (countable, discrete) group $G$ is called \textbf{hyperlinear} if the group von Neumann algebra $L(G)$ embeds into $\R^\u$.  As shown by Radulescu \cite{radulescu}, $G$ is hyperlinear if and only if it embeds into a metric ultraproduct $\prod_\u U(n)$ of finite-dimensional unitary groups, where each unitary group is equipped with the metric associated to the normalized trace on $M_n(\bb C)$.  Although in theory it is possible to construct a somewhat explicit counterexample to CEP from the somewhat explicit correlation belonging to $C_{qc}(k,n)\setminus C_{qa}(k,n)$ (see Remark \ref{separation}), it is not at all clear that this counterexample is a group von Neumann algebra.  Thus, the existence of a non-hyperlinear group remains an open problem.

An even weaker question remains open, that is, whether or not there exists a group that is not \textbf{sofic}.  Here, a group is sofic if it embeds into a metric ultraproduct $\prod_\u S_n$ of finite symmetric groups, where each $S_n$ is equipped with its normalized Hamming metric measuring the proportion of elements on which two permutations disagree.  By associating to each permutation its associated permutation matrix, it is straightforward to see that every sofic group is hyperlinear.  Sofic groups were introduced by Gromov in \cite{gromov}, where he showed that Gottschalk’s Surjectivity Conjecture holds for the class of sofic groups; the term ``sofic'' was coined by Weiss in \cite{weiss}.  For a thorough discussion of the class of sofic groups, see the author's book \cite[Chapter 13]{ultrabook}.

The notion of soficity was later extended to the setting of graphs; the context is as follows.  A \textbf{rooted graph} is a pair $(G,o)$, where $G$ is a graph and $o$ is a vertex of $G$.  An isomorphism of rooted graphs is a root-preserving isomorphism of graphs.  Given $r>0$, we let $B_r(G,o)$ denote those vertices that lie at a distance at most $r$ from the root $o$, which is then a rooted graph in its own right.  By a \textbf{random rooted graph} we simply mean a probability space whose elements are rooted graphs.  We abuse notation and denote a random rooted graph by $(G,o)$ as if it were deterministic.  We view a finite graph as a random rooted graph by uniformly randomly selecting a vertex.  A notion of convergence for random rooted graphs was introduced by Benjamini and Schramm in \cite{BS}:  a sequence $(G_n,o_n)$ of random rooted graphs \textbf{Benjamini-Schramm converges} to the random rooted graph $(G,o)$ if, for any $r>0$ and any finite rooted graph $(H,p)$, the probability that $B_r(G_n,o_n)\cong (H,p)$ converges as $n\to \infty$ to the probability that $B_r(G,o)\cong (H,p)$.  A random rooted graph is called \textbf{sofic} if it is the Benjamini-Schramm limit of a sequence of finite graphs (viewed as random rooted graphs as mentioned above).  

One can verify that a finitely generated group is sofic if and only if one (equiv. any) of its Cayley graphs is sofic, where the Cayley graph is viewed as a deterministic rooted graph with root given by the identity of the group.

\subsection{Unimodular graphs and the Aldous-Lyons conjecture} Following Aldous and Lyons \cite{AL}, call a random rooted graph $(G,o)$ \textbf{unimodular} if, for any nonnegative function\footnote{For the displayed expectations to be defined, some sort of measurability criteria must be assumed of $f$.} $f$ defined on \emph{doubly rooted graphs} (where two vertices are distinguished), one has that
$$\bb E_{(G,o)}\left[ \sum_{x\in V(G)} f(G,o,x)\right]=\bb E_{(G,o)}\left[ \sum_{x\in V(G)} f(G,x,o)\right].$$
This is often motivated by calling $f$ a \emph{mass transport function}, where $f(G,o,x)$ denotes the amount of mass transported from $o$ to $x$, and then the unimodularity condition asserts that the expected amount of mass leaving the root is equal to the expexcted amount of mass entering the root.  The term unimodular stems from the fact that when the random rooted graph is simply a deterministic, transitive graph (meaning that the automorphism group acts transitively on its vertices) with any choice of root, then the graph is unimodular precisely when the automorphism group is a unimodular locally compact group (meaning its left and right Haar measures coincide).  Cayley graphs of finitely generated groups are transitive; it can be shown that they are indeed unimodular when viewed as deterministic rooted graphs.  (Something even more general holds; see the result of Abert, Glasner, and Virag in the next subsection.)

It is straightforward to verify that any finite graph, viewed as a random rooted graph, is unimodular.  This behavior persists in the limit, that is, every sofic random rooted graph is unimodular.  The \textbf{Aldous-Lyons conjecture} states that the converse is true, namely that the classes of sofic and unimodular random rooted graphs coincide.  Since the Cayley graph of a finitely generated group is unimodular, the Aldous-Lyons conjecture would imply that the graph is sofic, which, as mentioned above, would imply that the group itself is sofic.  In other words, the Aldous-Lyons conjecture implies that all finitely generated (and thus all) groups are sofic!

\subsection{Invariant random subgroups and an algebraic consequence of the Aldous-Lyons conjecture} Fix a group $G$ and let $\sub(G)$ denote the set of subgroups of $G$, viewed as a subset of $2^G$ by identifying a subgroup with its characteristic function; as such, $\sub(G)$ is a closed (and thus compact) subset of $2^G$.  A \textbf{random subgroup of $G$} is a Borel probability measure on $\sub(G)$.  Given $g\in G$ and a random subgroup $\mu$ of $G$, we obtain another random subgroup $g\cdot \mu$ of $G$ whose action on Borel sets is given by $(g\cdot \mu)(B):=\mu(g^{-1}Bg)$, where $g^{-1}Bg=\{g^{-1}Hg \ : \ H\in B\}$.  The random subgroup $\mu$ is said to be \textbf{invariant} if $g\cdot \mu=\mu$ for all $g\in G$.  The acronym \textbf{IRS} is used to stand for an invariant random subgroup. 

For example, a normal subgroup of $G$, viewed as a deterministic random subgroup, is an IRS of $G$.  Another example is given by finite index subgroups:  if $H$ is a finite-index subgroup of $G$, then $H$ has only finitely many conjugates, and the uniform counting measure on these conjugates is an IRS of $G$.

Of particular relevance in the sequel is the example of a \textbf{finitely describable} IRS.  If the group $G$ acts on a finite set $X$, then we get an IRS of $G$ by uniformly randomly picking an element of $x\in X$ and then outputting the stabilizer subgroup $G_x$ of $G$.  The IRS thus obtained is called finitely describable.

An important special case of the Aldous-Lyons conjecture is obtained by asking about the connection between a random subgroup of a finitely generated group and the random rooted graph one obtains by considering the associated Schreier coset graph.  Recall that the Schreier coset graph associated to a subgroup $H$ of a finitely generated group $G$ with respect to a finite generating set $S$ has as vertices right cosets of $H$ and with edges connecting $Hg$ and $Hgs$ for $s\in S$.  Viewing the Schreier coset construction as a function from $\sub(G)$ to the set of rooted graphs, the pushforward of an IRS under this function thus yields a random rooted graph.  Abert, Glasner, and Virag \cite[Proposition 14]{AGV} showed that a random subgroup of a group is an IRS precisely when the associated random Schreier graph is unimodular.  On the other hand, in the case that $G=\bb F_S$, the free group on a finite set $S$, the random Schreier graph (with respect to the generating set $S$) associated to an IRS is sofic if and only if the IRS is a weak* limit of finitely describable IRSs.  Such IRSs are called \textbf{co-sofic}.\footnote{The reason behind the terminology is that for a finitely presented group $G=\bb F_S/N$, one has that $G$ is sofic if and only if the deterministic IRS $N$ is co-sofic \cite[Proposition 6.1]{Gelander}.}  Consequently, the Aldous-Lyons conjecture implies that for every finite set $S$, every IRS of $\bb F_S$ is co-sofic.\footnote{It is worth noting that there are some groups for which are all IRSs are co-sofic and some groups for which there are non-co-sofic IRSs \cite{BLT}.}

\subsection{Refuting the Aldous-Lyons conjecture using $\mre$}
The algebraic consequence of the Aldous-Lyons conjecture described in the previous subsection bears a striking resemblance to Tsirelson's problem.  Indeed, if we let $\irs(G)$ and $\irs_{\sof}(G)$ denote the sets of invariant random subgroups and co-sofic invariant random subgroups of $G$ respectively, then $\irs_{\sof}(G)$ is a subset of $\irs(G)$ and both are closed, convex subsets of the space of probability measures on $\sub(G)$, whereas $C_{qa}(k,n)$ is a subset of $C_{qc}(k,n)$ and both are closed, convex subserts of $[0,1]^{k^2n^2}$.  In the case of Tsirelson's problem, the value of a certain nonlocal game was used to distinguish the closed convex set $C_{qc}(k,n)$ from its closed, convex subset $C_{qa}(k,n)$.  The main idea of the papers \cite{Bowen1} (written by Bowen, Chapman, Lubotzky, and Vidick) and \cite{Bowen2} (written by Bowen, Chapman, and Vidick) is to do something similar in the current context, where the distinguishing linear functional is now defined in terms of a so-called \textbf{subgroup test}.

Throughout, we set $\bb F:=\bb F_S$ for some finite set $S$.  By a \textbf{challenge} we mean a pair $(K,D)$, where $K\subseteq \bb F$ is finite and $D:2^K\to \{0,1\}$ is a function, which determines a continuous function $H\mapsto D(H\cap K):\sub(\bb F)\to \{0,1\}$.  One should view the subgroup $H$ of $\bb F$ as \emph{passing} the challenge if $D(H\cap K)=1$.  A \textbf{subgroup test} is a pair $\cal T:=((K_i,D_i)_{i\in I},\mu)$ consisting of a set of challenges indexed by a finite set $I$ together with a probability distribution on $I$.  The expected value that a given subgroup $H$ of $\bb F$ \emph{passes the test} is given by $\bb E_{i\sim \mu}D_i(H\cap K_i)$.  Any random subgroup $\pi$ of $\bb F$ can be viewed as a \textbf{strategy} for the test $\cal T$, yielding the \textbf{value} $$\val(\cal T,\pi):=\bb E_{H\sim \pi}\bb E_{i\sim \mu} D_i(H\cap K_i)=\int \sum_{i\in I}\mu(i)D_i(H\cap K_i)d\pi(H).$$  Just as one can consider the quantum and quantum commuting values of a nonlocal game, obtained by taking the supremum over quantum and quantum commuting strategies respectively, one can also consider the values of a subgroup test obtained by taking the supremum over all co-sofic IRS strategies and all IRS strategies respectively.  Indeed, define the \textbf{ergodic value}\footnote{The justification behind the terminology is that the supremum must occur at an extreme point of the space of IRSs; such extreme IRSs are usually called \textbf{ergodic} IRSs.} of the test $\cal T$ to be $$\val_{\erg}(\cal T):=\sup\{\val(\cal T,\pi) \ : \ \pi\in \irs(\bb F)\}.$$  One can also restrict to co-sofic strategies, yielding the a priori smaller \textbf{sofic value} $$\val_{\sof}(\cal T):=\sup\{\val(\cal T,\pi) \ : \ \pi\in \irs_{\sof}(\bb F)\}$$ of the test $\cal T$.  Note that, in the definition of the sofic value of a test, one can restrict to the weak* dense subset of finitely described strategies.  Of course, if the algebraic special case of the Aldous-Lyons conjecture is true, then $\val_{\sof}(\cal T)=\val_{\erg}(\cal T)$ for every subgroup test $\cal T$.

The following result is identical to its quantum strategy for nonlocal games counterpart:

\begin{thm}[\cite{Bowen1}, Main Theorem 1]
For any subgroup test $\cal T$, $\val_{\sof}(\cal T)$ is left-c.e. uniformly in $\cal T$ while $\val_{\erg}(\cal T)$ is right-c.e. uniformly in $\cal T$.  Consequently, if the Aldous-Lyons conjecture has a positive answer, then this common value is computable, uniformly in $\cal T$.
\end{thm}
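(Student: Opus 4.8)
The plan is to establish the two computability claims separately, in direct analogy with the treatment of $\val^{*,s}$ and $\val^{co,s}$ in Section~2: a brute-force enumeration over finite strategies will give the left-c.e.\ bound on $\val_{\sof}$, and a linear-programming relaxation hierarchy will give the right-c.e.\ bound on $\val_{\erg}$, after which the final statement is immediate.  For the left-c.e.\ bound, recall that in the supremum defining $\val_{\sof}(\cal T)$ one may restrict to the weak* dense set of finitely described strategies, and that $\pi\mapsto\val(\cal T,\pi)$ is weak* continuous since $H\mapsto\sum_{i}\mu(i)D_i(H\cap K_i)$ is locally constant on $\sub(\bb F)$.  A finitely described strategy is determined by an action of $\bb F=\bb F_S$ on a finite set, that is, by a tuple of permutations indexed by $S$; such data can be effectively enumerated, and (with the usual convention that $\mu$ is rational) for each one the value $\val(\cal T,\pi)$ is a rational number computable from $\cal T$: decide which words of each $K_i$ fix which points of the finite set, apply the $D_i$, and average.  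Taking the running maximum over this enumeration yields a computable nondecreasing sequence of rationals converging to $\val_{\sof}(\cal T)$, uniformly in $\cal T$.

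The right-c.e.\ bound on $\val_{\erg}(\cal T)$ is the crux, with the natural tool a linear-programming hierarchy playing the role that semidefinite programming plays for $\val^{co}$.  Encode an IRS $\pi$ of $\bb F$ by its pattern frequencies $q_K(L):=\pi(\{H\in\sub(\bb F):H\cap K=L\})$ for finite $K\subseteq\bb F$ and $L\subseteq K$.  By the Kolmogorov extension theorem, a family $(q_K(L))$ arises from a Borel probability measure on $2^{\bb F}$ exactly when it satisfies the (linear) nonnegativity, normalization, and marginal-consistency constraints; the measure is supported on $\sub(\bb F)$ exactly when, in addition, $q_K(L)=0$ whenever $L$ violates one of the finitely many ``trace of a subgroup on $K$'' conditions testable within $K$ (that $e\in L$, and closure under the products and inverses that land back in $K$); and it is invariant exactly when $q_K(L)=q_{g^{-1}Kg}(g^{-1}Lg)$ for every $g\in\bb F$, another family of linear equations among coordinates.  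Hence $\irs(\bb F)$ is a closed convex subset of the compact space of pattern-frequency vectors, cut out by countably many linear conditions each depending on only finitely many coordinates, while $\val(\cal T,\pi)=\sum_i\mu(i)\sum_{L\subseteq K_i}D_i(L)\,q_{K_i}(L)$ is a fixed rational linear functional of these coordinates.  For each radius $r$ large enough that every $K_i\subseteq B_r(\bb F)$, let $v_r(\cal T)$ be the maximum of this functional over the nonempty rational polytope $P_r$ obtained by retaining only the coordinates $q_K(L)$ with $K\subseteq B_r(\bb F)$ and only those defining constraints entirely supported inside $B_r(\bb F)$.  Then $v_r(\cal T)$ is a rational computable from $(\cal T,r)$, being the optimum of a rational linear program, and $v_0\ge v_1\ge\cdots\ge\val_{\erg}(\cal T)$ since every genuine IRS restricts to a point of $P_r$.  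The remaining point is that $v_r(\cal T)\to\val_{\erg}(\cal T)$: choosing near-optimal $x_r\in P_r$ and passing to a subsequential limit $x_\infty$ in the compact coordinate space, $x_\infty$ lies in every $P_r$ (each $P_r$ is closed and contains all $x_{r'}$ with $r'\ge r$), hence assembles to a genuine IRS, and by continuity of the objective it attains value $\lim_r v_r(\cal T)$, forcing $\lim_r v_r(\cal T)\le\val_{\erg}(\cal T)$ and therefore equality.  The step I expect to be most delicate is pinning down the precise local linear description of $\irs(\bb F)$ in pattern-frequency coordinates so that $\bigcap_r P_r$ really equals (the coordinate image of) $\irs(\bb F)$ and the limiting IRS genuinely exists; the compactness argument and the computability of rational linear programs are then routine.

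Finally, if the Aldous--Lyons conjecture holds then, as noted earlier, every IRS of $\bb F_S$ is co-sofic for every finite $S$, so $\irs_{\sof}(\bb F)=\irs(\bb F)$ and hence $\val_{\sof}(\cal T)=\val_{\erg}(\cal T)$ for every subgroup test $\cal T$.  This common value is then simultaneously left-c.e.\ and right-c.e.\ uniformly in $\cal T$, so running the nondecreasing lower approximations and the nonincreasing upper approximations in parallel until they agree to within a prescribed tolerance produces rational approximations to it of arbitrary precision, uniformly in $\cal T$; that is, it is computable uniformly in $\cal T$.
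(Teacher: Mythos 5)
Your proof is correct and matches the approach the paper indicates in the remark immediately following the theorem statement (the paper itself only cites \cite{Bowen1} and does not reproduce the proof): a brute-force enumeration over finitely described permutation-action strategies gives the left-c.e.\ bound on $\val_{\sof}$, and a linear-programming relaxation hierarchy in pattern-frequency (cylinder-probability) coordinates gives the right-c.e.\ bound on $\val_{\erg}$. The compactness/Kolmogorov argument you give to show $v_r(\cal T)\downarrow\val_{\erg}(\cal T)$ is sound, since membership in $\irs(\bb F)$ is indeed characterized by the countable family of local linear constraints you list.
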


As in the setting of quantum strategies, the left-c.e. result in the previous theorem follows from a brute force search while the right-c.e. result uses semidefinite programming.\footnote{In fact, the right-c.e. result here uses the even simpler theory of linear programming.}

The quest now becomes to show that $\val_{\sof}(\cal T)$ is not computable uniformly in $\cal T$.  The rough idea is to reduce the undecidability of the quantum values of nonlocal games to the sofic values of subgroup tests.  More precisely, to a certain kind of nonlocal game $\cal G$ known as a \textbf{tailored} nonlocal game, the authors effectively associate a subgroup test $\cal T(\cal G)$ in such a way so that:
\begin{enumerate}
    \item If $\cal G$ has a perfect \textbf{$Z$-aligned permutation strategy that commutes along edges}, then $\cal T(\cal G)$ has a perfect finitely described strategy.
    \item If $\cal T(\cal G)$ has an \emph{almost} perfect finitely described strategy, then $\cal G$ has an \emph{almost} perfect quantum strategy.
\end{enumerate}

Here, a perfect $Z$-aligned permutation strategy that commutes along edges is a particular kind of quantum strategy whose definition can be found in \cite[Definitions 6.11 and 6.24]{Bowen1}. In \cite{Bowen2}, the authors prove a version of Theorem \ref{mip} which associates to each Turing machine $\cal M$ a \emph{tailored} nonlocal game $\cal G_M$ so that, in case $\cal M$ halts, the perfect strategy for $\cal G_M$ can be taken to be a $Z$-aligned permutation strategy that commutes along edges (while still having quantum value at most $\frac{1}{2}$ in case $M$ does not halt).

Combining these two results, we see that if $\cal M$ halts, then $\val_{\sof}(\cal T(\cal G_M))=1$.  On the other hand, if $\cal M$ does not halt, then $\val_{\sof}(\cal T(\cal G_M))\leq 1-\lambda(M)$, where $\lambda(M)\in (0,1)$ is some constant that depends computably on the description of $M$.  Consequently, if the Aldous-Lyons conjecture were true and thus the sofic value of a subgroup test was computable uniformly in the description of the test, one could decide the halting problem, yielding a contradiction.

\section{Open questions}

In this section, we collect a number of open questions related to the themes of this paper.

\begin{question}
Is first-order arithmetic interpretable in the theory of the hyperfinite II$_1$ factor $\R$?  Even more specifically, is the universal theory of the integers interpretable in the universal theory of $\R$?
\end{question}

A positive answer to the previous question might allow one to show that the universal theory of $\R$ is not computable without having to invoke $\mre$.

\begin{question}
Is $\val^{co}(\frak G)$ computable uniformly in $\frak G$?
\end{question}

In \cite{MIP}, the authors conjecture that $\operatorname{MIP}^{co}=\operatorname{coRE}$, where $\operatorname{MIP}^{co}$ is defined exactly as $\operatorname{MIP}^*$ but using the quantum commuting value of a game rather than the quantum value and $\operatorname{coRE}$ denotes those languages whose complement is in $\operatorname{RE}$.  The inclusion $\operatorname{MIP}^{co}\subseteq \operatorname{coRE}$ follows from the fact that $\val^{co}(\frak G)$ is right r.e. uniform in $\frak G$.

As mentioned in Subsection \ref{nuclearsection}, the \cstar-algebra version of CEP suitable for stably finite algebras is the MF problem, whose negative resolution follows from the negative resolution of CEP.  But what about a version of CEP suitable for \emph{all} \cstar-algebras?  Kirchberg proposed such a statement, which is now referred to as the \textbf{Kirchberg embedding problem} (KEP):

\begin{question}[KEP]
Does every \cstar-algebra embed into an ultrapower of the \textbf{Cuntz algebra} $\cal O_2$?
\end{question}

The Cuntz $\cal O_2$ algebra is the universal \cstar-algebra generated by two proper isometries, where an isometry in a unital \cstar-algebra $A$ is an element $s\in A$ which satisfies $s^*s=1$; an isometry is proper if it is not a unitary, that is, if $ss^*\not=1$.  The Cuntz algebra plays a central role in the classification theory for \cstar-algebras.  For example, by a famous theorem of Kirchberg, every separable nuclear \cstar-algebra embeds into $\cal O_2$ (with expectation).

A thorough model-theoretic study of the KEP was undertaken in \cite{KEP}, where a number of interesting reformulations were given.  Some computability-theoretic consequences of the KEP were given in \cite{FoxGoldHart}.

The following question was also raised in \cite{tsirelson}:

\begin{question}
Does TP imply QWEP?
\end{question}

While the last question is rather vague, given the contents of the current paper, a positive answer seems rather reasonable:

\begin{question}
   Determine if there is any model theoretic content to the resolution of the Aldous-Lyons conjecture described in Section 6.
\end{question}

\end{document}